\newtheorem{theorem}{Theorem}[section]
\newtheorem{proposition}[theorem]{Proposition} 
\newtheorem{lemma}[theorem]{Lemma}
\newtheorem{corollary}[theorem]{Corollary} 
\theoremstyle{definition}
\newtheorem{definition}[theorem]{Definition}
\newtheorem{remark}[theorem]{Remark} 
\newtheorem{example}[theorem]{Example}
\newcommand{\CC}{{\mathbb C}}
\newcommand{\HH}{{\mathbb H}}
\newcommand{\LL}{{\mathbb L}}
\newcommand{\PP}{{\mathbb P}}
\newcommand{\QQ}{{\mathbb Q}}
\newcommand{\RR}{{\mathbb R}}
\newcommand{\TT}{{\mathbb T}}
\newcommand{\ZZ}{{\mathbb Z}}
\newcommand{\cF}{{\mathcal F}}
\newcommand{\cL}{{\mathcal L}}
\newcommand{\cN}{{\mathcal N}}
\newcommand{\cP}{{\mathcal P}}
\newcommand{\cU}{{\mathcal U}}
\newcommand{\cV}{{\mathcal V}}
\newcommand{\cW}{{\mathcal W}}
\newcommand{\fg}{{\mathfrak g}}
\newcommand{\e}{\varepsilon}
\begin{document}

\vspace*{7ex}

\title{Symplectic Origami}

\author{A. Cannas da Silva, V. Guillemin, and A. R. Pires}

\address{Department of Mathematics, Princeton University,
Princeton, NJ 08544-1000, USA and Departamento de Matem\'atica,
Instituto Superior T\'ecnico, 1049-001 Lisboa, Portugal}
\email{acannas@math.princeton.edu}

\address{Department of Mathematics, Massachussets Institute of
Technology, 77 Massachussets Avenue, Cambridge, MA 02139-4307, USA}
\email{vwg@math.mit.edu}

\address{Department of Mathematics, Massachussets Institute of
Technology, 77 Massachussets Avenue, Cambridge, MA 02139-4307, USA}
\email{arita@math.mit.edu}

\date{Revised November 15, 2010.
To appear in {\em Int.\ Math.\ Res.\ Not.}\\
First published online December 2, 2010.
DOI: 10.1093/imrn/rnq241}

\maketitle

\begin{abstract}
An origami manifold is a manifold equipped with a closed 2-form
which is symplectic except on a hypersurface where it is like
the pullback of a symplectic form by a folding map and
its kernel fibrates with oriented circle fibers over a compact base.
We can move back and forth between origami
and symplectic manifolds using cutting (unfolding)
and radial blow-up (folding), modulo compatibility conditions.
We prove an origami convexity theorem for hamiltonian torus
actions, classify toric origami manifolds by polyhedral objects
resembling paper origami and discuss examples.
We also prove a cobordism result and compute the cohomology
of a special class of origami manifolds.
\end{abstract}

\vspace*{3ex}


\section{Introduction}

This is the third in a series of papers on \emph{folded} symplectic manifolds.
The first of these papers~\cite{ca-gu-wo:unfolding} contains a description
of the basic local and semi-global features of these manifolds and
of the \emph{folding} and \emph{unfolding} operations;
in the second~\cite{ca:ffff} it is shown that a manifold is folded
symplectic if and only if it is stable complex
and, in particular, that every oriented 4-manifold is folded
symplectic. (Other recent papers on the topology of
folded symplectic manifolds are~\cite{baykur} and~\cite{bergmann}.)

In this third paper we take up the theme of hamiltonian group actions
on folded symplectic manifolds.
We focus on a special class of folded symplectic manifolds which we call
\emph{origami manifolds}. (Jean-Claude Hausmann pointed out to us
that the term ``origami'' had once been proposed for another
class of spaces: what are now known as orbifolds.)
For the purposes of this introduction, let us say that a
folded symplectic manifold is a triple $(M,Z,\omega)$
where $M$ is an oriented $2n$-dimensional manifold,
$\omega$ a closed 2-form and $Z\stackrel{i}{\hookrightarrow} M$ a hypersurface.
``Folded symplectic'' requires that $\omega$ be symplectic
on $M \smallsetminus Z$ and that the restriction of $\omega$ to $Z$
be odd-symplectic, i.e.
\[
   (i^*\omega)^{n-1}\neq0 \ .
\]
From this one gets on $Z$ a \emph{null foliation}
by lines and $(M,Z,\omega)$ is ``origami''
if this foliation is fibrating with compact connected oriented fibers.
In this case one can \emph{unfold} $M$ by taking the
closures of the connected components of $M\smallsetminus Z$
and identifying boundary points on the same leaf of the null foliation.
We will prove that this unfolding defines a cobordism between
(a compact) $M$ and a disjoint union of (compact) symplectic manifolds $M_i$:
\begin{equation}\label{cob}
   M\sim\displaystyle{\bigsqcup_i M_i} \ .
\end{equation}
Moreover, if $M$ is a hamiltonian $G$-manifold we will prove
that the $M_i$'s are as well.
The \emph{origami} results of this paper involve reconstructing
the moment data of $M$ (and in the toric case $M$ itself)
from the moment data of the $M_i$'s.

Precise definitions of ``folded symplectic'' and ``origami''
are given in Section~\ref{ss:origami}.
In~\ref{ss:cutting} we describe in detail the unfolding
operation (\ref{cob})
and in~\ref{ss:blowup} how one can refold the terms
on the right to reconstruct $M$ via a radial blow-up operation.
Then in Sections~\ref{ss:cutting_blowup} and~\ref{ss:blowup_cut}
we prove that folding and unfolding are inverse operations:
unfolding followed by folding gives one the manifold
one started with and vice-versa.

We turn in Section~\ref{sec:polytopes} to the main theme of this paper:
torus actions on origami manifolds.
In~\ref{sec:convexity} we define for such actions an origami version
of the notion of moment polytope, which turns out to be a
collection of convex polytopes with compatibility conditions,
or {\em folding instructions} on facets.
We then concentrate in Section~\ref{ss:toric} on the toric case
and prove in~\ref{ss:classification}
an origami version of the Delzant theorem.
More explicitly, we show that toric origami manifolds are classified
by \emph{origami templates}: pairs $(\cP,\cF)$,
where $\cP$ is a finite collection of oriented $n$-dimensional Delzant polytopes
and $\cF$ a collection of pairs of facets of these polytopes
satisfying:
\begin{itemize}
\item[(a)]
for each pair of facets $\left\{F_1,F_2\right\} \in \cF$ the corresponding
polytopes in $\cP$ have opposite orientations and are identical in a 
neighborhood of these facets;
\item[(b)]
if a facet occurs in a pair,
then neither itself nor any of its neighboring facets occur in any other pair;
\item[(c)]
the topological space constructed from the disjoint union
of all the $\Delta_i\in\cP$ by identifying facet pairs in $\cF$
is connected.
\end{itemize}

Without the assumption that $M$ be origami, i.e.,
that the null foliation be fibrating,
it is \emph{not} possible to classify
hamiltonian torus actions on folded symplectic manifolds by a finite set
of combinatorial data;
why not is illustrated by example~\ref{exotic}.
Nonetheless, Chris Lee has shown
that a (more intricate) classification of these objects
by moment data \emph{is} possible at least in dimension four~\cite{lee:thesis}.
This result of his we found very helpful in putting our own results
into perspective.

Throughout this introduction we have been assuming that our
ori\-gami manifolds are oriented. However, all the definitions and results 
extend to the case of nonorientable origami manifolds, and that is how 
they will be presented in this paper. In particular, the notion of origami 
template explained above becomes that of Definition~\ref{def:template}, 
which drops the orientations of the polytopes in $\cP$ and allows for sets 
of single facets in $\cF$.
Moreover, as we show in Section~\ref{sec:polytopes}, some of the most 
curious examples of origami manifolds (such as $\RR\PP^{2n}$ and the 
Klein bottle) are nonorientable.

The final two sections of this paper contain results that hold only 
for oriented origami manifolds.

In Section~\ref{cobordism} we prove 
that (\ref{cob}) is a cobordism
and, in fact, an equivariant cobordism in presence of group actions.
We show that this cobordism is a \emph{symplectic cobordism},
i.e., there exists a closed two form on the cobording manifold
whose restriction to $M$ is the folded symplectic form on $M$
and on the symplectic cut pieces is the symplectic form on those manifolds.
Moreover, in the presence of a (hamiltonian) compact group action,
this cobordism is a (hamiltonian) equivariant cobordism.
Using these results and keeping track of stable almost complex structures,
one can give in the spirit of~\cite{gu-gi-ka:cobordisms}
a proof that the equivariant spin-$\CC$ quantization of $M$
is, as a virtual vector space (and in the presence of group actions
as a virtual representation), equal to the spin-$\CC$ quantizations
of its symplectic cut pieces.
However, we will not do so here.
We refer the reader instead to the proof of this result in
Section~8 of~\cite{ca-gu-wo:unfolding},
which is essentially a cobordism proof of this type.

Section~\ref{sec:cohomology} is devoted to the origami version
of a theorem in the standard theory of hamiltonian actions:
In it we compute the cohomology groups of an oriented toric origami manifold,
under the assumption that the folding hypersurface be connected.

Origami manifolds and higher codimension analogues arise naturally
when converting hamiltonian torus actions on symplectic manifolds into
free actions by generalizations of \textit{radial blow-up}
along orbit-type strata.
We intend to pursue this direction to obtain free hamiltonian torus
actions on compact presymplectic manifolds, complementing
recent work by Yael Karshon and Eugene Lerman on non-compact
symplectic toric manifolds~\cite{ka-le:noncompact}.


\section{Origami Manifolds}
\label{sec:origami}

\subsection{Folded symplectic and origami forms}
\label{ss:origami}

\begin{definition}
A \textit{folded symplectic form} on a $2n$-dimensional manifold $M$
is a closed 2-form $\omega$ whose top power $\omega^n$ vanishes
transversally on a submanifold $Z$, called the
\textit{folding hypersurface} or \textit{fold},
and whose restriction to that submanifold has maximal rank.
The pair $(M,\omega)$ is then called a
\textit{folded symplectic manifold}.
\end{definition}

By transversality, the folding hypersurface $Z$ of a folded symplectic manifold
is indeed of codimension one and embedded.
An analogue of Darboux's theorem for folded symplectic
forms~\cite{ca-gu-wo:unfolding,ma:formes}
says that near any point $p \in Z$ there is a coordinate chart
centered at $p$ where the form $\omega$ is
\[
   x_1 dx_1 \wedge dy_1 + dx_2 \wedge dy_2 + \ldots + dx_n \wedge dy_n \ .
\]

Let $(M,\omega)$ be a $2n$-dimensional folded symplectic manifold.
Let $i : Z \hookrightarrow M$ be the inclusion of the
folding hypersurface $Z$.
Away from $Z$, the form $\omega$ is nondegenerate,
so $\left. \omega^n \right|_{M \smallsetminus Z} \neq 0$.
The induced restriction $i ^* \omega$ has a one-dimensional kernel at
each point: the line field $V$ on $Z$, called the
\textit{null foliation}.
Note that $V = TZ \cap E \subset i^* TM$ where $E$ is the rank 2
bundle over $Z$ whose fiber at each point is the kernel of $\omega$.

\begin{remark}
When a folded symplectic manifold $(M,\omega)$ is an oriented
manifold, the complement $M \smallsetminus Z$ decomposes into
open subsets $M^+$ where $\omega^n > 0$ and $M^-$ where $\omega^n < 0$.
This induces a coorientation on $Z$ and hence an orientation on $Z$.
From the form $\left( i^* \omega \right)^{n-1}$ we obtain
an orientation of the quotient bundle $\left( i^* TM \right)/E$
and hence an orientation of $E$.
From the orientations of $TZ$ and of $E$, we obtain an orientation
of their intersection, the null foliation $V$.
\end{remark}

We concentrate on the case of fibrating null foliation.

\begin{definition}
An \textit{origami manifold} is a folded
symplectic manifold $(M, \omega)$ whose null foliation
is fibrating with oriented circle fibers, $\pi$,
over a compact \textit{base}, $B$. (It would be natural
to extend this definition admitting {\em Seifert fibrations}
and {\em orbifold bases}.)
\[
\begin{array}{l}
   Z \\
   \downarrow \pi \\
   B
\end{array}
\]
The form $\omega$ is called an \textit{origami form}
and the null foliation, i.e., the vertical bundle of $\pi$,
is called the \textit{null fibration}.
\end{definition}

\begin{remark}
When an origami manifold is oriented we assume that
any chosen orientation of the null fibration or any principal
$S^1$-action matches the induced orientation of the null foliation $V$.
By definition, a nonorientable origami manifold still has
an orientable null foliation.
\end{remark}

Notice that, on an origami manifold,
the base $B$ is naturally symplectic:
As in symplectic reduction, there is a unique
symplectic form $\omega_B$ on $B$ satisfying
\[
   i^* \omega = \pi^* \omega_B \ .
\]

Notice also that the fold $Z$ is necessarily compact
since it is the total space of a circle fibration with compact base.
We can choose different principal $S^1$-actions on $Z$ by choosing
nonvanishing (positive) vertical vector fields with periods $2 \pi$. 

\begin{example}
\label{ex:spheres}
Consider the unit sphere $S^{2n}$
in euclidean space $\RR^{2n+1} \simeq \CC^{n} \times \RR$
with coordinates $x_1,y_1,\ldots,x_n,y_n,h$.
Let $\omega_0$ be the restriction to $S^{2n}$ of
$dx_1 \wedge dy_1 + \ldots + dx_n \wedge dy_n
= r_1 dr_1 \wedge d\theta_1 + \ldots + r_n dr_n \wedge d\theta_n$.
Then $\omega_0$ is a folded symplectic form.
The folding hypersurface is the equator sphere given by the
intersection with the plane $h=0$.
The null foliation is the Hopf foliation since
\[
   \imath_{_{\frac{\partial}{\partial \theta_1}+ \ldots
    +\frac{\partial}{\partial \theta_n}}}
   \omega_0 = - r_1 dr_1 - \ldots - r_n dr_n
\]
vanishes on $Z$, hence a null fibration is
$S^1 \hookrightarrow S^{2n-1} \twoheadrightarrow \CC \PP^{n-1}$.
Thus, $(S^{2n},\omega_0)$ is an orientable origami manifold.
\end{example}

\begin{example}
\label{ex:nonorientable}
The standard folded symplectic form $\omega_0$
on $\RR \PP^{2n} = S^{2n}/\ZZ_2$
is induced by the restriction to $S^{2n}$ of the $\ZZ_2$-invariant
form $dx_1 \wedge dx_2 + \ldots + dx_{2n-1} \wedge dx_{2n}$
in $\RR^{2n+1}$~\cite{ca-gu-wo:unfolding}.
The folding hypersurface is
$\RR \PP^{2n-1} \simeq \{ [x_1,\ldots,x_{2n},0] \}$,
a null fibration is the $\ZZ_2$-quotient of the Hopf fibration
$S^1 \hookrightarrow \RR \PP^{2n-1} \twoheadrightarrow \CC \PP^{n-1}$,
and $(\RR \PP^{2n},\omega_0)$ is a nonorientable origami manifold.
\end{example}

The following definition regards \textit{symplectomorphism}
in the sense of \textit{presymplectomorphism}.

\begin{definition}
Two (oriented) origami manifolds
$(M,\omega)$ and $(M',\omega')$
are \textit{symplectomorphic} if there is a (orientation-preserving)
diffeomorphism $\rho : M \to \widetilde{M}$
such that $\rho^* \widetilde{\omega} = \omega$.
\end{definition}

This notion of equivalence between origami manifolds
stresses the importance of the null foliation being fibrating,
and not a particular choice of principal circle fibration.
We might sometimes identify symplectomorphic origami manifolds.

\subsection{Cutting}
\label{ss:cutting}

The folding hypersurface $Z$ plays the role of an
{\em exceptional divisor} as it can be {\em blown-down}
to obtain honest symplectic pieces.
(Origami manifolds may hence
be interpreted as {\em birationally symplectic manifolds}.
However, in algebraic geometry
the designation {\em birational symplectic manifolds} was
used by Huybrechts~\cite{hu:birational} in a different context,
that of birational equivalence for complex manifolds
equipped with a holomorphic nondegenerate two-form.)
This process, called \textit{cutting}
(or {\em blowing-down} or {\em unfolding}),
is essentially symplectic cutting and
was described in~\cite[Theorem 7]{ca-gu-wo:unfolding}
in the orientable case.

\begin{example}
\label{ex:cutting_spheres}
Cutting the origami manifold
$(S^{2n}, \omega_0)$ from Example~\ref{ex:spheres}
produces $\CC \PP^n$ and $\overline{\CC \PP^n}$
each equipped with the same multiple of the Fubini-Study form
with total volume equal to that of an original hemisphere, $n! (2\pi)^n$.
\end{example}

\begin{example}
\label{ex:cutting_nonorientable}
Cutting the origami manifold
$(\RR \PP^{2n}, \omega_0)$ from Example~\ref{ex:nonorientable}
produces a single copy of $\CC \PP^n$.
\end{example}

\begin{proposition}
\label{pro:cutting}
\cite{ca-gu-wo:unfolding}
Let $(M^{2n},\omega)$ be an oriented origami manifold.

Then the unions
$M^+ \sqcup B$ and $M^- \sqcup B$ each admits a structure of
$2n$-dimensional symplectic manifold, denoted $(M^+_0,\omega^+_0)$
and $(M^-_0,\omega^-_0)$ respectively, with $\omega^+_0$ and
$\omega^-_0$ restricting to $\omega$ on $M^+$ and $M^-$
and with a natural embedding of $(B, \omega_B)$ as a symplectic
submanifold with radially projectivized normal bundle isomorphic to
the null fibration $Z \stackrel{\pi}{\to} B$.

The orientation induced from the original orientation on $M$
matches the symplectic orientation on $M^+_0$ and is opposite
to the symplectic orientation on $M^-_0$.
\end{proposition}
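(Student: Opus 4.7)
The plan is to realize the passage from $M$ to $M^\pm_0$ as Lerman-style symplectic cutting, using the principal $S^1$-action on $Z$ as the generator of a local Hamiltonian circle action on a tubular neighborhood of the fold.

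The folded Darboux theorem recalled above gives, near any point of $Z$, coordinates in which $\omega = x_1\, dx_1 \wedge dy_1 + \sum_{i\geq 2} dx_i \wedge dy_i$. The substitution $t = x_1^2/2$ turns this into the honest symplectic form $dt \wedge dy_1 + \sum_{i\geq 2} dx_i \wedge dy_i$ on the half-space $\{t \geq 0\}$, with $-\partial/\partial y_1$ Hamiltonian with moment map $t$. The first task is to globalize: using an equivariant tubular neighborhood, extend the null $S^1$-action on $Z$ to a free action on an invariant neighborhood $U$ of $Z$ in $M$, and produce a smooth function $t:U\to \RR_{\geq 0}$ vanishing on $Z$ which serves as an $\omega$-moment map for this action. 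The two pieces $U\cap M^\pm$ then sit as collars $Z\times (0,\e)$ foliated by $S^1$-orbits; the origami hypothesis (fibrating null foliation) is used precisely here to make the circle action global. This normal form is the content of Theorem~7 of~\cite{ca-gu-wo:unfolding}.

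With the Hamiltonian model in hand, symplectic cutting on the $M^+$ side is standard: form $M^+ \times \CC$ with diagonal $S^1$-action (rotating $\CC$ by $e^{-i\theta}$) and moment map $\Phi(p,z) = t(p) - \tfrac12|z|^2$, extending $t$ by $+\infty$ outside $U$ in the usual way. The reduced space $\Phi^{-1}(0)/S^1$ is set-theoretically $M^+\sqcup B$: on $\{t(p)>0\}$ the $S^1$-section $z=\sqrt{2t(p)}$ identifies the reduction with $M^+$, while on $Z\times\{0\}$ the $S^1$-quotient is $B$. The resulting symplectic form $\omega^+_0$ restricts to $\omega$ on $M^+$ and to $\omega_B$ on $B$. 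The mirror construction on $M^-$ yields $(M^-_0,\omega^-_0)$.

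Finally, in symplectic cutting the normal bundle of the reduced submanifold $B$ is the associated complex line bundle $Z\times_{S^1}\CC \to B$, whose unit circle subbundle is exactly $Z\to B$ --- this is the sense in which the projectivised normal bundle of $B\subset M^\pm_0$ is $Z\to B$. The orientation comparison is then immediate: $\omega^+_0=\omega$ on $M^+$, where $\omega^n>0$ matches the ambient orientation, while $\omega^-_0=\omega$ on $M^-$, where $\omega^n<0$ is opposite to it. The only nontrivial technical point is the globalization in the second paragraph; after that, the argument reduces to routine properties of symplectic cutting and reduction.
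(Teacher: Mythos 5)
Your overall strategy — equivariant Moser normal form near the fold, then Lerman symplectic cutting — is exactly the one the paper uses, and you correctly identify the projectivised normal bundle and the orientation comparison. But there is a genuine technical gap in how you set up the Lerman cut. Any moment map $t$ for the circle action with respect to $\omega$ must vanish to \emph{second} order on $Z$ (because the generating vector field lies in $\ker i^*\omega$ there, so $dt = \iota_{X^\#}\omega = 0$ on $Z$); in your local Darboux model this is the statement that $t = x_1^2/2$ has $dt = x_1\,dx_1 = 0$ at $x_1 = 0$. Consequently $0$ is \emph{not} a regular value of $\Phi(p,z) = t(p) - \tfrac12|z|^2$ — at $(p,0)$ with $p\in Z$ one has $d\Phi = 0$ — so $\Phi^{-1}(0)$ fails to be a manifold there and the Lerman reduction does not go through as stated. (A secondary issue: $M^+\times\CC$ does not contain $Z\times\{0\}$ at all, so the $B$-component would not even appear.)

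The paper's proof fixes exactly this point. It uses the square-root reparametrization $\psi(x,s) = \varphi(x,\sqrt s)$ to transport $\omega|_{\cU^+}$ to a form $\nu = p^*i^*\omega + d(s\,p^*\alpha)$ on $Z\times(0,\varepsilon^2)$, observes that this extends by the same formula to a genuine symplectic form on $Z\times(-\varepsilon^2,\varepsilon^2)$, and then performs the cut on this model with the linear moment map $\mu(x,s,z) = s - \tfrac12|z|^2$, for which $0$ is manifestly regular. You describe the local substitution $t = x_1^2/2$ in the Darboux chart, so you clearly have the right idea in mind, but you must carry it out globally: perform the cut on the extended collar model $(Z\times(-\varepsilon^2,\varepsilon^2),\nu)$ (where $\nu$ extends smoothly across $s=0$ and the level is regular), and only then glue the rest of $M^+$ along $\cU^+$. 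With that correction, your argument becomes the paper's argument.
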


The proof relies on origami versions of Moser's trick
and of Lerman's cutting.
Lerman's cutting applies to a hamiltonian circle action,
defined as in the symplectic case:

\begin{definition}
The action of a Lie group $G$
on an origami manifold $(M,\omega)$ is \textit{hamiltonian} if it admits
a \textit{moment map}, $\mu : M \to \fg^*$, satisfying the conditions:
\begin{itemize}
\item
$\mu$ collects hamiltonian functions, i.e.,
$d \langle \mu , X \rangle = \imath_{X^{\#}} \omega$,
$\forall X \in \fg := \text{Lie} (G)$, where $X^{\#}$ is the
vector field generated by $X$;
\item
$\mu$ is equivariant with respect to the given action
of $G$ on $M$ and the coadjoint action of $G$ on
the dual vector space $\fg^*$.
\end{itemize}
$(M,\omega,G,\mu)$ denotes an origami manifold equipped with
a hamiltonian action of a Lie group $G$ having moment map $\mu$.
\end{definition}

Moser's trick needs to be adapted as in~\cite[Theorem 1]{ca-gu-wo:unfolding}. 
We start from a tubular neighborhood model defined as follows.

\begin{definition}\label{def:mosermodel}
A \textit{Moser model} for an oriented origami manifold $(M,\omega)$
with null fibration $Z \stackrel{\pi}\rightarrow B$ is a diffeomorphism
\[
   \varphi:Z\times(-\varepsilon,\varepsilon)\to\cU
\]
where $\varepsilon>0$ and $\cU$ is a tubular neighborhood of $Z$ 
such that $\varphi(x,0)=x$ for all $x\in Z$ and
\[
   \varphi^* \omega = p^* i^* \omega
   + d \left( t^2 p^* \alpha \right) \ ,
\]
with $p: Z \times (-\varepsilon,\varepsilon) \to Z$
the projection onto the first factor,
$i : Z \hookrightarrow M$ the inclusion,
$t$ the real coordinate on the interval $(-\varepsilon,\varepsilon)$
and $\alpha$ an $S^1$-connection form for a chosen
principal $S^1$-action along the null fibration.
\end{definition}

A choice of a principal $S^1$-action along the null fibration,
$S^1 \hookrightarrow Z \stackrel{\pi}{\to} B$, corresponds to
a vector field $v$ on $Z$ generating the principal $S^1$-bundle.
Following~\cite[Theorem 1]{ca-gu-wo:unfolding}, a Moser model can then
be found after choices of a connection form $\alpha$,
a small enough positive real number $\varepsilon$ and
a vector  field $w$ over a tubular neighborhood of $Z$ such that,
at each $x\in Z$,
the pair $(w_x,v_x)$ is an oriented basis of the kernel of $\omega_x$.
The orientation on this kernel is determined by the given orientation of $TM$
and the symplectic orientation of $TM$ modulo the kernel.
Conversely, a Moser model for an oriented origami manifold
gives a connection 1-form $\alpha$ by contracting 
$\varphi^* \omega$ with the vector field
$\frac{1}{2t} \frac{\partial}{\partial t}$
(and hence gives a vertical vector field $v$
such that $\imath_v \alpha = 1$ which generates an $S^1$-action),
an $\varepsilon$ from the width of the symmetric real interval
and a vector field $w=\varphi_*\left(\frac{\partial}{\partial t}\right)$.

\begin{lemma}
Any two Moser models
$\varphi_i:Z\times(-\varepsilon_i,\varepsilon_i)\to\cU_i$, with $i=0,1$, 
admit isotopic restrictions to $Z\times(-\varepsilon,\varepsilon)$
for $\varepsilon$ small enough, that is,
those restrictions can be smoothly connected by a family of Moser models.
\end{lemma}

\begin{proof}
Let $v_0$ and $v_1$ be the vector fields generating the $S^1$-actions
for models $\varphi_0$ and $\varphi_1$, and let $w_0$ and $w_1$
be the vector fields 
$w_i = (\varphi_i)_*\left(\frac{\partial}{\partial t}\right)$.
The vector fields $v_t = (1-t)v_0 + t v_1$ on $\cU_0\cap\cU_1$
correspond to a connecting family of $S^1$-actions all with the same orbits,
orientation and periods $2 \pi$.
Connect the vector fields $w_0$ and $w_1$ on
$\cU_0\cap\cU_1$ by a smooth family of vector fields $w_t$
forming oriented bases $(w_t,v_t)$ of $\ker\omega$ over $Z$.
Note that the $v_t$ are all positively proportional and
the set of all possible vector fields $w_t$ is contractible.
By compactness of $Z$, we can even take the convex combination
$w_t=(1-t)w_0+t w_1$, for $\varepsilon$ small enough.
Pick a smooth family of connections $\alpha_t$:
For instance, using a metric pick 1-forms $\beta_t$ such that
$\beta_t (v_t) =1$ and then average each $\beta_t$ by the $S^1$-action
generated by $v_t$.
For the claimed isotopy, use a corresponding family of Moser models
$\varphi_t:Z\times(-\varepsilon,\varepsilon)\to\cU_t$
with $\varepsilon$ sufficiently small so that integral curves of all $w_t$
starting at points of $Z$ are defined for $t\in(-\varepsilon,\varepsilon)$.
\end{proof}

With these preliminaries out of the way, we recall and expand
the proof from~\cite{ca-gu-wo:unfolding} for Proposition~\ref{pro:cutting}.

\begin{proof}
Choose a principal $S^1$-action along the null fibration,
$S^1 \hookrightarrow Z \stackrel{\pi}{\to} B$.
Let $\varphi : Z \times (-\varepsilon,\varepsilon) \to \cU$ be a Moser model,
and let $\cU^+$ denote $M^+ \cap \cU = \varphi (Z \times (0,\varepsilon))$.
The diffeomorphism
\[
   \psi : Z \times (0,\varepsilon^2) \to \cU^+ \ , \qquad
   \psi (x,s) = \varphi(x,\sqrt{s})
\]
induces a symplectic form
\[
    \psi^* \omega = p^* i^* \omega
   + d \left( s p^* \alpha \right) =: \nu
\]
on $Z \times (0,\varepsilon^2)$
that extends by the same formula to $Z \times (-\varepsilon^2,\varepsilon^2)$.

As in standard symplectic cutting~\cite{le:cuts}, form the product
$(Z \times (-\varepsilon^2,\varepsilon^2) , \nu) \times (\CC, -\omega_0)$
where $\omega_0 = \frac{i}{2} dz \wedge d \overline z$.
The product action of $S^1$ on
$Z \times (-\varepsilon^2,\varepsilon^2) \times \CC$ by
\[
   e^{i\theta} \cdot (x,s,z) = (e^{i\theta} \cdot x, s, e^{-i\theta} z)
\]
is hamiltonian and $\mu (x,s,z) = s - \frac{|z|^2}{2}$
is a moment map.
Zero is a regular value of $\mu$ and the corresponding level
is a codimension-one submanifold which decomposes as
\[
   \mu^{-1} (0) = Z \times \{ 0 \} \times \{ 0 \}
   \bigsqcup \{ (x,s,z) \mid s > 0 \ ,
   |z|^2 = 2 s \} \ .
\]
Since $S^1$ acts freely on
$\mu^{-1} (0)$, the quotient $\mu^{-1} (0) / S^1$ is a manifold and the
point-orbit map is a principal $S^1$-bundle.
Moreover, we can view it as
\[
   \mu^{-1} (0) / S^1 \simeq B \sqcup \cU^+ \ .
\]
Indeed, $B$ embeds as a codimension-two submanifold via
\[
   \begin{array}{rcrcl}
   j & : & B & \longrightarrow & \mu^{-1} (0) / S^1 \\
   & & \pi (x) & \longmapsto & [x,0,0] \, \qquad \mbox{ for } x \in Z
   \end{array}
\]
and $\cU^+$ embeds as an open dense submanifold via
\[
   \begin{array}{rcrcl}
   j^+ & : & \cU^+ & \longrightarrow & \mu^{-1} (0) / S^1 \\
   & & \psi (x,s) & \longmapsto & [x,s,\sqrt{2s}] \ .
   \end{array}
\]

The symplectic form $\Omega_{red}$ on $\mu^{-1} (0) / S^1$
obtained by symplectic reduction is such that the above embeddings
of $(B, \omega_B)$ and of $(\cU^+, \omega|_{\cU^+})$
are symplectic.

The normal bundle to $j(B)$ in  $\mu^{-1} (0) / S^1$
is the quotient over $S^1$-orbits (upstairs and downstairs) of the
normal bundle to $Z \times \{ 0 \} \times \{ 0 \}$ in $\mu^{-1} (0)$.
This latter bundle  is the product bundle
$Z \times \{ 0 \} \times \{ 0 \} \times \CC$ 
where the $S^1$-action is
\[
   e^{i\theta} \cdot (x,0,0,z) = (e^{i\theta} \cdot x, 0, 0, e^{-i\theta} z) \ .
\]
Performing $\RR^+$-projectivization and taking the $S^1$-quotient we
get the bundle $Z \to B$ with the isomorphism
\[
   \begin{array}{rcl}
   \big( Z \times \{ 0 \} \times \{ 0 \} \times \CC^* \big) / S^1
   \ni [x,0,0,r e^{i\theta}]
   & \quad \longmapsto \quad & e^{i\theta} x \in Z \\
   \downarrow \phantom{99} & & \phantom{9} \downarrow \\
   \big( Z \times \{ 0 \} \times \{ 0 \} \big) / S^1
   \ni [x,0,0]
   & \quad \longmapsto \quad & \pi (x) \in B \ .
   \end{array}
\]

By gluing the rest of $M^+$ along $\cU^+$, we produce
a $2n$-dimensional symplectic manifold $(M^+_0,\omega^+_0)$
with a symplectomorphism
$\overline{j^+} : M^+ \to M^+_0 \smallsetminus j(B)$ extending $j^+$.

For the other side, the map
$\psi_- : Z \times (0,\varepsilon^2) \to \cU^- := M^-\cap\cU$,
$(x,s) \mapsto \varphi(x,-\sqrt{s})$ reverses orientation and
$(\psi_-)^* \omega = \nu$.
The base $B$ embeds as a
symplectic submanifold of $\mu^{-1} (0) / S^1$ by the previous formula.
The embedding
\[
   \begin{array}{rcrcl}
   j^- & : & \cU^- & \longrightarrow &
   \mu^{-1} (0) / S^1 \\
   & & \psi_- (x,s) & \longmapsto & [x,s,- \sqrt{2s}]
   \end{array}
\]
is an orientation-reversing symplectomorphism.
By gluing the rest of $M^-$ along $\cU^-$, we produce
$(M^-_0,\omega^-_0)$ with a symplectomorphism
$\overline{j^-} : M^- \to M^-_0 \smallsetminus j(B)$ extending $j^-$.
\end{proof}

\begin{remark}
\label{rmk:involution}
The cutting construction in the previous proof produces a symplectomorphism
$\gamma$ between tubular neighborhoods
$\mu^{-1} (0) / S^1$ of the embeddings of $B$ in $M_0^+$ and $M_0^-$,
comprising $\cU^+ \to \cU^-$, $\varphi (x,t) \mapsto \varphi (x,-t)$,
and the identity map on $B$:
\[
\begin{array}{rccl}
   \gamma : & \mu^{-1} (0) / S^1 & \longrightarrow & \mu^{-1} (0) / S^1 \\
   & [x,s,\sqrt{2s}] & \longmapsto & [x,s,-\sqrt{2s}] \ .
\end{array}
\]
\end{remark}

\begin{definition}
Symplectic manifolds $(M^+_0,\omega^+_0)$ and $(M^-_0,\omega^-_0)$
obtained by cutting are called
\textit{symplectic cut pieces} of the oriented origami manifold $(M,\omega)$
and the embedded copies of $B$ are called \textit{centers}.
\end{definition}

The next proposition states that symplectic cut pieces
of an origami manifold are unique up to symplectomorphism.

\begin{proposition}
Different choices of a Moser model for a tubular
neighborhood of the fold in an origami manifold
yield symplectomorphic symplectic cut pieces.
\end{proposition}

\begin{proof}
Let $\varphi_0$ and $\varphi_1$ be two Moser models
for a tubular neighborhood $\cU$ of the fold $Z$ in
an origami manifold $(M,\omega)$.
Let $(M_0^\pm,\omega_0^\pm)$ and $(M_1^\pm,\omega_1^\pm)$
be the corresponding symplectic manifolds obtained by the above cutting.
Let $\varphi_t : Z \times (-\varepsilon,\varepsilon) \to \cU_t$
be an isotopy between (restrictions of) $\varphi_0$ and $\varphi_1$. 
By suitably rescaling $t$, we may assume that $\varphi_t$ is a technical
isotopy in the sense of~\cite[p.89]{br-ja:book},
i.e., $\varphi_t = \varphi_0$ for $t$ near $0$
and $\varphi_t = \varphi_1$ for $t$ near $1$.
Let
\[
   j_t^\pm : \cU_t^\pm \longrightarrow \mu^{-1}(0)/S^1
\]
be the corresponding isotopies of symplectic embeddings,
where $\mu^{-1}(0)/S^1$ is equipped with $\left( \Omega_{red} \right)_t$,
and let $(M_t^\pm,\omega_t^\pm)$ be the corresponding families
of symplectic manifolds obtained from gluing:
For instance, $(M_t^+,\omega_t^+)$ is the quotient of the disjoint union
\[
   M^+ \sqcup \mu^{-1}(0)/S^1
\]
by the equivalence relation which sets each point in $\cU^+$
equivalent to its image by the symplectomorphism $j_t^+$.

Let $\cU_c := \overline{\cU_{out}} \setminus \cU_{in}$
be a compact subset of each $\cU_t$
where $\cU_{in}$ and $\cU_{out}$ are tubular neighborhoods
of $Z$ with $\overline{\cU_{in}} \subset \cU_{out}$.
Let $\cU_*^+ = \cU_* \cap M^+$ where $*$ stands for the subscripts
$c$, $out$ or $in$.
Let $C$ be a compact neighborhood of
$j^+ \left( [0,1] \times \cU_c^+ \right) :=
\cup_{t \in [0,1]} j^+_t \left( \cU_c^+ \right)$ in $\mu^{-1}(0)/S^1$,
such that $B \cap C = \emptyset$.

By Theorem~10.9 in~\cite{br-ja:book}, there is a smooth
family of diffeomorphisms
\[
   H_t : \mu^{-1}(0)/S^1 \longrightarrow \mu^{-1}(0)/S^1 \ ,
   \qquad t \in [0,1] \ ,
\]
which hold fixed all points outside $C$
(in particular, the $H_t$ fix a neighborhood of $B$),
with $H_0$ the identity map and such that
\[
   j_t^+ |_{\cU_c^+} = H_t \circ j_0^+ |_{\cU_c^+} \ .
\]
The diffeomorphism $H_t$ restricted to $D := \overline{j_0^+ (\cU_{out}^+)}$
and the identity diffeomorphism on $M^+ \setminus \cU_{in}^+$
together define a diffeomorphism
\[
   \phi_t : M_0^+ \longrightarrow M_t^+ \ .
\]
All forms in the family $\phi_t^* \omega_t^+$ on $M_0^+$
are symplectic, have the same restriction to $B$ and are
equal to $\omega_0^+$ away from the set $D$ which retracts to $B$.
Hence, all $\phi_t^* \omega_t^+$ are in the same cohomology class and, moreover,
\[
   \frac{d}{dt} \phi_t^* \omega_t^+ = d \beta_t
\]
for some smooth family of 1-forms $\beta_t$ supported
in the compact set $D$~\cite[p.95]{mc-sa}.

By solving Moser's equation
\[
   \imath_{w_t} \omega_t^+ + \beta_t = 0
\]
we find a time-dependent vector field $w_t$ compactly supported on $D$.
The isotopy $\rho_t : M_0^+ \to M_0^+$, $t \in \RR$,
corresponding to this vector field satisfies $\rho_t \equiv \mbox{id}$
away from $D$ and
\[
   \rho_t^* \left( \phi_t^* \omega_t^+ \right) = \omega_0^+
   \qquad \mbox{ for all } t \ .
\]

The map $\phi_1 \circ \rho_1$ is a symplectomorphism
between $(M_0^+,\omega_0^+)$ and $(M_1^+,\omega_1^+)$.
Similarly for $(M_0^-,\omega_0^-)$ and $(M_1^-,\omega_1^-)$.
\end{proof}

Cutting may be performed for any {\em nonorientable} origami
manifold $(M,\omega)$ by
working with its orientable double cover.
The double cover involution yields a symplectomorphism
from one symplectic cut piece to the other.
Hence, we regard these pieces as a trivial double cover
(of one of them) and call their $\ZZ_2$-quotient
the {\em symplectic cut space} of $(M,\omega)$.
In the case where $M \smallsetminus Z$ is connected,
the symplectic cut space is also connected;
see Example~\ref{ex:cutting_nonorientable}.

\begin{definition}
\label{defn:cut_space}
The \textit{symplectic cut space} of an origami manifold $(M,\omega)$
is the natural $\ZZ_2$-quotient of symplectic cut pieces
of its orientable double cover.
\end{definition}

Notice that, when the original origami manifold is compact,
the symplectic cut space is also compact.

\subsection{Radial blow-up}
\label{ss:blowup}

We can reverse the cutting procedure using an
origami (and simpler) analogue of Gompf's gluing
construction~\cite{go:new}.
{\em Radial blow-up} is a local operation
on a symplectic tubular neighborhood of a codimension-two
symplectic submanifold modeled by the following example.

\begin{example}
Consider the standard symplectic $(\RR^{2n},\omega_0)$
with its standard euclidean metric.
Let $B$ be the symplectic submanifold defined by
$x_1=y_1=0$ with unit normal bundle $N$ identified with
the hypersurface $x_1^2 + y_1^2 = 1$.
The map $\beta : N \times \RR \to \RR^{2n}$ defined by
\[
   \beta ((p,e^{i \theta}),r) = p + (r\cos \theta, r\sin \theta, 0, \ldots, 0)
   \mbox{ for } p \in B
\]
induces by pullback an origami form on the cylinder
$N \times \RR \simeq S^1 \times \RR^{2n-1}$, namely
\[
   \beta^* \omega_0 = r dr \wedge d \theta + dx_2 \wedge dy_2
   + \ldots + dx_n \wedge dy_n \ .
\]
\end{example}

Let $(M,\omega)$ be a symplectic manifold with a
codimension-two symplectic submanifold $B$.
Let $i : B \hookrightarrow M$ be the inclusion map.
Consider the radially projectivized normal bundle over $B$
\[
   \cN := \PP^+ \left( i^*TM / TB \right)
   = \{ x \in (i^* TM)/TB , x \neq 0 \} / \sim
\]
where $\lambda x \sim x$ for $\lambda \in \RR^+$.
We choose an $S^1$ action making $\cN$ a principal circle bundle over $B$.
Let $\varepsilon > 0$.

\begin{definition}
A \textit{blow-up model} for a tubular neighborhood $\cU$ of $B$ in
$(M,\omega)$ is a map
\[
   \beta : \cN \times (-\varepsilon, \varepsilon) \longrightarrow \cU
\]
which factors as
\[
   \begin{array}{rrcccl}
   \beta : & \cN \times (-\varepsilon, \varepsilon) &
   \stackrel{\beta_0}{\longrightarrow} &
   \cN \times_{S^1} \CC &
   \stackrel{\eta}{\longrightarrow} & \cU \\
   & (x,t) & \longmapsto & [x,t]
   \end{array}
\]
where $e^{i \theta} \cdot (x,t) = (e^{i \theta} \cdot x , te^{-i \theta})$
for $(x,t) \in \cN \times \CC$ and
$\eta : \beta_0 \left( \cN \times (-\varepsilon, \varepsilon) \right) \to \cU$
is a tubular bundle diffeomorphism.
By \textit{tubular bundle diffeomorphism} we mean a bundle diffeomorphism
covering the identity $B \to B$ and isotopic to a diffeomorphism given
by geodesic flow for some choice of metric on $\cU$.
\end{definition}

In practice, a blow-up model may be obtained by choosing a
riemannian metric to identify $\cN$ with the unit bundle
inside the geometric normal bundle $TB^\perp$ and then
by using the exponential map: $\beta (x,t) = \exp_p (tx)$ where $p$ is the projection onto $B$ of $x\in\cN$.

\begin{remark}
\label{rem:blowupmodel}
From the properties of $\beta_0$, it follows that:
\begin{itemize}
\item[(i)]
   the restriction of $\beta$ to $\cN \times (0, \varepsilon)$
is an orientation-preserving diffeomorphism onto $\cU \smallsetminus B$;
\item[(ii)]
   $\beta (-x,-t) = \beta (x,t)$;
\item[(iii)]
   the restriction of $\beta$ to $\cN \times \{ 0 \}$ is the bundle projection
$\cN \to B$;
\item[(iv)]
   for the vector fields $\nu$ generating the vertical bundle of
$\cN \to B$ and $\frac{\partial}{\partial t}$ tangent to
$(-\varepsilon, \varepsilon)$ we have that
$D \beta (\nu)$ intersects zero transversally and
$D \beta (\frac{\partial}{\partial t})$ is never zero.
\end{itemize}
\end{remark}

\begin{lemma}
If $\beta: \cN \times (-\varepsilon, \varepsilon) \to \cU$
is a blow-up model for the neighborhood $\cU$ of $B$ in $(M,\omega)$,
then the pull-back form $\beta^* \omega$ is an origami form
whose null foliation is the circle fibration
$\pi : \cN \times \{ 0 \} \to B$.
\end{lemma}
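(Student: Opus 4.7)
The plan is to reduce the claim to a model computation in the normal bundle of $B$. Since $B$ is a codimension-two symplectic submanifold of $(M,\omega)$, the symplectic normal bundle $i^*TM / TB$ is a rank-two symplectic vector bundle, i.e.\ a complex line bundle whose unit circle bundle is canonically identified with $\cN$. By Weinstein's symplectic tubular neighborhood theorem, a neighborhood of $B$ in $M$ is symplectomorphic to a neighborhood of the zero section of this line bundle, with $\omega$ corresponding to a form that, over any chart of $B$ trivializing $\cN$ and using fiber coordinates $x, y$, reads $\pi^*\omega_B + dx \wedge dy$. Combined with the germ-uniqueness statement preceding the lemma (which allows $\beta$ to be replaced by the prototype $\beta_0$ up to a diffeomorphism covering the identity on $B$), this reduces the problem to a direct calculation.

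I would then pick adapted coordinates $(b, e^{i\theta}, r)$ on $\cN \times (-\varepsilon, \varepsilon)$, with $b$ a coordinate on $B$, $\theta$ the $S^1$-fiber direction, and $r$ the interval variable. Under $\beta_0$, such a point maps to the vector $r e^{i\theta}$ in the fiber over $b$. Since $d(r\cos\theta) \wedge d(r\sin\theta) = r\, dr \wedge d\theta$, one line of computation yields
\[
   \beta^*\omega \;=\; \pi^*\omega_B \,+\, r\, dr \wedge d\theta \, ,
\]
where $\pi$ now denotes the composition $\cN \times (-\varepsilon, \varepsilon) \to \cN \to B$.

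From this explicit formula, the folded-symplectic conditions are immediate. Closedness follows from $d\omega = 0$. The top power
\[
   (\beta^*\omega)^n \;=\; n\, r\, dr \wedge d\theta \wedge \pi^*\omega_B^{n-1}
\]
vanishes linearly, hence transversally, along $Z := \cN \times \{0\}$, because $\omega_B^{n-1}$ is a volume form on $B$. The restriction of $\beta^*\omega$ to $Z$ equals $\pi^*\omega_B$, whose $(n-1)$-th power is nowhere zero on the $(2n-1)$-dimensional $Z$, so the maximal-rank condition on the fold is satisfied.

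Finally, to identify the null foliation, I would observe that at points of $Z$ the rank-two kernel $E$ of $\beta^*\omega$ is spanned by $\partial/\partial r$ and $\partial/\partial \theta$: both are annihilated by $\pi^*\omega_B$ (the first being vertical for $\cN \times (-\varepsilon, \varepsilon) \to B$, the second tangent to the $S^1$-fibers), and both are killed by $r\, dr \wedge d\theta$ at $r = 0$. Since $TZ$ contains $\partial/\partial \theta$ but meets $\partial/\partial r$ only at $0$, the null line field $V = TZ \cap E$ is spanned by $\partial/\partial \theta$, i.e.\ it is the vertical tangent bundle of $\pi : \cN \times \{0\} \to B$, which by construction integrates to the chosen principal $S^1$-fibration. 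I expect the only subtle step to be the invocation of Weinstein's theorem to put $\omega$ into its normal form near $B$; once that is in place, the remainder is the short coordinate computation above.
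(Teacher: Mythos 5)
Your approach is genuinely different from the paper's. The paper's proof is purely formal: it observes that properties (1) and (2) of $\beta$ make $\beta^*\omega$ nondegenerate off $\cN\times\{0\}$, that property (3) forces the kernel along $\cN\times\{0\}$ to be the two-plane spanned by the fiber direction and $\partial/\partial t$, and that property (4) gives transversal vanishing of $(\beta^*\omega)^n$ --- no normal form theorem is invoked. Your route, by contrast, tries to reduce to an explicit coordinate computation.

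The gap is in the normal form you write down. Weinstein's symplectic neighborhood theorem identifies a neighborhood of $B$ with a neighborhood of the zero section of the symplectic normal bundle, but the symplectic form on that model is the minimal coupling form $\pi^*\omega_B + d\big(\tfrac{r^2}{2}\alpha\big)$, where $\alpha$ is a connection $1$-form on $\cN$. Over a chart trivializing $\cN$ with $\alpha = d\theta + A$ this reads
\[
   \pi^*\omega_B + r\,dr\wedge d\theta + r\,dr\wedge A + \tfrac{r^2}{2}\,dA \ ,
\]
so the mixed term $r\,dr\wedge A$ and curvature term $\tfrac{r^2}{2}\,dA$ are generally present and cannot be gauged away over a whole chart without also distorting the $\cN$-bundle structure that $\beta_0$ uses. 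Your formula $\beta^*\omega = \pi^*\omega_B + r\,dr\wedge d\theta$ is therefore not the correct pull-back, and the germ-uniqueness statement for blow-up models only normalizes $\beta$, not $\omega$, so it does not rescue the simplification. Fortunately the missing terms carry extra powers of $r$, or too many base directions, so they do not affect the kernel at $r=0$ nor the linear vanishing of $(\beta^*\omega)^n$; your conclusions are all correct, but the argument as written asserts a false identity and needs to be rerun with the honest minimal coupling form (or, more economically, replaced by the pointwise linear-algebra argument the paper extracts directly from properties (1)--(4) of $\beta$).
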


\begin{proof}
By properties~(i) and~(ii) in Remark~\ref{rem:blowupmodel}, the form 
$\beta^* \omega$ is symplectic away from $\cN \times \{ 0 \}$.
By property~(iii), on $\cN \times \{ 0 \}$
the kernel of $\beta^* \omega$ has dimension 2 and is fibrating.
By property~(iv) the top power of $\beta^* \omega$
intersects zero transversally.
\end{proof}

All blow-up models share the same germ up to diffeomorphism.
More precisely,
if $\beta_1: \cN \times (-\varepsilon, \varepsilon) \to \cU_1$
and $\beta_2: \cN \times (-\varepsilon, \varepsilon) \to \cU_2$
are two blow-up models for neighborhoods $\cU_1$ and $\cU_2$ of $B$
in $(M,\omega)$, then there are possibly narrower
tubular neighborhoods of $B$, $\cV_i \subseteq \cU_i$
and a diffeomorphism $\gamma: \cV_1 \to \cV_2$
such that $\beta_2 = \gamma \circ \beta_1$.
Moreover:

\begin{lemma}
\label{lem:isotopic_bupmodels}
Any two blow-up models
$\beta_i: \cN \times (-\varepsilon_i, \varepsilon_i) \to \cU_i$,
$i=1,2$, are isotopic, that is, can be smoothly connected by
a family of blow-up models.
\end{lemma}

\begin{proof}
By definition, the blow-up models factor as
\[
   \beta_i = \eta_i \circ \beta_0 \ , \qquad i=1,2 \ ,
\]
for some tubular neighborhood diffeomorphisms, $\eta_1$ and $\eta_2$,
which are isotopic since the set of riemannian metrics on $\cU$ is convex
and different geodesic flows are isotopic.
\end{proof}

Let $(M,\omega)$ be a symplectic manifold with a
codimension-two symplectic submanifold $B$.

\begin{definition}
A \textit{model involution} of a tubular neighborhood $\cU$ of $B$
is a symplectic involution $\gamma : \cU \to \cU$ preserving $B$
such that on the connected components $\cU_i$ of $\cU$ where
$\gamma (\cU_i) = \cU_i$ we have $\gamma |_{\cU_i} = \mbox{id}_{\cU_i}$.
\end{definition}

A model involution $\gamma$ induces a bundle involution
$\Gamma : \cN \to \cN$ covering $\gamma|_B$ by the formula
\[
   \Gamma [v] = \left[ d\gamma_p (v) \right]\ ,
   \qquad \mbox{ for } v \in T_pM, p \in B \ .
\]
This is well-defined because $\gamma (B) = B$.
We denote by $- \Gamma : \cN \to \cN$ the involution
$[v] \mapsto \left[ - d\gamma_p (v) \right]$.

\begin{remark}
\label{rmk:disjoint}
When $B$ is the disjoint union of $B_1$ and $B_2$,
and correspondingly $\cU = \cU_1 \sqcup \cU_2$,
if $\gamma (B_1) = B_2$ then
\[
   \gamma_1 := \gamma |_{\cU_1} : \cU_1 \to \cU_2
   \qquad \mbox{ and } \qquad
   \gamma |_{\cU_2} = \gamma_1 ^{-1}: \cU_2 \to \cU_1 \ .
\]
In this case, $B / \gamma \simeq B_1$
and $\cN / - \Gamma \simeq \cN_1$
is the radially projectivized normal bundle to $B_1$.
\end{remark}

\begin{proposition}
\label{prop:radial}
Let $(M,\omega)$ be a (compact) symplectic manifold,
$B$ a compact codimension-two symplectic submanifold
and $\cN$ its radially projectivized normal bundle.
Let $\gamma : \cU \to \cU$ be a model involution
of a tubular neighborhood $\cU$ of $B$ and
$\Gamma : \cN \to \cN$ the induced bundle map.

Then there is a (compact) origami manifold
$(\widetilde{M},\widetilde{\omega})$ with
symplectic part $\widetilde{M} \setminus Z$ symplectomorphic
to $M \setminus B$, folding
hypersurface diffeomorphic to $\cN / - \Gamma$
and null fibration isomorphic to
$\cN / - \Gamma \to B / \gamma$.
\end{proposition}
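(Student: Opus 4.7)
The plan is to replace a $\gamma$-quotiented tubular neighborhood of $B$ in $(M,\omega)$ by the quotient of a blow-up cylinder under a natural involution, and then to glue and check the origami structure.

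First, I upgrade the blow-up model to be $\gamma$-equivariant. Since $\gamma$ is an involution, averaging any riemannian metric on $\cU$ produces a $\gamma$-invariant one; the prescription $\beta(x,t):=\exp_{\pi(x)}(tx)$ with $x$ a unit representative in $\cN$ then gives a blow-up model satisfying
\[
    \gamma \circ \beta = \beta \circ (\Gamma \times \mathrm{id}).
\]
Combining this with the intrinsic symmetry $\beta(-x,-t)=\beta(x,t)$, the map $\beta$ intertwines $\gamma$ with the involution
\[
    \tau(x,t) := (-\Gamma(x), -t)
\]
on $\cN \times (-\varepsilon, \varepsilon)$. This involution is free: on components of $\cU$ where $\gamma|_{\cU_i}=\mathrm{id}$, $-\Gamma$ restricts to the fiberwise antipode of $\cN$, which has no fixed points on the $S^1$-fibers; on components swapped by $\gamma$, $-\Gamma$ likewise swaps the corresponding components of $\cN$.

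Second, I form the smooth manifold $X := \cN \times (-\varepsilon, \varepsilon)/\tau$, with distinguished hypersurface $\cN \times \{0\}/\tau \simeq \cN/{-\Gamma}$. By the preceding lemma, $\beta^*\omega$ is an origami form with null fibration $\cN \times \{0\} \to B$; since $\gamma$ is symplectic, $\tau^*\beta^*\omega = \beta^*\omega$, so $\beta^*\omega$ descends to an origami form $\omega_X$ on $X$. Taking the $S^1$-action on $\cN$ to commute with $\Gamma$ (hence with $\tau$), the circle fibration descends to the claimed null fibration $\cN/{-\Gamma} \to B/\gamma$.

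Third, I glue. The restriction of $\beta$ to $\cN \times (0,\varepsilon)$ is a diffeomorphism onto $\cU \smallsetminus B$; postcomposed with the quotient by $\gamma$ it factors through $\tau$, yielding a diffeomorphism
\[
    \varphi : \bigl( \cN \times (0,\varepsilon) \bigr)/\tau \;\xrightarrow{\sim}\; (\cU \smallsetminus B)/\gamma.
\]
Let $M^\gamma$ denote $M \smallsetminus B$ modulo the equivalence generated by $p \sim \gamma(p)$ for $p \in \cU \smallsetminus B$. I define $\widetilde{M} := X \cup_\varphi M^\gamma$; on the overlap $\omega_X$ matches the symplectic form descended from $\omega$, and the two assemble into a global 2-form $\widetilde{\omega}$, symplectic away from $\cN/{-\Gamma}$ and origami across it.

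The main technical obstacle I expect is arranging the $S^1$-action on $\cN$ so that it is simultaneously compatible with $\Gamma$ and with the chosen metric, ensuring that the null fibration on $\cN/{-\Gamma}$ is a genuine principal circle bundle — on components of $B$ fixed by $\gamma$ this is essentially automatic (any principal $S^1$-structure commutes with the antipode), while on components swapped by $\gamma$ the two sides must be chosen compatibly. Naturality, i.e.\ independence of the metric and of the blow-up model up to origami equivalence, follows from the earlier observation that any two admissible blow-up models agree on possibly smaller tubular neighborhoods after a diffeomorphism.
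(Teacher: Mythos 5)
Your first two steps are sound and in fact give a nice alternative to the paper's construction of the equivariant blow-up model: the paper constructs $\beta$ with $\gamma\circ\beta=\beta\circ\Gamma$ ad hoc (trivially on fixed components, transporting by $\gamma$ on swapped ones), whereas you average a metric over the involution $\gamma$ and take the exponential map, which is cleaner. The descent of $\beta^*\omega$ to $X := \cN\times(-\varepsilon,\varepsilon)/\tau$ and the identification of the fold and null fibration are also correct.

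The gluing step, however, has a genuine gap that breaks the argument whenever $\gamma$ swaps components of $B$ (e.g.\ Example~\ref{ex:klein}). Two things go wrong simultaneously.
First, the map $\varphi$ you write down is not a diffeomorphism. The quotient map $\cN\times(-\varepsilon,\varepsilon)\to X$ is \emph{injective} on $\cN\times(0,\varepsilon)$, because the second element $(-\Gamma(x),-t)$ of each $\tau$-orbit has $t<0$; hence $X\smallsetminus(\cN/-\Gamma)$ is diffeomorphic to the full $\cN\times(0,\varepsilon)\simeq\cU\smallsetminus B$, \emph{not} to a $\ZZ_2$-quotient of it. Composing with $\cU\smallsetminus B\to(\cU\smallsetminus B)/\gamma$ therefore gives a two-to-one map over the swapped components: the distinct points $[x_1,t]$ and $[\Gamma(x_1),t]$ of $X$ both land on $[\beta(x_1,t)]=[\gamma(\beta(x_1,t))]$.
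Second, $M^\gamma$ is not a manifold: the identification $p\sim\gamma(p)$ is only imposed on the \emph{open} set $\cU\smallsetminus B$, so for $p$ on the frontier of $\cU_1$ in $M$, the points $p$ and $\gamma$-limits of nearby identified pairs cannot be separated, and $M^\gamma$ fails to be Hausdorff.

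The fix, which is what the paper does, is to glue $X$ to $M\smallsetminus B$ itself, with no $\gamma$-quotient on the $M$ side: set
\[
   \widetilde{M}=\bigl((M\smallsetminus B)\ \sqcup\ \cN\times(-\varepsilon,\varepsilon)\bigr)/\!\sim,
   \qquad (x,t)\sim\beta(x,t)\text{ for }t>0,\quad (x,t)\sim(-\Gamma(x),-t).
\]
The $\gamma$-matching is already built into the $\tau$-identification on the cylinder via $\beta(-\Gamma(x),-t)=\gamma(\beta(x,t))$: the two sides $\cN\times(0,\varepsilon)$ and $\cN\times(-\varepsilon,0)$ of the cylinder are folded onto one another across $\cN\times\{0\}$, and under $\beta$ they see the $\gamma$-related pair of points $\beta(x,t)$ and $\gamma(\beta(x,t))$ of $\cU\smallsetminus B$ --- but these remain \emph{distinct} points of $\widetilde{M}$, lying on opposite sides of the fold. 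Only $B$ and $\cN$ get quotiented (to $B/\gamma$ and $\cN/-\Gamma$), not $M\smallsetminus B$.
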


\begin{proof}
Choose $\beta: \cN \times (-\varepsilon, \varepsilon) \to \cU$
a blow-up model for the neighborhood $\cU$
such that $\gamma \circ \beta = \beta \circ \Gamma$.
This is always possible:
For components $\cU_i$ of $\cU$ where $\gamma (\cU_i) = \cU_i$
this condition is trivial;
for disjoint neighborhood components $\cU_i$ and $\cU_j$ such that
$\gamma (\cU_i) = \cU_j$ (as in Remark~\ref{rmk:disjoint}),
this condition amounts to choosing any blow-up model on one
of these components and transporting it to the other by $\gamma$.

Then $\beta^* \omega$ is a folded symplectic form on
$\cN \times (-\varepsilon, \varepsilon)$
with folding hypersurface $\cN \times \{ 0 \}$ and null foliation
integrating to the circle fibration
$S^1 \hookrightarrow \cN \stackrel{\pi}{\to} B$.
We define
\[
   \widetilde{M} = \left( M \smallsetminus B \bigsqcup
   \cN \times (-\varepsilon,\varepsilon) \right) \, / \, \sim
\]
where we quotient by
\[
   (x,t) \sim \beta (x,t) \mbox{ for } t>0
   \qquad \mbox{ and } \qquad
   (x,t) \sim \left( -\Gamma(x),-t \right)\ .
\]
The forms $\omega$ on $M \smallsetminus B$ and
$\beta^* \omega$ on $\cN \times (-\varepsilon,\varepsilon)$
induce on $\widetilde{M}$ an origami form $\widetilde{\omega}$
with folding hypersurface $\cN / - \Gamma$.
Indeed $\beta$ is a symplectomorphism for $t>0$,
and $\left( - \Gamma, -\mbox{id} \right)$
on $\cN \times (-\varepsilon,\varepsilon)$
is a symplectomorphism away from $t=0$
(since $\beta$ and $\gamma$ are)
and at points where $t=0$ it is a local diffeomorphism.
\end{proof}

\begin{definition}
An origami manifold $(\widetilde{M},\widetilde{\omega})$
as just constructed is called a \textit{radial blow-up of
$(M,\omega)$ through $(\gamma,B)$}.
\end{definition}

The next proposition states that radial blow-ups of
$(M,\omega)$ through $(\gamma,B)$ are unique up to symplectomorphism.

\begin{proposition}
Let $(M,\omega)$ be a symplectic manifold,
$B$ a compact codimension-two symplectic submanifold
and $\gamma : \cU \to \cU$ a model involution
of a tubular neighborhood $\cU$ of $B$.
Then different choices of a blow-up model for $\cU$
yield symplectomorphic radial blow-ups through $(\gamma,B)$.
\end{proposition}

\begin{proof}
Let $\beta_0$ and $\beta_1$ be two blow-up models for $\cU$
such that $\gamma \circ \beta_i = \beta_i \circ \Gamma$.
We restrict them to the same domain with $\varepsilon$
small enough and still denote
\[
   \beta_i : \cN \times (-\varepsilon,\varepsilon)
   \longrightarrow \cU_i \ , \qquad i=0,1 \ .
\]
Let $\beta_t : \cN \times (-\varepsilon,\varepsilon) \to \cU_t$
be a technical isotopy between $\beta_0$ and $\beta_1$
in the sense of~\cite[p.89]{br-ja:book}.
Each of the corresponding origami manifolds
$(\widetilde{M}_t,\widetilde{\omega}_t)$ is defined as a quotient of
the disjoint union of $(M \smallsetminus B,\omega)$
with $(\cN \times (-\varepsilon,\varepsilon),\beta_t^*\omega)$
by the equivalence relation $\sim_t$ given by $\beta_t$ and $\Gamma$
as in the proof of Proposition~\ref{prop:radial}.
Let $C$ be a compact neighborhood of
$\beta \left( [0,1] \times \cN \times [\delta,\varepsilon-\delta] \right) :=
\cup_{t \in [0,1]} \beta_t \left( \cN \times [\delta,\varepsilon-\delta] \right)$
in $M \setminus B$, where $\delta < \frac{\varepsilon}{2}$.

By Theorem~10.9 in~\cite{br-ja:book}, there is a smooth
family of diffeomorphisms
\[
   H_t : M \longrightarrow M \ , \qquad t \in [0,1] \ ,
\]
which hold fixed all points outside $C$ (in particular, the $H_t$ fix $B$),
with $H_0$ the identity map and such that
\[
   \beta_t |_{\cN \times [\delta,\varepsilon-\delta]}
   = H_t \circ \beta_0 |_{\cN \times [\delta,\varepsilon-\delta]} \ .
\]
By property~(ii) in Remark~\ref{rem:blowupmodel},
the same holds on $\cN \times [-\varepsilon+\delta,-\delta]$.

The diffeomorphism $H_t$ restricted to
$D := M \setminus \beta_0 \left( \cN \times (-\delta,\delta) \right)$
and the identity diffeomorphism on
$\cN \times (-\varepsilon + \delta, \varepsilon - \delta)$
together define a diffeomorphism
\[
   \phi_t : \widetilde{M}_0 \longrightarrow \widetilde{M}_t
\]
which fixes the fold $\cN / -\Gamma$.

All forms in the family $\phi_t^* \widetilde{\omega_t}$ on $\widetilde{M}_0$
are origami with the same fold, are equal at points of that fold,
and are all equal to $\widetilde{\omega_0}$ away from
\[
   C \cup \beta_0 \left( \cN \times [-\delta,\delta] \right)
\]
which is a compact neighborhood of the fold in $\widetilde{M}_0$
retracting to the fold.
Hence, by a folded version of Moser's trick
(see the proof of Theorem~1 in~\cite{ca-gu-wo:unfolding})
there is an isotopy $\rho_t : \widetilde{M}_0 \to \widetilde{M}_0$, $t \in \RR$,
fixing the fold such that
\[
   \rho_t^* \left( \phi_t^* \widetilde{\omega_t} \right) = \widetilde{\omega_0}
   \qquad \mbox{ for all } t \ .
\]
The map $\phi_1 \circ \rho_1$ is a symplectomorphism
between $(\widetilde{M}_0,\widetilde{\omega}_0)$ and
$(\widetilde{M}_1,\widetilde{\omega}_1)$.
\end{proof}

Different model involutions $\gamma_i : \cU_i \to \cU_i$
of tubular neighborhoods of $B$ give rise to symplectomorphic
origami manifolds as long as the induced bundle maps
$\Gamma_i : \cN \to \cN$ are isotopic.
Examples~\ref{ex:klein2} and~\ref{ex:klein} illustrate the
dependence on the model involution.

\begin{example}
\label{ex:rp2}
Let $M$ be a 2-sphere, $B$ one point on it,
and $\gamma$ the identity map on a neighborhood of that point.
Then a radial blow-up $\widetilde{M}$ is $\RR \PP^2$ and
$\widetilde{\omega}$ a form which folds along a circle.
\end{example}

\begin{example}
\label{ex:klein2}
Let $M$ be a 2-sphere, $B$ the union of two (distinct) points on it,
and $\gamma$ the identity map on a neighborhood of those points.
Then a radial blow-up $\widetilde{M}$ is a Klein bottle
and $\widetilde{\omega}$ a form which folds along two circles.
\end{example}

\begin{example}
\label{ex:klein}
Again, let $M$ be a 2-sphere, $B$ the union of two (distinct) points on it,
and now $\gamma$ defined by a symplectomorphism from a Darboux neighborhood
of one point to a Darboux neighborhood of the other.
Then a radial blow-up $\widetilde{M}$ is a Klein bottle
and $\widetilde{\omega}$ a form which folds along a circle.
\end{example}

\begin{remark}
\label{rmk:tubular}
The quotient $\cN \times (-\varepsilon,\varepsilon) /
\left( -\Gamma, -\mbox{id} \right)$
provides a collar neighborhood of the fold in
$(\widetilde{M},\widetilde{\omega})$.

When $B$ splits into two disjoint components
interchanged by $\gamma$ as in Remark~\ref{rmk:disjoint},
this collar is orientable so the fold is coorientable.
Example~\ref{ex:klein}
illustrates a case where, even though the fold is coorientable,
the radial blow-up $(\widetilde{M},\widetilde{\omega})$
is not orientable.

When $\gamma$ is the identity map, as in Example~\ref{ex:rp2},
the collar is nonorientable and the fold is not coorientable.
In the latter case, the collar is a bundle of M\"obius bands
$S^1 \times (-\varepsilon,\varepsilon) / (-\mbox{id},-\mbox{id})$ over $B$.

In general, $\gamma$ will be the identity over some
connected components of $B$ and will interchange other components,
so some components of the fold will be coorientable
and others will not.
\end{remark}

\begin{remark}
\label{rmk:orientable}
For the radial blow-up $(\widetilde{M},\widetilde{\omega})$ to be orientable,
the starting manifold $(M,\omega)$ must be the disjoint union
of symplectic manifolds $(M_1,\omega_1)$ and $(M_2,\omega_2)$
with $B = B_1 \cup B_2$, $B_i \subset M_i$, such that
$\gamma (B_1) = B_2$ as in Remark~\ref{rmk:disjoint}.
In this case $(\widetilde{M},\widetilde{\omega})$
may be equipped with an orientation such that
\[
   \widetilde{M}^+ \simeq M_1 \smallsetminus B_1
   \qquad \mbox{ and } \qquad
   \widetilde{M}^- \simeq M_2 \smallsetminus B_2 \ .
\]
The folding hypersurface is diffeomorphic to $\cN_1$ (or $\cN_2$)
and we have
\[
   \widetilde{\omega}\simeq\left\{
   \begin{array}{ll}
   \omega_1&\text{ on } M_1 \smallsetminus B_1\\ 
   \omega_2&\text{ on } M_2 \smallsetminus B_2\\
   \beta^* \omega_1&\text{ on }  \cN \times (-\varepsilon,\varepsilon)
\end{array}\right.
\]
We then say that {\em $(\widetilde{M},\widetilde{\omega})$
is the blow-up of $(M_1,\omega_1)$ and $(M_2,\omega_2)$
through $(\gamma_1,B_1)$} where $\gamma_1$ is the restriction of
$\gamma$ to a tubular neighborhood of $B_1$.
\end{remark}

\begin{remark}
Radial blow-up may be performed on an origami manifold
at a symplectic submanifold $B$ (away from the fold).
When we start with two folded surfaces and radially blow them up
at one point (away from the folding curves),
topologically the resulting manifold is a
connected sum at a point, $M_1 \# \overline{M_2}$,
with all the previous folding curves plus a new closed
curve.
Since all
$\RR \PP ^{2n}$ are folded symplectic manifolds, the standard real
blow-up of a folded symplectic manifold at a point away from its
folding hypersurface still admits a folded symplectic form,
obtained by viewing this operation as a connected sum.
\end{remark}

\subsection{Cutting a radial blow-up}
\label{ss:cutting_blowup}

\begin{proposition}
\label{prop:cutting_blowup}
Let $(M,\omega)$ be a radial blow-up of
the symplectic manifolds
$(M_1,\omega_1)$ and $(M_2,\omega_2)$
through $(\gamma_1,B_1)$ where $\gamma_1$
is a symplectomorphism of tubular neighborhoods
of codimension-two symplectic submanifolds $B_1$ and $B_2$
of $M_1$ and $M_2$, respectively, taking $B_1$ to $B_2$.

Then cutting $(M,\omega)$ yields manifolds
symplectomorphic to $(M_1,\omega_1)$ and $(M_2,\omega_2)$
where the symplectomorphisms carry $B$ to $B_1$ and $B_2$.
\end{proposition}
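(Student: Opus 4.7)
The plan is to show that the radial blow-up and symplectic cut operations locally invert each other, by using the very blow-up model that built $M$ to provide a Moser model for the cutting of Proposition~\ref{pro:cutting}. Since both operations are supported in tubular neighborhoods of the fold (respectively, the center), outside such neighborhoods the identifications $M^+ \simeq M_1 \setminus B_1$ and $M^- \simeq M_2 \setminus B_2$ from Remark~\ref{rmk:orientable} are already symplectomorphisms which will glue onto whatever we produce locally.

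Fix the blow-up model $\beta: \cN \times (-\varepsilon, \varepsilon) \to \cU_1 \sqcup \cU_2$ used in constructing $(M, \omega)$, and focus on the $+$ side (the $-$ side being symmetric via the relation $\gamma \circ \beta = \beta \circ \Gamma$). Restricted to the component $\cN_1 \times (-\varepsilon, \varepsilon)$, the factorization $\beta(x, t) = [x, t] \in (\cN_1 \times \CC)/S^1 \cong \cU_1$ and the minimal-coupling description of the reduced symplectic form on $(\cN_1 \times \CC)/S^1$ yield the identity
\[
   \beta^* \omega_1 \;=\; p^* \omega_B + d\bigl(t^2 \, p^* \alpha \bigr),
\]
where $p: \cN_1 \to B_1$ is the bundle projection and $\alpha$ is the $S^1$-connection implicit in the reduction. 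This is precisely the Moser normal form appearing in the proof of Proposition~\ref{pro:cutting}: the substitution $s = t^2$ produces the extended form $\nu = p^* \omega_B + d(s \, p^* \alpha)$ on $\cN_1 \times (-\varepsilon^2, \varepsilon^2)$, and reducing the product $(\cN_1 \times (-\varepsilon^2, \varepsilon^2) \times \CC, \, \nu - \omega_0)$ at $\mu(x, s, z) = s - |z|^2/2 = 0$ yields $\mu^{-1}(0)/S^1$. The map $[x, s, z] \mapsto [x, z]$ identifies this reduction canonically with $(\cN_1 \times \CC)/S^1 \cong \cU_1$, sending the center $B \simeq Z/S^1$ to $B_1$ and the open dense piece $\cU^+$ back to $\cU_1 \setminus B_1$. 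Gluing this local symplectomorphism with the identification $M^+ \setminus \cU^+ \simeq M_1 \setminus \cU_1$ produces the required symplectomorphism $M^+_0 \to M_1$ carrying $B$ to $B_1$, and the analogous argument on the $-$ side produces the symplectomorphism $M^-_0 \to M_2$.

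The main technical obstacle is justifying the identification of $\beta^* \omega_1$ as a minimal-coupling form. Concretely, on $\cN_1 \times \CC$ the form descending to the symplectic form on $(\cN_1 \times \CC)/S^1 \cong \cU_1$ can be written as $\pi^* \omega_B + d\bigl(\tfrac{|z|^2}{2} \pi^* \alpha\bigr) + \tfrac{i}{2} dz \wedge d \bar z$; restricting along the real slice $z = t \in \RR$ kills the $\tfrac{i}{2} dz \wedge d \bar z$ contribution and yields the asserted formula. Once this local identification is in place, what remains is bookkeeping: matching orientations (so that $M^+_0$ inherits the positive symplectic orientation while $M^-_0$ inherits the reversed one, consistent with Proposition~\ref{pro:cutting}), and invoking the uniqueness of the Moser model to conclude independence of auxiliary choices.
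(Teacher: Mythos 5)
Your proposal takes essentially the same route as the paper: use the blow-up model $\beta$ to transport the reduced space from the cut construction back onto $\cU_1 \subset M_1$, and glue this local identification to the tautological identification $M^+ \simeq M_1 \smallsetminus B_1$. The difference lies in how you finish. You try to nail down the pointwise identity $\beta^* \omega_1 = p^* \omega_B + d(t^2 p^* \alpha)$ via minimal coupling, whereas the paper defines the candidate diffeomorphism $\rho_1 : M_1 \to M_0^+$ (identity on $M_1 \smallsetminus B_1$, the map $\delta_1 : [x,z] \mapsto [x,|z|^2, z\sqrt 2]$ on $\cU_1$), verifies well-definedness by chasing the gluing relation $\beta(x,t) \sim [x,t^2,t\sqrt{2}]$, and then closes the argument with the soft observation that a diffeomorphism between symplectic manifolds which is symplectic on a dense open set is symplectic everywhere. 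That last step is precisely what saves one from having to prove the Moser-form identity exactly.

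The gap in your version is in the step you yourself flag as the ``main technical obstacle.'' You assert that the symplectic form on $\cU_1$ pulls back, under the bundle diffeomorphism $(\cN_1 \times \CC)/S^1 \to \cU_1$ built into the blow-up model, to the minimal-coupling form $\pi^* \omega_B + d\bigl(\tfrac{|z|^2}{2}\pi^*\alpha\bigr) + \tfrac{i}{2} dz \wedge d\bar z$. But the second arrow in the definition of a blow-up model is only required to be a bundle diffeomorphism covering $\mathrm{id}_B$; it is not required to be a symplectomorphism for any preferred model form, and there is nothing in the construction of $(M,\omega)$ as a radial blow-up that forces this. To make the claim legitimate you must either (i) invoke the equivariant symplectic neighborhood (Weinstein) theorem to replace $\beta$ by a compatible blow-up model for which the minimal-coupling description of $\omega_1|_{\cU_1}$ does hold, and then appeal to the fact that all blow-up models produce equivalent $(M,\omega)$, or (ii) forgo the exact identity and argue as the paper does, via the well-definedness check and density. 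Option (ii) is cleaner and is the route the paper takes. Finally, note that the second half of the argument (for $M_0^-$ and $M_2$) is not merely a mirror image: one must thread $\gamma^{-1}$ into the map $\delta_2$ and re-verify the gluing relation $\gamma(\beta(x,t)) \sim [x,t^2,t\sqrt 2]$ for $t<0$; your ``symmetric via $\gamma \circ \beta = \beta \circ \Gamma$'' remark should be expanded to actually do this.
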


\begin{proof}
We first exhibit a symplectomorphism $\rho_1$ between a
cut space $(M_0^+,\omega_0^+)$ of $(M,\omega)$
and the original manifold $(M_1,\omega_1)$.

Let $\cN$ be the radially projectivized normal bundle to $B_1$ in $M_1$
and let $\beta : \cN \times (-\varepsilon,\varepsilon) \to \cU_1$
be a blow-up model.
The cut space $M_0^+$ is obtained gluing the reduced space
\[
   \mu^{-1} (0) / S^1 =
   \left\{ (x,s,z) \in Z \times [0,\varepsilon^2) \times \CC
   \; | \; s = \textstyle{\frac{|z|^2}{2}} \right\} / S^1
\]
with the manifold
\[
   M_1 \smallsetminus B_1
\]
via the diffeomorphisms
\[
\begin{array}{rclcrcl}
   \cN \times (0,\varepsilon) & \longrightarrow & \mu^{-1} (0) / S^1
   & \quad \mbox{and} \quad &
   \cN \times (0,\varepsilon) & \longrightarrow & \cU_1 \smallsetminus B_1 \\
   (x,t) & \longmapsto & [x,t^2,t\sqrt{2}]
   & & (x,t) & \longmapsto & \beta (x,t)
\end{array}
\]
i.e., the gluing is by the identification
$[x,t^2,t\sqrt{2}] \sim \beta (x,t)$ for $t>0$ over $\cU_1 \smallsetminus B_1$.
The symplectic form $\omega_0^+$ on $M_0^+$ is equal to the reduced
symplectic form on $\mu^{-1} (0) / S^1$ and equal to $\omega_1$
on $M_1 \smallsetminus B_1$ (the gluing diffeomorphism
$[x,t^2,t\sqrt{2}] \mapsto \beta (x,t)$ is a symplectomorphism).

We want to define a map $\rho_1 : M_1 \to M_0^+$ which is
the identity on $M_1 \smallsetminus B_1$ and on $\cU_1$ is the
composed diffeomorphism
\[
\begin{array}{rrcccl}
   \delta_1 : & \cU_1 & \longrightarrow & \left( \cN \times \CC \right) / S^1
   & \longrightarrow & \mu^{-1} (0) / S^1 \\
   & & & [x,z] & \longmapsto & [x,|z|^2, z\sqrt{2}]
\end{array}
\]
where the first arrow is the inverse of the bundle isomorphism
given by the blow-up model.
In order to show that $\rho_1$ is well-defined
we need to verify that $u_1 \in \cU_1 \smallsetminus B_1$
is equivalent to its image
$\delta_1 (u_1) \in \mu^{-1} (0) / S^1 \smallsetminus B$.
Indeed $u_1$ must correspond to
$[x,z] \in \left( \cN \times \CC \right) / S^1$ with $z \neq 0$.
We write $z$ as $z=te^{i\theta}$ with $t>0$.
Since $[x,z] = [e^{i\theta}x,t]$, we have $u_1 = \beta(e^{i\theta}x,t)$
and $\delta_1 (u_1) = [e^{i\theta}x,|t|^2, t\sqrt{2}]$.
These two are equivalent under $\beta (x,t) \sim [x,t^2,t\sqrt{2}]$,
so $\rho_1$ is well-defined.

Furthermore, $M_1$ and $M_0^+$ are symplectic manifolds
equipped with a diffeomorphism which is a symplectomorphism
on the common dense subset $M_1 \smallsetminus B_1$.
We conclude that $M_1$ and $M_0^+$ must be globally symplectomorphic.

Now we tackle $(M_2,\omega_2)$ and $(M_0^-,\omega_0^-)$.
The cut space $M_0^-$ is obtained gluing the same reduced space
$\mu^{-1} (0) / S^1$ with the manifold  $M_2 \smallsetminus B_2$
via the diffeomorphisms
\[
\begin{array}{rcl}
   \cN \times (-\varepsilon,0) & \longrightarrow
   & \overline{\mu^{-1} (0) / S^1} \\
   (x,t) & \longmapsto & [x,t^2,t\sqrt{2}]
\end{array}
\]
and
\[
\begin{array}{rcccl}
   \cN \times (-\varepsilon,0) & \longrightarrow
   & \overline{\cU_1 \smallsetminus B_1} & \stackrel{\gamma}{\longrightarrow}
   & \overline{\cU_2 \smallsetminus B_2} \\
   (x,t) & \longmapsto & \beta (x,t) & \longmapsto
   & \gamma \left( \beta (x,t) \right)
\end{array}
\]
i.e., the gluing is by the identification
$[x,t^2,t\sqrt{2}] \sim \gamma \left( \beta (x,t) \right)$
for $t < 0$ over $\cU_2 \smallsetminus B_2$.
The symplectic form $\omega_0^-$ on $M_0^-$ restricts
to the reduced form on $\mu^{-1} (0) / S^1$
and $\omega_2$ on $M_2 \smallsetminus B_2$.

We want to define a map $\rho_2 : M_2 \to M_0^-$ as being
the identity on $M_2 \smallsetminus B_2$ and on $\cU_2$ being the
composed diffeomorphism
\[
\begin{array}{rrcccrcl}
   \delta_2 : & \cU_2 & \stackrel{\gamma^{-1}}{\longrightarrow}
   & \cU_1 & \longrightarrow & \left( \cN \times \CC \right) / S^1
   & \longrightarrow & \mu^{-1} (0) / S^1 \\
   & & & & & [x,z] & \longmapsto & [x,|z|^2, z\sqrt{2}]
\end{array}
\]
where the second arrow is the inverse of the bundle isomorphism
given by the blow-up model.
In order to show that $\rho_2$ is well-defined
we need to verify that $u_2 = \gamma( u_1 ) \in \cU_2 \smallsetminus B_2$
is equivalent to its image
$\delta_2 (u_2) \in \mu^{-1} (0) / S^1 \smallsetminus B$.
Indeed $u_1$ must correspond to
$[x,z] \in \left( \cN \times \CC \right) / S^1$ with $z \neq 0$.
We write $z$ as $z=-te^{i\theta}$ with $t<0$.
From $[x,z] = [-e^{i\theta}x,t]$, we conclude that
$u_2 = \gamma \left( \beta(-e^{i\theta}x,t) \right)$
and $\delta_2 (u_2) = [-e^{i\theta}x,|t|^2, t\sqrt{2}]$.
These two are equivalent under
$\gamma \left( \beta (x,t) \right) \sim [x,t^2,t\sqrt{2}]$,
so $\rho_2$ is well-defined.

As before, we conclude that $M_2$ and $M_0^-$
must be globally symplectomorphic.
\end{proof}

\begin{lemma}
\label{lemma:double_cover}
Let $(M,\omega)$ be a radial blow-up of the symplectic manifold
$(M_s,\omega_s)$ through $(\gamma,B)$.
We write $B = B_0 \sqcup B_1 \sqcup B_2$ and the domain of $\gamma$
as $\cU = \cU_0 \sqcup \cU_1 \sqcup \cU_2$ where $\gamma$ is the
identity map on $\cU_0$ and exchanges $\cU_1$ and $\cU_2$.

Let $(\overline{M_s},\overline{\omega_s})$ be the trivial double cover
of $(M_s,\omega_s)$ with
$\overline{B} = B^{\uparrow} \sqcup B^{\downarrow}$,
$\overline{\cU} = \cU^{\uparrow} \sqcup \cU^{\downarrow}$
the double covers of $B$ and $\cU$.
Let $\overline{\gamma} : \overline{\cU} \to \overline{\cU}$
be the lift of $\gamma$ satisfying
\[
   \overline{\gamma} (\cU_0^{\uparrow}) = \cU_0^{\downarrow} \ ,
   \qquad
   \overline{\gamma} (\cU_1^{\uparrow}) = \cU_2^{\downarrow}
   \qquad \mbox{ and } \qquad
   \overline{\gamma} (\cU_2^{\uparrow}) = \cU_1^{\downarrow}
\]
and let $(\overline{M},\overline{\omega})$ be a radial blow-up of
$(\overline{M_s},\overline{\omega_s})$ through
$(\overline{\gamma}, \overline{B})$.

Then $(\overline{M},\overline{\omega})$ is an orientable
double cover of $(M,\omega)$.
\end{lemma}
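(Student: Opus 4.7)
My plan is to exhibit a free, orientation-reversing involution $\widetilde\sigma$ on $\overline M$ whose quotient is $M$, and to check orientability of $\overline M$ by reducing to the case already treated in Remark~\ref{rmk:orientable}. The guiding principle is that the blow-up construction commutes with the trivial double cover in the appropriate sense.

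Setup and formulas. I would parameterize $\overline{M_s} = M_s \times \{+,-\}$ with the nontrivial deck transformation $\sigma(p,\pm) = (p,\mp)$, so that $\overline B = B \times \{+,-\}$ and $\overline \cU = \cU \times \{+,-\}$. The explicit specification of $\overline\gamma$ in the hypothesis is equivalent to the closed formula $\overline\gamma(p,\epsilon) = (\gamma(p),-\epsilon)$ (a quick check on each $\cU_i$). Differentiating along the normal bundle gives $\overline\Gamma(x,\epsilon) = (\Gamma(x),-\epsilon)$ on $\cN_{\overline B} = \cN \times \{+,-\}$. Now choose as lifted blow-up model the natural one, $\overline\beta((x,\epsilon),t) = (\beta(x,t),\epsilon)$ for any blow-up model $\beta$ of $B$ in $M_s$ with $\gamma\circ\beta = \beta\circ\Gamma$; the analogous compatibility $\overline\gamma\circ\overline\beta = \overline\beta\circ\overline\Gamma$ then follows directly.

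Construction and descent of $\widetilde\sigma$. Define $\widetilde\sigma$ on $\overline M$ as sheet swap: $\widetilde\sigma(q,\epsilon) = (q,-\epsilon)$ on $\overline{M_s}\smallsetminus\overline B$ and $\widetilde\sigma((x,\epsilon),t) = ((x,-\epsilon),t)$ on the collar $\cN_{\overline B}\times(-\varepsilon,\varepsilon)$. To verify that $\widetilde\sigma$ descends to $\overline M$, I would check compatibility with both identifications in the blow-up presentation: the relation $((x,\epsilon),t)\sim\overline\beta((x,\epsilon),t) = (\beta(x,t),\epsilon)$ for $t>0$ is preserved tautologically (both sides swap $\epsilon$), and the antipodal identification $((x,\epsilon),t)\sim((-\Gamma(x),-\epsilon),-t)$ is preserved because $\widetilde\sigma$ of the right-hand side equals $((-\Gamma(x),\epsilon),-t)$, which is precisely the antipodal partner of $\widetilde\sigma((x,\epsilon),t) = ((x,-\epsilon),t)$. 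Thus $\widetilde\sigma$ is a well-defined, fixed-point-free involution of $\overline M$.

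Quotient and orientability. Define $p: \overline M \to M$ by $(q,\epsilon)\mapsto q$ on $\overline{M_s}\smallsetminus\overline B$ and $((x,\epsilon),t)\mapsto(x,t)$ on the collar; well-definedness on the two identifications is immediate from $\pi\circ\overline\gamma = \gamma\circ\pi$ and the choice of $\overline\beta$. The map $p$ visibly satisfies $p\circ\widetilde\sigma = p$ and is locally a diffeomorphism, so it factors through a diffeomorphism $\overline M/\widetilde\sigma \to M$, and by the construction of $\overline\omega$ and $\omega$ from the same data on $M_s\smallsetminus B$ and on the collar, this map pulls $\omega$ back to $\overline\omega$. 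For orientability of $\overline M$, observe that $\overline{M_s} = (M_s\times\{+\})\sqcup(M_s\times\{-\})$ is the disjoint union of two symplectic manifolds, $\overline B$ splits accordingly as $B^\uparrow \sqcup B^\downarrow$, and the formula $\overline\gamma(p,\epsilon) = (\gamma(p),-\epsilon)$ shows $\overline\gamma(\overline\cU^\uparrow) = \overline\cU^\downarrow$; this puts us exactly in the orientable case of Remark~\ref{rmk:orientable}, so $\overline M$ carries a canonical orientation with $\overline M^{\pm}\simeq M_s\smallsetminus B$ on each sheet, and $\widetilde\sigma$ swaps $\overline M^+$ and $\overline M^-$, hence is orientation-reversing.

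The step most deserving of care is the verification that the sheet-swap $\widetilde\sigma$ is compatible with the antipodal identification on the collar; this reduces to checking that $\sigma_N$ commutes with $\overline\Gamma$, which in turn comes down to the explicit product formula $\overline\Gamma(x,\epsilon) = (\Gamma(x),-\epsilon)$ extracted from the hypothesis on $\overline\gamma$. Everything else is bookkeeping tied to the construction in the proof of the blow-up proposition.
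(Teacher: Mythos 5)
Your proposal is correct and takes essentially the same approach as the paper: both invoke Remark~\ref{rmk:orientable} to establish orientability (since $\overline\gamma$ swaps the two sheets $\overline{M_s}$ splits into), and both exhibit the natural two-to-one projection $\overline M\to M$ coming from the compatibility of the blow-up construction with the trivial double cover — you merely phrase it by first building the deck involution $\widetilde\sigma$ and then passing to the quotient, whereas the paper writes the projection directly. The one point you assert without verification is that $\widetilde\sigma$ is fixed-point-free on the fold; this reduces, as the paper notes explicitly, to $-\Gamma$ having no fixed points on $\cN$, which follows since $-\overline\Gamma$ exchanges the $\uparrow$ and $\downarrow$ sheets of $\cN_{\overline B}$.
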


\begin{proof}
Since it is the double cover of an oriented manifold, we write
\[
   \overline{M_s} = M_s^{\uparrow} \sqcup M_s^{\downarrow}
\]
with each component diffeomorphic to $M_s$.
By Remark~\ref{rmk:orientable}, the blow-up
$(\overline{M},\overline{\omega})$ is orientable and has
\[
   \overline{M}^+ \simeq  M_s^{\uparrow} \smallsetminus B^{\uparrow}
   \qquad \mbox{ and } \qquad
   \overline{M}^- \simeq  M_s^{\downarrow} \smallsetminus B^{\downarrow}
\]
with fold $\cN^{\uparrow}$ fibering over $B^{\uparrow}$.
There is a natural two-to-one smooth projection
$\overline{M} \to M$ taking
$M_s^{\uparrow} \smallsetminus B^{\uparrow}$ and
$M_s^{\downarrow} \smallsetminus B^{\downarrow}$
each diffeomorphically to $M \smallsetminus Z$ where $Z$ is
the fold of $(M,\omega)$, and taking the fold $\cN^{\uparrow} \simeq \cN$
of $(\overline{M},\overline{\omega})$ to $Z \simeq \cN / -\Gamma$
with $\Gamma : \cN \to \cN$ the bundle map induced by $\gamma$
(the map $-\Gamma$ having no fixed points).
\end{proof}

\begin{corollary}
\label{coroll:cutting_blowup}
Let $(M,\omega)$ be a radial blow-up of
the symplectic manifold $(M_s,\omega_s)$ through $(\gamma,B)$.
Then cutting $(M,\omega)$ yields a manifold
symplectomorphic to $(M_s,\omega_s)$
where the symplectomorphism carries the base to $B$.
\end{corollary}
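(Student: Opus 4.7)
The plan is to reduce to the oriented, disjoint-union situation already handled by Proposition~\ref{prop:cutting_blowup}, via the double-cover construction of Lemma~\ref{lemma:double_cover}.

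First, I invoke Lemma~\ref{lemma:double_cover} to produce the orientable double cover $(\overline{M},\overline{\omega})$, realized as the blow-up of $(\overline{M_s},\overline{\omega_s})=(M_s^{\uparrow},\omega_s)\sqcup(M_s^{\downarrow},\omega_s)$ through $(\overline{\gamma},\overline{B})$. The key observation is that by construction $\overline{\gamma}(B^{\uparrow})=B^{\downarrow}$, so $\overline{\gamma}$ interchanges the two components of $\overline{M_s}$. Thus the double cover is precisely in the setting of Remark~\ref{rmk:orientable}, i.e., the blow-up of the two disjoint symplectic manifolds $(M_s^{\uparrow},\omega_s)$ and $(M_s^{\downarrow},\omega_s)$ through $(\overline{\gamma}_1,B^{\uparrow})$, where $\overline{\gamma}_1$ is the restriction of $\overline{\gamma}$ to a tubular neighborhood of $B^{\uparrow}$.

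Second, I apply Proposition~\ref{prop:cutting_blowup} to $(\overline{M},\overline{\omega})$. This yields symplectomorphisms $\rho^{\uparrow}$ and $\rho^{\downarrow}$ from the two symplectic cut pieces $(\overline{M}_0^{+},\overline{\omega}_0^{+})$ and $(\overline{M}_0^{-},\overline{\omega}_0^{-})$ to $(M_s^{\uparrow},\omega_s)$ and $(M_s^{\downarrow},\omega_s)$ respectively, each carrying the center to $B^{\uparrow}$ or $B^{\downarrow}$.

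Third, I descend to the $\ZZ_2$-quotient prescribed by Definition~\ref{defn:cut_space}. The deck involution $\tau$ on $\overline{M}$ covering the identity on $M$ must extend to a symplectic involution $\tau_0$ on $(\overline{M}_0^{+},\overline{\omega}_0^{+})\sqcup(\overline{M}_0^{-},\overline{\omega}_0^{-})$ swapping the two pieces, and under $\rho^{\uparrow}\sqcup\rho^{\downarrow}$ this involution corresponds to the natural deck involution on $\overline{M_s}=M_s^{\uparrow}\sqcup M_s^{\downarrow}$ that identifies both copies with $M_s$. Taking the $\ZZ_2$-quotient then yields a single symplectic manifold symplectomorphic to $(M_s,\omega_s)$, with the image of the two centers $B^{\uparrow},B^{\downarrow}$ collapsing to $B$.

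The main obstacle is this last compatibility check: one must verify that the cutting construction is genuinely equivariant with respect to the deck symmetry, so that $\tau$ lifts to $\tau_0$ intertwining with the swap on $\overline{M_s}$. Concretely, this means tracking the explicit identifications $[x,t^2,t\sqrt 2]\sim\beta(x,t)$ used to build the cut pieces in Proposition~\ref{prop:cutting_blowup} through the two-to-one projection $\overline{M}\to M$ of Lemma~\ref{lemma:double_cover}, and checking that the blow-up model $\beta$ (and its transport by $\gamma$) is chosen compatibly with the deck action. Once equivariance is established, descent of symplectic forms to the quotient is routine since $\tau_0$ acts freely away from common fixed loci in the centers and the two pieces are swapped.
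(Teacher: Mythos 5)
Your proof is correct and takes essentially the same route as the paper: pass to the orientable double covers via Lemma~\ref{lemma:double_cover}, apply Proposition~\ref{prop:cutting_blowup} there, and descend through the $\ZZ_2$-quotient prescribed by Definition~\ref{defn:cut_space}. The paper states the descent step more tersely (it simply notes that the cut space of the double cover is the double cover of the cut space, by definition, and concludes), whereas you helpfully spell out the equivariance check that makes the descent work.
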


\begin{proof}
Let $(M_{cut},\omega_{cut})$ be a symplectic cut space of $(M,\omega)$.
Let $(\overline{M_s},\overline{\omega_s})$ and
$(\overline{M_{cut}},\overline{\omega_{cut}})$ be the
trivial double covers of $(M_s,\omega_s)$
and $(M_{cut},\omega_{cut})$.
By Lemma~\ref{lemma:double_cover}, the radial blow-up
$(\overline{M},\overline{\omega})$ of 
$(\overline{M_s},\overline{\omega_s})$ through
$(\overline{\gamma},\overline{B})$ is an orientable double cover
of $(M,\omega)$.
As a consequence of Definition~\ref{defn:cut_space},
$(\overline{M_{cut}},\overline{\omega_{cut}})$
is the symplectic cut space of $(\overline{M},\overline{\omega})$.
By Proposition~\ref{prop:cutting_blowup},
$(\overline{M_s},\overline{\omega_s})$ and
$(\overline{M_{cut}},\overline{\omega_{cut}})$
are symplectomorphic relative to the centers.
It follows that $(M_s,\omega_s)$ and $(M_{cut},\omega_{cut})$
are symplectomorphic relative to the centers.
\end{proof}

\subsection{Radially blowing-up cut pieces}
\label{ss:blowup_cut}

\begin{proposition}
\label{prop:blowingup_unfold}
Let $(M,\omega)$ be an oriented origami manifold
with null fibration $Z \stackrel{\pi}{\to} B$.

Let $(M_1,\omega_1)$ and $(M_2,\omega_2)$ be its symplectic cut pieces,
$B_1$ and $B_2$ the natural symplectic embedded images of $B$ in each
and $\gamma_1 : \cU_1 \to \cU_2$ the symplectomorphism
of tubular neighborhoods of $B_1$ and $B_2$ as in
Remark~\ref{rmk:involution}.

Let $(\widetilde{M},\widetilde{\omega})$ be a radial blow-up
of $(M_1,\omega_1)$ and $(M_2,\omega_2)$ through $(\gamma_1 , B_1)$.

Then $(M,\omega)$ and $(\widetilde{M},\widetilde{\omega})$
are equivalent origami manifolds.
\end{proposition}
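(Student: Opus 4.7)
I would construct an explicit orientation-preserving diffeomorphism $\rho:M\to\widetilde{M}$ with $\rho^*\widetilde{\omega}=\omega$ by reading off both manifolds in Moser coordinates near the fold. Fix a Moser tubular neighborhood $\varphi:Z\times(-\varepsilon,\varepsilon)\to\cU$ of $Z$ in $M$ with $\varphi^*\omega=p^*i^*\omega+d(t^2p^*\alpha)$, as in the proof of Proposition~\ref{pro:cutting}. The cut pieces $(M_1,\omega_1)$ and $(M_2,\omega_2)$ are assembled from $M^\pm$ and $\mu^{-1}(0)/S^1$ using exactly this $\varphi$, and the symplectomorphism $\gamma_1:\cU_1\to\cU_2$ of Remark~\ref{rmk:involution} is induced by $\varphi(x,t)\mapsto\varphi(x,-t)$. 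Consequently both projectivised normal bundles $\cN_i\to B_i$ identify canonically with the null fibration $Z\to B$, and the bundle map $\Gamma:\cN\to\cN$ attached to $\gamma=\gamma_1\sqcup\gamma_1^{-1}$ becomes the fibre-antipodal map combined with swapping the two copies of $Z$.

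Using these identifications I would take as blow-up model the map
\[
   \beta:\cN\times(-\varepsilon,\varepsilon)\longrightarrow\cU_1\sqcup\cU_2
\]
whose restriction to $\cN_1\times(0,\varepsilon)$ is $(z,t)\mapsto j^+\circ\varphi(z,t)$ and whose remaining values are forced by the compatibility properties $\beta(-z,-t)=\beta(z,t)$ and $\gamma\circ\beta=\beta\circ(-\Gamma,-\mathrm{id})$. With this choice the radial blow-up $\widetilde{M}$ is the quotient of $(M_1\sqcup M_2\smallsetminus B)\sqcup\cN\times(-\varepsilon,\varepsilon)$ in which a point $[z,t]$ with $t>0$ is glued to $j^+\circ\varphi(z,t)\in M_1\smallsetminus B_1$, while a point with $t<0$ is glued, after applying $(-\Gamma,-\mathrm{id})$, to the corresponding point of $M_2\smallsetminus B_2$. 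I then define $\rho$ by sending $M\smallsetminus\cU$ identically to $(M_1\smallsetminus\cU^+)\sqcup(M_2\smallsetminus\cU^-)\subset\widetilde{M}$, and sending $\varphi(z,t)\in\cU$ to the class of $(z,t)\in\cN\times(-\varepsilon,\varepsilon)$ in $\widetilde{M}$.

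To complete the argument I would verify that $\rho$ is well-defined, smooth, a global diffeomorphism, orientation-preserving, and pulls back $\widetilde{\omega}$ to $\omega$. Well-definedness across $\cU\smallsetminus Z$ follows from the gluing $[z,t]\sim\beta(z,t)$ for $t>0$ and its analogue for $t<0$; orientation preservation is immediate from Remark~\ref{rmk:orientable} and Proposition~\ref{pro:cutting}, which together identify $\widetilde{M}^\pm\simeq M_i\smallsetminus B_i$ with $M^\pm$ compatibly with the coorientation of $Z$. The form identity $\rho^*\widetilde{\omega}=\omega$ reduces to a tautology on each piece, since $\widetilde{\omega}$ was defined on the blow-up collar by pulling back $\omega_i$ via $\beta$, and $\beta$ was taken to be $\varphi$ in Moser coordinates, so both sides equal $p^*i^*\omega+d(t^2 p^*\alpha)$ on $\cU$.

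The main bookkeeping obstacle I expect is verifying that the identification $(z,t)\sim(-\Gamma(z),-t)$ built into $\widetilde{M}$ introduces no extra identifications beyond those already present in $M$. In Moser coordinates this amounts to the statement that the composition of $t\mapsto -t$ with the fibre-antipodal swap $-\Gamma$ acts trivially on $\cU$, which is precisely the content of the definition of $\gamma_1$ recorded in Remark~\ref{rmk:involution}. Once this compatibility is unwound, independence from the choice of Moser model (up to equivalence) follows, as in the final remark of the proof of Proposition~\ref{pro:cutting}.
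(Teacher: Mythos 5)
Your proof is correct and takes essentially the same route as the paper: fix a Moser model $\varphi$ for a tubular neighborhood of $Z$, identify $\cN$ with $Z$ via the cutting construction, choose the blow-up model so that $\beta$ in Moser coordinates is precisely the reduced-space embedding $(x,t)\mapsto[x,t^2,t\sqrt2]$ (which, as you observe, is the same thing as $j^+\circ\varphi$), and then define $\rho$ piecewise as the identity away from the collar and $\varphi^{-1}$ into $\cN\times(-\varepsilon,\varepsilon)$ inside it. The paper verifies the same commuting diagrams that your well-definedness argument handles, and both proofs rely on the equality $\beta^*\omega_1=\varphi^*\omega$ to get $\rho^*\widetilde{\omega}=\omega$.
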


\begin{proof}
Choose a principal $S^1$ action
$S^1 \hookrightarrow Z \stackrel{\pi}{\to} B$
and let $\varphi : Z \times (-\varepsilon,\varepsilon) \to \cU$
be a {\em Moser model} for a tubular neighborhood $\cU$ of $Z$
in $M$ as in the proof of Proposition~\ref{pro:cutting}.
Let $\cN$ be the radial projectivized normal bundle to $B_1$ in $M_1$.
By Proposition~\ref{pro:cutting},
the natural embedding of $B$ in $M_1$ with image $B_1$
lifts to a bundle isomorphism from $\cN \to B_1$ to $Z \to B$.
Under this isomorphism, we pick the following
blow-up model for the neighborhood $\mu^{-1}(0) / S^1$ of $B_1$
in $(M_1,\omega_1)$:
\[
\begin{array}{rccl}
   \beta : & Z \times (- \varepsilon , \varepsilon)
   & \longrightarrow & \mu^{-1}(0) / S^1 \\
   & (x,t) & \longmapsto & [x,t^2,t\sqrt{2}] \ .
\end{array}
\]
By recalling the construction of the reduced form $\omega_1$
on $\mu^{-1}(0) / S^1$ (see proof of  Proposition~\ref{pro:cutting})
we find that $\beta^* \omega_1 = \varphi^* \omega$.
Hence in this case the origami manifold
$(\widetilde{M},\widetilde{\omega})$ has
\[
   \widetilde{M} = \left( M_1 \smallsetminus B_1 \bigcup
   \overline{M_2 \smallsetminus B_2} \bigcup
   Z \times (-\varepsilon,\varepsilon) \right) \, / \, \sim
\]
where we quotient identifying
\[
   Z \times (0,\varepsilon) \stackrel{\beta}{\simeq}
   \mu^{-1}(0) / S^1 \subset \cU_1 \smallsetminus B_1
\]
and
\[
   Z \times (-\varepsilon,0) \stackrel{\beta}{\simeq}
   \overline{\mu^{-1}(0) / S^1} \stackrel{\gamma}{\simeq} 
   \overline{\mu^{-1}(0) / S^1} \subset \overline{\cU_2 \smallsetminus B_2}
\]
and we have
\[
   \widetilde{\omega}:=\left\{
   \begin{array}{ll}
   \omega_1&\text{ on } M_1 \smallsetminus B_1\\ 
   \omega_2&\text{ on } M_2 \smallsetminus B_2\\
   \beta^* \omega_1&\text{ on }  Z \times (-\varepsilon,\varepsilon) \ .
   \end{array}\right.
\]
The symplectomorphisms (from the proof of Proposition~\ref{pro:cutting})
$\overline{j^+} : M^+ \to M_1 \smallsetminus B_1$
and 
$\overline{j^-} : M^- \to M_2 \smallsetminus B_2$
extending $\varphi(x,t)\mapsto\left[x,t^2,t\sqrt{2}\right]$
make the following diagrams (one for $t>0$, the other for $t<0$) commute:
\[
\begin{array}{rcl}
   M^+ \supset \cU^+
   & \stackrel{j^+}{\longrightarrow} &
   \mu^{-1}(0) / S^1 \subset M_1 \smallsetminus B_1 \\
   \varphi \nwarrow & & \nearrow \beta \\
   & Z \times (0,\varepsilon)
\end{array}
\]
and
\[
\begin{array}{rcl}
   M^- \supset \cU^-
   & \stackrel{j^-}{\longrightarrow} &
   \overline{\mu^{-1}(0) / S^1}
   \subset \overline{M_2 \smallsetminus B_2} \\
   \varphi \nwarrow & & \nearrow \beta \\
   & Z \times (-\varepsilon,0)
\end{array}
\]
Therefore, the maps $\overline{j^+}$, $\overline{j^-}$ and $\varphi^{-1}$
together define a diffeomorphism from $M$ to $\widetilde{M}$
pulling back $\widetilde{\omega}$ to $\omega$.
\end{proof}

\begin{corollary}
\label{coroll:blowingup_unfold}
Let $(M,\omega)$ be an origami manifold
with null fibration $Z \stackrel{\pi}{\to} B$.

Let $(M_{cut},\omega_{cut})$ be its symplectic cut space,
$B_{cut}$ the natural symplectic embedded image of $B$ in $M_{cut}$
and $\gamma : \cU \to \cU$ a symplectomorphism
of a tubular neighborhood $\cU$ of $B_{cut}$ as in Remark~\ref{rmk:involution}.

Let $(\widetilde{M},\widetilde{\omega})$ be a radial blow-up
of $(M_{cut},\omega_{cut})$ through $(\gamma,B_{cut})$.

Then $(M,\omega)$ and $(\widetilde{M},\widetilde{\omega})$
are symplectomorphic origami manifolds.
\end{corollary}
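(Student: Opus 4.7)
The plan is to reduce to the orientable case already handled in Proposition~\ref{prop:blowingup_unfold} by passing to the orientable double cover, applying that proposition upstairs, and then descending the equivalence to the $\ZZ_2$-quotient.

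First I would form the orientable double cover $(\overline{M},\overline{\omega})$ of the origami manifold $(M,\omega)$, with null fibration $S^1 \hookrightarrow \overline{Z} \to \overline{B}$ consisting of two disjoint copies of $Z \to B$ (or a connected double cover thereof, depending on the coorientability of $Z$). By Proposition~\ref{pro:cutting} applied to $(\overline{M},\overline{\omega})$, its symplectic cut pieces are a pair $(\overline{M_s},\overline{\omega_s})$ whose $\ZZ_2$-quotient under the deck involution is, by Definition~\ref{defn:cut_space}, exactly $(M_{cut},\omega_{cut})$. The natural symplectomorphism $\overline{\gamma}$ between tubular neighborhoods of the embedded copies $\overline{B}_1,\overline{B}_2 \subset \overline{M_s}$ (Remark~\ref{rmk:involution}) is the $\ZZ_2$-equivariant lift of $\gamma$ in the sense of Lemma~\ref{lemma:double_cover}.

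Next I would apply Proposition~\ref{prop:blowingup_unfold} to $(\overline{M},\overline{\omega})$: it yields an equivalence of origami manifolds
\[
   \overline{\rho} : \overline{M} \longrightarrow \widetilde{\overline{M}},
\]
where $(\widetilde{\overline{M}},\widetilde{\overline{\omega}})$ is the radial blow-up of $(\overline{M_s},\overline{\omega_s})$ through $(\overline{\gamma},\overline{B})$. By Lemma~\ref{lemma:double_cover}, this blow-up is precisely the orientable double cover of $(\widetilde{M},\widetilde{\omega})$, the radial blow-up of $(M_{cut},\omega_{cut})$ through $(\gamma,B_{cut})$.

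Finally I would verify that $\overline{\rho}$ is equivariant with respect to the two deck involutions (the one on $\overline{M}$ covering $M$, and the one on $\widetilde{\overline{M}}$ covering $\widetilde{M}$), so that it descends to a diffeomorphism $\rho: M \to \widetilde{M}$ with $\rho^*\widetilde{\omega}=\omega$. Equivariance is built in because every ingredient used in the construction of $\overline{\rho}$ in the proof of Proposition~\ref{prop:blowingup_unfold}, namely the Moser model $\varphi$, the blow-up model $\beta(x,t)=[x,t^2,t\sqrt{2}]$, and the symplectomorphisms $\overline{j^\pm}$, arises from the pullback along the covering map of the corresponding objects downstairs on $M$, $M_{cut}$ and the tubular neighborhood $\cU$; thus each intertwines the deck involutions by construction. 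The main obstacle is precisely this bookkeeping: one must choose the Moser chart and blow-up model on $\overline{M_s}$ to be the pullbacks of a chart and model chosen downstairs, and then check that the two identifications used to glue $\widetilde{\overline{M}}$ are compatible with the involutions $-\overline{\Gamma}$ and $-\Gamma$ respectively, so that passing to the $\ZZ_2$-quotient produces a well-defined equivalence $(M,\omega)\simeq(\widetilde{M},\widetilde{\omega})$.
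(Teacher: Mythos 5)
Your proof takes essentially the same route as the paper: pass to orientable double covers, apply Proposition~\ref{prop:blowingup_unfold} upstairs, invoke Lemma~\ref{lemma:double_cover} to identify the blow-up of the double cover of $M_{cut}$ with the double cover of $\widetilde{M}$, and descend. You are more explicit than the paper about the final step — that the equivalence $\overline{\rho}$ must be checked to intertwine the two deck involutions so that it descends to $M \to \widetilde{M}$ — which the paper leaves implicit; spelling this out is a genuine improvement in rigor rather than a different argument.
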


\begin{proof}
We pass to the orientable double covers.
By Proposition~\ref{prop:blowingup_unfold}, the orientable
double cover of $(M,\omega)$ is symplectomorphic to the blow-up
of its cut space.
By definition, the cut space of the double cover
of $(M,\omega)$ is the double cover of $(M_{cut},\omega_{cut})$.
By Lemma~\ref{lemma:double_cover}, the blow-up of the latter
double cover is the double cover of $(\widetilde{M},\widetilde{\omega})$.
\end{proof}


\section{Origami Polytopes}
\label{sec:polytopes}


\subsection{Origami convexity}
\label{sec:convexity}

\begin{definition}
\label{defn:agrees}
If $F_i$ is a face of a polytope $\Delta_i$, $i=1,2$, in $\RR^n$,
we say that $\Delta_1$ near $F_1$ \textit{agrees} with $\Delta_2$ near $F_2$
when $F_1 = F_2$ and there is an open subset $\cU$ of $\RR^n$
containing $F_1$ such that $\cU \cap \Delta_1 = \cU \cap \Delta_2$.
\end{definition}

The following is an origami analogue of the
Atiyah-Guillemin-Sternberg convexity theorem.

\begin{theorem}
\label{thm:convexity}
Let $(M,\omega,G,\mu)$ be a connected compact origami manifold
with null fibration $Z \stackrel{\pi}{\to} B$
and a hamiltonian action of an $m$-dimensional torus $G$
with moment map $\mu : M \to \fg^*$.
Then:

\begin{itemize}
\item[(a)]
The image $\mu (M)$ of the moment map is the union of
a finite number of convex polytopes $\Delta_i$, $i=1,\ldots,N$,
each of which is the image of the moment map restricted
to the closure of a connected component of $M \smallsetminus Z$.

\item[(b)]
Over each connected component $Z'$ of $Z$,
the null fibration is given by a subgroup of $G$
if and only if $\mu(Z')$
is a facet of each of the one or two polytopes corresponding to the
neighboring component(s) of $M \smallsetminus Z$, and when those are
two polytopes, they agree near the facet $\mu (Z')$.
\end{itemize}
\end{theorem}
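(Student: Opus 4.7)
The plan is to reduce both assertions to the classical Atiyah--Guillemin--Sternberg (AGS) convexity theorem via the cutting construction of Proposition~\ref{pro:cutting}. Because $G$ preserves $\omega$, it preserves $Z$ and the null line field $V\subset TZ$. An equivariant Moser trick, obtained by averaging the $S^1$-connection $\alpha$ in the tubular model over the compact group $G$, makes the diffeomorphism $\varphi : Z\times(-\varepsilon,\varepsilon)\to\cU$ $G$-equivariant; the cutting circle commutes with $G$ since $G$ preserves the null fibration, so symplectic reduction produces a hamiltonian $G$-action on each symplectic cut piece $(M^\pm_0,\omega^\pm_0)$ with moment map $\mu^\pm_0$ extending $\mu|_{M^\pm}$ and satisfying $\mu^\pm_0(j(b))=\mu(z)$ for every $z\in\pi^{-1}(b)\subset Z$.

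For (a), passing to the orientable double cover if necessary, the symplectic cut space is a finite disjoint union of compact connected hamiltonian $G$-manifolds, one per connected component of $M\smallsetminus Z$ (finitely many since $Z$ is a compact hypersurface in compact $M$). The classical AGS theorem gives a convex moment polytope $\Delta_j$ for each piece, and, since $\mu$ on the closure of a component of $M\smallsetminus Z$ has image equal to that of $\mu^\pm_0$ on the corresponding cut piece,
\[
   \mu(M)=\bigcup_j\mu\bigl(\overline{(M\smallsetminus Z)_j}\bigr)=\bigcup_j\Delta_j.
\]

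For the forward direction of (b), assume the null fibration over $Z'$ is the orbit foliation of $S^1_\xi=\exp(\RR\xi)\subset G$ for some primitive $\xi\in\fg$. In the equivariant cutting construction $S^1_\xi$ is precisely the circle one quotients by, so in each neighboring cut piece $M^\pm_0$ the center $B$ is the fixed set of $S^1_\xi$ and $\langle\mu^\pm_0,\xi\rangle$ attains its extreme value $c:=\langle\mu(Z'),\xi\rangle$ exactly on $B$. Hence $\{\langle\cdot,\xi\rangle=c\}$ is a supporting hyperplane of $\Delta^\pm$ and
\[
   \Delta^\pm\cap\{\langle\cdot,\xi\rangle=c\}=\mu^\pm_0(B)=\mu(Z')
\]
is a facet, by AGS applied to $B$ as a hamiltonian $(G/S^1_\xi)$-space. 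Local agreement of $\Delta^+$ and $\Delta^-$ near $\mu(Z')$ follows from the $t\leftrightarrow -t$ symmetry of the equivariant Moser model together with Remark~\ref{rmk:involution}: the two cut pieces are $G$-equivariantly symplectomorphic on tubular neighborhoods of their centers, so the two polytopes coincide on a neighborhood of the common facet.

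Conversely, suppose $\mu(Z')$ is a facet of each neighboring polytope with common primitive inner normal $\xi\in\fg$. By AGS applied in the cut pieces, $(\mu^\pm_0)^{-1}(\mu(Z'))=B$ is a connected component of the fixed set of $S^1_\xi\subset G$. Pulling back to $M$, the circle $S^1_\xi$ preserves $Z'$ and covers the trivial action on $B$, hence acts fiberwise along the orbits of the null fibration $Z'\to B$; since $S^1_\xi$ acts nontrivially on each cut piece (else $\Delta^\pm$ would lie in the hyperplane $\langle\cdot,\xi\rangle=c$, precluding it from having $\mu(Z')$ as a facet), the induced homomorphism from $S^1_\xi$ into the structure circle of the null fibration is nonzero, and primitivity of $\xi$ forces it to be an isomorphism. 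Therefore the null fibration over $Z'$ is given by $S^1_\xi\subseteq G$. The main obstacle I expect is getting the equivariant cutting setup right in the opening paragraph, since that is what aligns $\mu$ with $\mu^\pm_0$, identifies $B$ with the $S^1_\xi$-fixed set in the cut pieces, and supplies the $t\leftrightarrow -t$ symmetry behind the agreement of polytopes; once this is in place, parts (a) and (b) follow from AGS and a short analysis of how a primitive circle subgroup can act on the structure circle of the null fibration.
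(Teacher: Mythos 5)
Your overall strategy — reduce to Atiyah--Guillemin--Sternberg by cutting along $Z$ with the averaged null-fibration circle — is exactly the paper's strategy for part~(a), and your forward direction of part~(b) is likewise in the paper's spirit. But the converse of~(b) is where the approaches diverge, and your version has two genuine gaps. The paper handles the converse by contrapositive: it cuts the $G \times S^1$-space with moment map $(\mu, t^2)$, obtaining a polytope $\widetilde\Delta \subset \fg^* \times \RR$ with a distinguished facet $\widetilde F_{Z'}$ coming from $Z'$, and then shows that when the first-factor projection $\pi|_{\widetilde\Delta}$ fails to be injective, $\pi(\widetilde F_{Z'})$ cannot be a facet of $\Delta_1$ (so the null fibration circle does not lie in $G$), while when $\pi|_{\widetilde\Delta}$ is injective the normal to the containing hyperplane produces a circle in $S^1 \times G$ acting trivially and surjecting onto the $S^1$-factor, which is what realizes the null fibration inside $G$. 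You instead assume $\mu(Z')$ is a facet with ``common primitive inner normal $\xi \in \fg$'' and run the argument directly through $S^1_\xi$.

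The first gap: you quietly take the conormal to be an integral vector in $\fg$. That moment polytopes of arbitrary hamiltonian torus actions have rational facets is true but is exactly the kind of thing that needs to be said (it follows because an irrational facet would be contained in the image of a fixed-point set of a subtorus of dimension $\geq 2$, which has codimension $\geq 2$). The second and more serious gap is the final sentence: ``primitivity of $\xi$ forces [the homomorphism $S^1_\xi \to S^1$] to be an isomorphism.'' Primitivity of $\xi$ constrains only the parametrization of $S^1_\xi$ inside $G$; it says nothing about the degree with which $S^1_\xi$ rotates the fibers of $Z' \to B$. A priori the nonzero homomorphism $\rho : S^1_\xi \to S^1$ could have kernel $\ZZ/k$ with $k > 1$, in which case $S^1_\xi$ has the right orbits but does not act freely and therefore does not give the principal $S^1$-structure of the null fibration (which is what the paper needs for the cutting construction). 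You would need to rule out $k > 1$ — e.g.\ by noting that $\ker\rho$ would act trivially on a neighborhood of $Z'$ and hence on all of $M$ — and even that argument requires an effectivity hypothesis that the theorem does not explicitly make. The paper's $\fg^* \times \RR$ construction sidesteps both issues: rationality of the relevant circle and the absence of a finite kernel both drop out of the analysis of $\widetilde\Delta$ and the way its supporting hyperplane interacts with the $S^1$-factor. Finally, you do not address the noncoorientable components of $Z$ (only one neighboring polytope), which the paper covers by passing to the orientable double cover.
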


We call such images $\mu (M)$ \textit{origami polytopes}.

\begin{remark}
When $M$ is oriented, the facets from part~(b) are always
shared by {\em two} polytopes.
In general, a component $Z'$ is coorientable if and only if
$\mu (Z')$ is a facet of two polytopes.
\end{remark}

\begin{proof}
\begin{itemize}
\item[(a)]
Since the $G$-action preserves $\omega$, it also preserves each 
connected component of the folding hypersurface $Z$
and its null foliation $V$ .
Choose an oriented trivializing section $w$ of $V$.
Average $w$ so that it is $G$-invariant, i.e., replace it with
\[
   \frac{1}{|G|} \int_G g_* \left( w_{g^{-1}(p)} \right) \, dg \ .
\]
Next, scale it uniformly over each orbit
so that its integral curves all have period $2 \pi$,
producing a vector field $v$ which generates an action of 
$S^1$ on $Z$ that commutes with the $G$-action.
This $S^1$-action also preserves the moment map $\mu$: 
for any $X\in\fg$ with corresponding 
vector field $X^\#$ on $M$, we have over $Z$
\[
   \cL_v \langle \mu , X \rangle
   = \imath_v d \langle \mu , X \rangle
   = - \imath_v \imath_{X^\#} \omega = \omega (v,X^\#) = 0 \ .
\]

Using this $v$, the cutting construction from
Section~\ref{sec:origami} has a hamiltonian version.
Let $(M_i,\omega_i)$, $i=1,\ldots,N$,
be the resulting compact connected components
of the symplectic cut space.
Let $B_i$ be the union of the components of the base $B$
which naturally embed in $M_i$.
Each $M_i \smallsetminus B_i$ is symplectomorphic to a connected 
component $\cW_i\subset M \smallsetminus Z$ and
$M_i$ is the closure of $M_i \smallsetminus B_i$.
Each $(M_i,\omega_i)$ inherits a hamiltonian action of $G$
with moment map $\mu_i$ which matches $\mu|_{\cW_i}$ over 
$M_i \smallsetminus B_i$ and is the well-defined $S^1$-quotient 
of $\mu|_Z$ over $B_i$.

By the Atiyah-Guillemin-Sternberg convexity
theorem~\cite{at:convexity,gu-st:convexity}, each 
$\mu_i (M_i)$ is a convex polytope $\Delta_i$.
Since $\mu (M)$ is the union of the $\mu_i (M_i)$, 
we conclude that
\[
   \mu (M) = \bigcup_{i=1}^N \Delta_i \ .
\]

\item[(b)]
Assume first that $M$ is orientable.

Let $Z'$ be a connected component of $Z$ with null fibration
$Z' \to B'$.
Let $\cW_1$ and $\cW_2$ be the two neighboring
components of $M \smallsetminus Z$ on each side of $Z'$,
$(M_1,\omega_1,G,\mu_1)$ and $(M_2,\omega_2,G,\mu_2)$
the corresponding cut spaces with moment polytopes
$\Delta_1$ and $\Delta_2$.

Let $\cU$ be a $G$-invariant tubular neighborhood of $Z'$ with
a $G$-equivariant diffeomorphism
$\varphi : Z' \times (-\varepsilon,\varepsilon) \to \cU$ such that
\[
   \varphi^* \omega = p^* i^* \omega
   + d \left( t^2 p^* \alpha \right) \ ,
\]
where $G$ acts trivially on $(-\varepsilon,\varepsilon)$,
$p: Z' \times (-\varepsilon,\varepsilon) \to Z'$
is the projection onto the first factor,
$t$ is the real coordinate on the interval $(-\varepsilon,\varepsilon)$
and $\alpha$ is a $G$-invariant $S^1$-connection on $Z'$
for a chosen principal $S^1$ action,
$S^1 \hookrightarrow Z \stackrel{\pi}{\to} B$.
The existence of such $\varphi$
follows from an equivariant Moser trick,
analogous to that in the proof of Proposition~\ref{pro:cutting}.

Without loss of generality,
$Z' \times (0,\varepsilon)$ and $Z' \times (-\varepsilon,0)$
correspond via $\varphi$ to the two sides $\cU_1 := \cU \cap \cW_1$
and $\cU_2 := \cU \cap \cW_2$, respectively.
The involution $\tau : \cU \to \cU$ translating
$t \mapsto -t$ in  $Z' \times (-\varepsilon,\varepsilon)$
is a $G$-equivariant (orientation-reversing) diffeomorphism preserving $Z'$,
switching $\cU_1$ and $\cU_2$ but preserving $\omega$.
Hence the moment map satisfies $\mu \circ \tau = \mu$
and $\mu (\cU_1) = \mu (\cU_2)$.

When the null fibration is given by a subgroup of $G$,
we cut the $G$-space $\cU$ at the level $Z'$.
The image $\mu(Z')$ is the intersection of $\mu(\cU)$ with a hyperplane
and thus a facet of both $\Delta_1$ and $\Delta_2$.

Each $\cU_i \cup B'$ is equivariantly symplectomorphic to
a neighborhood $\cV_i$ of $B'$ in $(M_i,\omega_i,G,\mu_i)$
with $\mu_i (\cV_i) = \mu(\cU_i) \cup \mu (Z')$, $i=1,2$.
As a map to its image, the moment map is open~\cite{ka-ma:convexity}.
Since $\mu_1 (\cV_1) = \mu_2 (\cV_2)$, we conclude that
$\Delta_1$ and $\Delta_2$ agree near the facet $\mu(Z')$.

For general null fibration, we cut the $G \times S^1$-space $\cU$
with moment map $(\mu, t^2)$ at $Z'$, the $S^1$-level $t^2 = 0$.
The image of $Z'$ by the $G \times S^1$-moment map
is the intersection of the image of the full $\cU$ with a hyperplane.
We conclude that the image $\mu (Z')$ is the first factor projection
$\pi : \fg^* \times \RR \to \fg^*$ of a facet of
a polytope $\widetilde \Delta$
in $\fg^* \times \RR$, so it can be of codimension zero or one;
see Example~\ref{ex:notfacet}.

If $\pi|_{\widetilde \Delta}:\widetilde \Delta \to \Delta_1$ is one-to-one,
then facets of $\widetilde \Delta$ map to facets of $\Delta_1$
and $\widetilde \Delta$ is contained in a hyperplane
surjecting onto $\fg^*$.
The normal to that hyperplane corresponds to a circle subgroup
of $G \times S^1$ acting trivially on $\cU$ and surjecting onto the
$S^1$-factor.
This allows us to express the $S^1$-action in terms of a subgroup of $G$.

If $\pi|_{\widetilde \Delta}:\widetilde \Delta \to \Delta_1$
is not one-to-one, it cannot map the facet $\widetilde F_{Z'}$ of 
$\widetilde \Delta$ corresponding to $Z'$ to a facet of $\Delta_1$:
Otherwise, $\widetilde F_{Z'}$ would contain nontrivial 
{\em vertical} vectors $(0,x)\in\fg^* \times \RR$ 
which would contradict the fact that the $S^1$ direction is that
of the null fibration on $Z'$.
Hence, the normal to $\widetilde F_{Z'}$ in $\widetilde \Delta$
must be transverse to $\fg^*$,
and the corresponding null fibration circle subgroup is not
a subgroup of $G$.

When $M$ is not necessarily orientable, we consider its
orientable double cover and lift the hamiltonian torus action.
The lifted moment map is the composition of the two-to-one projection
with the original double map, and the result follows.
\end{itemize}
\end{proof}

\begin{example}
\label{ex:toric_sphere}
Consider $(S^4, \omega_0, \TT^2, \mu)$ where
$(S^4, \omega_0)$ is a sphere as in Example~\ref{ex:spheres} 
with $\TT^2$ acting by
\[
   (e^{i\theta_1},e^{i\theta_2}) \cdot
   \underbrace{(z_1,z_2,h)}_{\in \CC^2 \times \RR \simeq \RR^5}
   = (e^{i\theta_1} z_1 ,e^{i\theta_2} z_2 ,h)
\]
and moment map defined by
\[
   \mu (z_1,z_2,h) = \Big( \textstyle{\underbrace{\frac{|z_1|^2}{2}}_{x_1} ,
   \underbrace{\frac{|z_2|^2}{2}}_{x_2}} \Big)
\]
whose image is the triangle
$x_1 \geq 0$, $x_2 \geq 0$, $x_1+x_2 \leq \frac 12$.
The image $\mu (Z)$ of the folding hypersurface (the equator)
is the hypotenuse.

\begin{figure}[ht]
\begin{center}
\includegraphics[scale=.6]{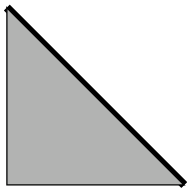}
\caption{An origami polytope for a 4-sphere}
\label{fig:plain_triangle}
\end{center}
\end{figure}
The null foliation is the Hopf fibration given by the
diagonal circle subgroup of $\TT^2$.
In this case, Theorem~\ref{thm:convexity} says
that the triangle is the union of two identical
triangles, each of which is the moment polytope
of one of the $\CC \PP^2$'s obtained by cutting;
see Example~\ref{ex:cutting_spheres}.
Likewise, if $(S^4, \omega_0, \TT^2, \mu)$ was blown-up
at a pole, the triangle in Figure~\ref{fig:plain_triangle} would be the 
superposition of the same triangle with a trapezoid.
\end{example}

\begin{example}
\label{ex:notfacet}
Consider $(S^2 \times S^2, \omega_s \oplus \omega_f, S^1, \mu)$,
where $(S^2 , \omega_s)$ is a standard symplectic sphere,
$(S^2 , \omega_{f})$ is a folded symplectic sphere
with folding hypersurface given by a parallel,
and $S^1$ acts as the diagonal of the standard rotation action of
$S^1 \times S^1$ on the product manifold.
Then the moment map image is a line segment and the image of the
folding hypersurface is a nontrivial subsegment.
Indeed, the image of $\mu$ is a $45^o$ projection of the image
of the moment map for the full $S^1 \times S^1$ action, i.e.,
a rectangle in which the folding hypersurface surjects
to one of the sides; see Figure~\ref{fig:projection}.

\begin{figure}[ht]
\begin{center}
\includegraphics[scale=.3]{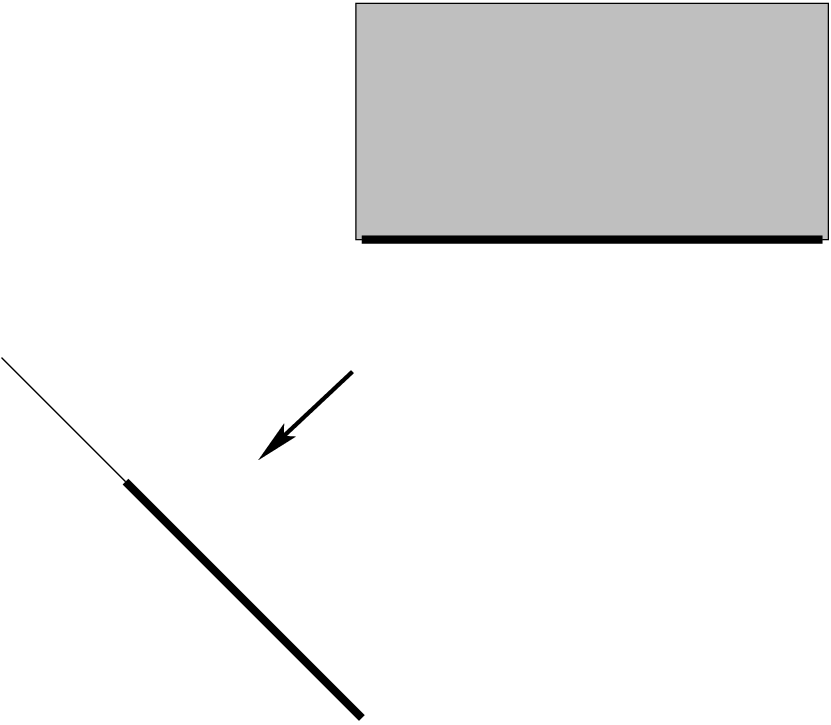}
\caption{Origami polytopes for a product of two 2-spheres}
\label{fig:projection}
\end{center}
\end{figure}
By considering the first or second factors of $S^1 \times S^1$ alone,
we get the two extreme cases in which the image of the folding hypersurface
is either the full line segment or simply one of the boundary points.

The analogous six-dimensional examples
$(S^2 \times S^2 \times S^2,
\omega_s \oplus \omega_s \oplus \omega_f, \TT^2 , \mu)$
produce moment images which are rational projections of a cube, with
the folding hypersurface mapped to rhombi; see Figure~\ref{fig:rhombus}.

\begin{figure}[ht]
\begin{center}
\includegraphics[scale=.24]{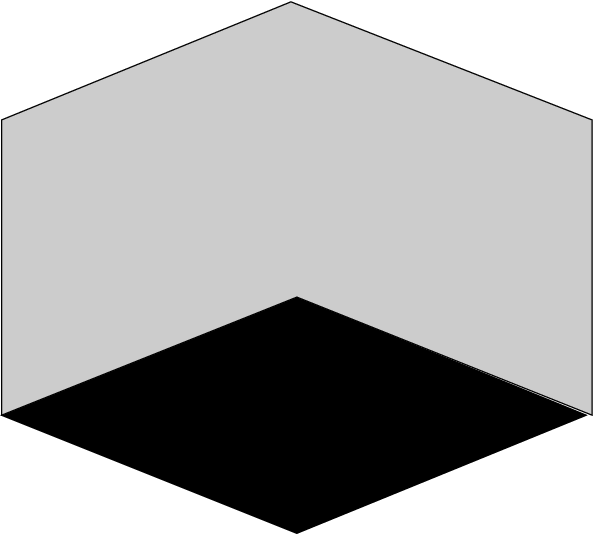}
\caption{An origami polytope for a product of three 2-spheres}
\label{fig:rhombus}
\end{center}
\end{figure}
\end{example}


\subsection{Toric case}
\label{ss:toric}

\begin{definition}
A \textit{toric origami manifold} $(M,\omega,G,\mu)$
is a compact connected origami manifold
$(M,\omega)$ equipped with an effective hamiltonian action of a
torus $G$ with $\dim G = \frac 12 \dim M$
and with a choice of a corresponding moment map $\mu$.
\end{definition}

For a toric origami manifold $(M,\omega,G,\mu)$,
orbits with trivial isotropy -- the {\em principal orbits} --
form a dense open subset of $M$~\cite[p.179]{br:transformation}.
Any {\em coorientable} connected component $Z'$ of $Z$
has a $G$-invariant tubular neighborhood
modelled on $Z' \times (-\varepsilon,\varepsilon)$ with a
$G \times S^1$ hamiltonian action having moment map $(\mu, t^2)$.
As the orbits are isotropic submanifolds, the principal orbits
of the $G \times S^1$-action must still have dimension $\dim G$.
Their stabilizer must be a one-dimensional compact
connected subgroup surjecting onto $S^1$.
Hence, over those connected components of $Z$,
the null fibration is given by a subgroup of $G$.
A similar argument holds for {\em noncoorientable} connected 
components of $Z$, using orientable double covers.
We have thus proven the following corollary of Theorem~\ref{thm:convexity}.

\begin{corollary}
\label{cor:agreeing}
When $(M,\omega,G,\mu)$ is a toric origami manifold,
the moment map image of each connected component $Z'$ of $Z$
is a facet of each of the one or two polytopes corresponding to the
neighboring component(s) of $M \smallsetminus Z$,
and when those are two polytopes, they agree near the facet $\mu (Z')$.
\end{corollary}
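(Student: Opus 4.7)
The plan is to deduce this corollary directly from Theorem~\ref{thm:convexity}(b). According to that theorem, the facet conclusion (and the agreement of neighboring polytopes) holds at a component $Z'$ of $Z$ precisely when the null fibration over $Z'$ is realized by a subgroup of $G$. So the work is to verify, in the toric setting, that this hypothesis is automatically satisfied at every component of the fold.

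First I would reduce to the coorientable case. If $Z'$ is not coorientable, lift everything to the orientable double cover $(\overline M, \overline \omega)$ and lift the $G$-action; the double cover is still a toric origami manifold with the same torus $G$, and $Z'$ lifts to a coorientable component. Any circle subgroup of $G$ realizing the null fibration downstairs is obtained from one upstairs, so it suffices to handle the coorientable case.

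Next, given a coorientable component $Z'$, take a $G$-invariant tubular neighborhood $\cU \simeq Z' \times (-\varepsilon,\varepsilon)$ as in the proof of Theorem~\ref{thm:convexity}. On $\cU$ there is a commuting hamiltonian action of $G\times S^1$, where the $S^1$ factor is generated by the averaged/rescaled null vector field $v$ constructed in that proof, with moment map $(\mu, t^2)$. I would then exploit the dimension hypothesis $\dim G = \tfrac12\dim M$ of toric origami: principal $G$-orbits are open and dense, have dimension $\dim G$, and (since $G$ preserves $\omega$ and the action is hamiltonian) are isotropic with respect to $\omega$. Because $v$ lies in the kernel of $\omega|_Z$, the $G\times S^1$-orbits in $\cU$ are still isotropic for $\omega$, so they still have dimension at most $\dim G$. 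Thus the principal $G\times S^1$-orbits have dimension exactly $\dim G$, and their stabilizer is a one-dimensional compact connected subgroup $H$ of $G\times S^1$.

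The key step is analyzing $H$. Since the $G$-action has trivial principal stabilizer, $H$ meets $G\times\{1\}$ trivially, so the projection $H\to S^1$ is an isomorphism of circles. The inverse of this isomorphism embeds $S^1$ as a circle subgroup $K\subset G\times S^1$ acting trivially on the open dense set of principal orbits, hence trivially on all of $\cU$. Projecting $K$ to $G$ yields a circle subgroup of $G$ whose action on $\cU$ coincides with the $S^1$-action generating the null fibration. This verifies the hypothesis of Theorem~\ref{thm:convexity}(b) at $Z'$, and the corollary follows from the conclusion of that theorem applied to each component $Z'$ of $Z$.

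The main obstacle is the clean justification of the isotropy/dimension count for $G\times S^1$-orbits along $Z'$ and the verification that the resulting circle $H\subset G\times S^1$ really projects isomorphically onto the $S^1$ factor; everything else is bookkeeping and a citation of Theorem~\ref{thm:convexity}(b).
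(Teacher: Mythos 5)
Your proposal is correct and follows essentially the same route as the paper: reduce to the coorientable case via double covers, use the $G\times S^1$ hamiltonian action on a Moser tubular neighborhood of $Z'$ with moment map $(\mu,t^2)$, observe that isotropy forces principal $G\times S^1$-orbits to have dimension $\dim G$, and conclude that the one-dimensional stabilizer surjects onto the $S^1$-factor, so the null fibration is given by a subgroup of $G$, making Theorem~\ref{thm:convexity}(b) applicable. You merely spell out in a bit more detail than the paper why the stabilizer meets $G\times\{1\}$ trivially and hence projects isomorphically onto $S^1$.
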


{\em Delzant spaces}, also know as {\em symplectic toric manifolds},
are closed symplectic $2n$-dimensional manifolds
equipped with an effective hamiltonian action of an $n$-dimensional
torus and with a corresponding moment map.
Delzant's theorem~\cite{de:hamiltoniens} says that the image
of the moment map (a polytope in $\RR^n$) determines the Delzant space
(up to an equivariant symplectomorphism intertwining the moment maps).
The {\em Delzant conditions} on polytopes are conditions
characterizing exactly those polytopes that occur as moment
polytopes for Delzant spaces.
A polytope in $\RR^n$ is \textit{Delzant} if:
\begin{itemize}
\item
there are $n$ edges meeting at each vertex;
\item
each edge meeting at vertex $p$ is of the form
$p + tu_i$, $t \geq 0$, where $u_i \in \ZZ^n$;
\item
for each vertex, the corresponding $u_1, \ldots, u_n$ can be
chosen to be a $\ZZ$-basis of $\ZZ^n$.
\end{itemize}

Corollary~\ref{cor:agreeing} says that for a toric origami
manifold $(M,\omega,G,\mu)$ the image $\mu(M)$ is the superimposition
of Delzant polytopes with certain compatibility conditions.
Section~\ref{ss:classification} will show how all such
(compatible) superimpositions occur and, in fact, classify
toric origami manifolds.

For a Delzant space, $G$-equivariant symplectic
neighborhoods of connected components of the orbit-type strata
are simple to infer just by looking at the polytope.

\begin{lemma}
\label{lem:delzant}
Let $G=\TT^n$ be an $n$-dimensional torus and $(M_i^{2n},\omega_i,\mu_i)$,
$i=1,2$, two symplectic toric manifolds.
If the moment polytopes $\Delta_i:=\mu_i(M_i)$ agree near facets
$F_1 \subset \mu_1(M_1)$ and $F_2 \subset \mu_2(M_2)$,
then there are $G$-invariant neighborhoods $\cU_i$ of
$B_i = \mu_i^{-1} (F_i)$, $i=1,2$, with a $G$-equivariant
symplectomorphism $\gamma : \cU_1 \to \cU_2$ extending a
symplectomorphism $B_1 \to B_2$ and such that $\gamma^* \mu_2 = \mu_1$.
\end{lemma}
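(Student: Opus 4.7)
The plan is to combine Delzant's classification with the equivariant symplectic neighborhood theorem. Since the polytopes $\Delta_1$ and $\Delta_2$ agree near $F_1 = F_2 =: F$, the two facets share the same primitive inward normal $\nu \in \ZZ^n$; let $S^1_\nu \subset G$ be the corresponding circle subgroup, which is exactly the stabilizer of points in $B_i$.

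First I would identify $B_i = \mu_i^{-1}(F)$ with a symplectic toric manifold for the quotient torus $G / S^1_\nu$, whose moment polytope (after translating by a point of $F$) is the $(n-1)$-dimensional Delzant polytope $F$. By Delzant's theorem for the quotient torus, there exists a $(G/S^1_\nu)$-equivariant symplectomorphism $\phi : B_1 \to B_2$ intertwining the restricted moment maps. Since $S^1_\nu$ acts trivially on both $B_1$ and $B_2$, the map $\phi$ is automatically $G$-equivariant, and since the facet $F$ is literally the same subset of $\fg^*$, it intertwines $\mu_1|_{B_1}$ and $\mu_2|_{B_2}$.

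Next I would invoke the equivariant symplectic tubular neighborhood theorem (the Marle--Guillemin--Sternberg local normal form) applied to the compact $G$-invariant symplectic submanifold $B_i \subset M_i$. It asserts that a $G$-invariant neighborhood of $B_i$ is determined, up to a moment-map-preserving $G$-equivariant symplectomorphism, by $(B_i, \omega_i|_{B_i}, \mu_i|_{B_i})$ together with the symplectic normal bundle to $B_i$ as a $G$-equivariant hermitian line bundle. In the Delzant setting this normal bundle is a complex line bundle on which $S^1_\nu$ acts with weight one and on which the full $G$-action is encoded by the germ of $\Delta_i$ near $F$: explicitly, a neighborhood of $B_i$ can be realized by symplectic reduction from the local Delzant model depending only on that germ. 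Because $\Delta_1$ and $\Delta_2$ coincide on an open set around $F$, these two equivariant models are canonically identified, so the normal bundles match under $\phi$.

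Putting these ingredients together, the $G$-equivariant bundle isomorphism covering $\phi$ yields a diffeomorphism $\gamma : \cU_1 \to \cU_2$ between $G$-invariant tubular neighborhoods, and the two pullback symplectic forms on $\cU_1$ agree on $B_1$ and have the same restriction to the normal bundle; an equivariant Moser argument relative to $B_1$ then upgrades $\gamma$ to a genuine $G$-equivariant symplectomorphism, after possibly shrinking $\cU_1$. The condition $\gamma^* \mu_2 = \mu_1$ holds on $B_1$ by construction and propagates to all of $\cU_1$ because both sides are $G$-equivariant moment maps for the same symplectic form and $\fg$ has no nonzero $G$-invariants to spoil uniqueness up to constants, the constant being fixed by agreement on $B_1$. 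The main obstacle is the bookkeeping in matching the equivariant normal bundles; this is where the hypothesis that $\Delta_1$ and $\Delta_2$ agree \emph{near} (rather than merely along) the facets is essential, since it is precisely what pins down the weight of $S^1_\nu$ together with the transverse Delzant data.
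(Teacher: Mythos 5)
Your proof takes a genuinely different route from the paper's. The paper's argument is a short cutting trick: slice each $M_i$ by a hyperplane parallel to $F_i$ and close enough that the piece $\widetilde{M}_i$ containing $B_i$ is a compact Delzant space whose polytope $\widetilde{\Delta}_i$ lies inside the common open set where $\Delta_1$ and $\Delta_2$ agree; then $\widetilde{\Delta}_1 = \widetilde{\Delta}_2$, Delzant's uniqueness theorem gives a global $G$-equivariant symplectomorphism $\widetilde{M}_1 \to \widetilde{M}_2$ intertwining the moment maps, and since cutting is local one restricts this to tubular neighborhoods of $B_i$. You instead apply Delzant to the lower-dimensional toric manifolds $B_i$ to get $\phi : B_1 \to B_2$, then match equivariant symplectic normal bundles and run an equivariant Weinstein/Moser argument to thicken $\phi$. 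Both work, but the paper's route avoids the bundle bookkeeping and Moser step entirely, instead offloading all the uniqueness to a single application of Delzant on full-dimensional manifolds; the cost is having to choose the cutting hyperplane carefully so that $\widetilde{\Delta}_i$ is Delzant rather than merely rational (which the paper flags in a footnote). Your argument is longer but more self-contained in spirit and makes explicit exactly what local invariant (the germ of the equivariant normal bundle data near $F$) controls the neighborhood.

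Two small cautions on your write-up. First, the step "these two equivariant models are canonically identified, so the normal bundles match under $\phi$" deserves an extra sentence: one needs $\phi$ to pull back the equivariant first Chern class of the normal bundle of $B_2$ to that of $B_1$, and the cleanest justification is again via the local toric model (the normal bundle's equivariant Euler class is read off from how the facets of $\Delta_i$ adjacent to $F$ meet $F$, and this data coincides since $\Delta_1 \cap \cU = \Delta_2 \cap \cU$). Second, the sentence "$\fg$ has no nonzero $G$-invariants" is backwards for a torus, where the coadjoint action is trivial and \emph{every} element of $\fg^*$ is invariant; what you want to say is that moment maps for a torus action are unique up to an additive constant in $\fg^*$, and that constant is pinned down by the agreement on $B_1$. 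Your conclusion is right, but the stated reason is the opposite of the truth.
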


\begin{proof}
Let $\cU$ be an open set containing $F_1=F_2$ such that
$\cU \cap \Delta_1 = \cU \cap \Delta_2$.

Perform symplectic cutting~\cite{le:cuts} on $M_1$ and $M_2$
by slicing $\Delta_i$ along a hyperplane
parallel to $F_i$ such that:
\begin{itemize} 
\item
the moment polytope $\widetilde{\Delta}_i$ containing $F_i$
is in the open set $\cU$;
\item
the hyperplane is close enough to $F_i$ to
guarantee that $\widetilde{\Delta}_i$ satisfies the 
Delzant conditions.
(For generic rational
hyperplanes, the third of the Delzant conditions fails inasmuch as
we only get a $\QQ^n$-basis, thus we need to consider orbifolds.)
\end{itemize}
Then $\widetilde{\Delta}_1=\widetilde{\Delta}_2$.
By Delzant's theorem, the corresponding cut spaces
$\widetilde{M}_1$ and $\widetilde{M}_2$
are $G$-equivariantly symplectomorphic, the symplectomorphism
pulling back one moment map to the other.
Restricting the previous symplectomorphism gives us a 
$G$-equivariant symplectomorphism between $G$-equivariant 
neighborhoods $\cU_i$ of $B_i$ in $M_i$
pulling back one moment map to the other.
\end{proof}

\vspace*{1ex}

\begin{example}
\label{ex:hirzebruch}
The polytopes in Figure~\ref{fig:agreeing} represent four
different symplectic toric 4-manifolds:
twice the topologically nontrivial $S^2$-bundle over $S^2$
(these are Hirzebruch surfaces),
an $S^2 \times S^2$ blown-up at one point
and an $S^2 \times S^2$ blown-up at two points.
If any two of these polytopes are translated so that their left vertical
edges exactly superimpose, we get examples to which Lemma~\ref{lem:delzant}
applies, the relevant facets being the vertical facets on the left.

\begin{figure}[ht]
\begin{center}
\includegraphics[scale=.6]{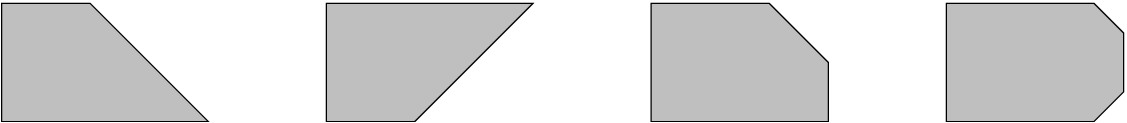}
\caption{Polytopes agreeing near the left vertical edges}
\label{fig:agreeing}
\end{center}
\end{figure}

\end{example}

\begin{example}
\label{flag}
Let $(M_1,\omega_1,\TT^2,\mu_1)$ and $(M_2,\omega_2,\TT^2,\mu_2)$
be the first two symplectic toric manifolds from Example~\ref{ex:hirzebruch}
(Hirzebruch surfaces).
Let $(B,\omega_B,\TT^2,\mu_B)$ be a symplectic $S^2$
with a hamiltonian (noneffective) $\TT^2$-action and
hamiltonian embeddings $j_i$ into $(M_i,\omega_i,\TT^2,\mu_i)$ 
as preimages of the vertical facets.
By Lemma~\ref{lem:delzant}, there exists a $\TT^2$-equivariant
symplectomorphism $\gamma : \cU_1 \to \cU_2$ between invariant
tubular neighborhoods $\cU_i$ of $j_i(B)$ extending a
symplectomorphism $j_1(B) \to j_2(B)$ such that $\gamma^* \mu_2 = \mu_1$.
The corresponding radial blow-up has the origami polytope
in Figure~\ref{fig:flag}.

\begin{figure}[ht]
\begin{center}
\includegraphics[scale=.17]{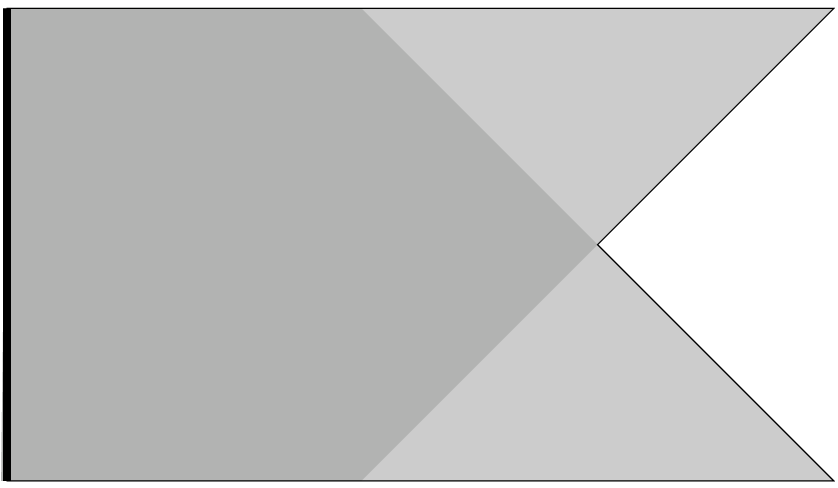}
\caption{Origami polytope for a radial blow-up of two Hirzebruch surfaces}
\label{fig:flag}
\end{center}
\end{figure}

Different shades of gray distinguish regions where
each point represents one orbit (lighter) or two orbits (darker),
which results from the superimposition of two Hirzebruch polytopes.

This example may be considerably generalized;
see Section~\ref{ss:classification}.
\end{example}

\begin{example}\label{exotic}
Dropping the origami hypothesis gives us much less rigid moment map images.
For instance, take any symplectic toric manifold $(M',\omega',G,\mu')$,
e.g.\ $S^2 \times S^2$, and use a regular closed
curve inside the moment image to scoop out a $G$-invariant
open subset corresponding to the region inside the curve.
Let $f$ be a defining function for the curve such that $f$ is
positive on the exterior.
Consider the manifold
\[
   M = \left\{(p,x)\in M' \times \RR \, \mid \, x^2=f(p)\right\} \ .
\]
This is naturally a {\em toric} folded symplectic manifold.
However, the null foliation on $Z$ is not fibrating: at 
points where the slope of the curve is irrational,
the corresponding leaf is not compact.

For instance, when $M' = S^2 \times S^2$ and we
take some closed curve, the moment map image is as on the left
of Figure~\ref{fig:exotic}.
If instead we discard the region corresponding to the outside
of the curve (by choosing a function $f$ positive
on the interior of the curve), the moment map image is as on the right.

\begin{figure}[ht]
\begin{center}
\qquad
\includegraphics[scale=.23]{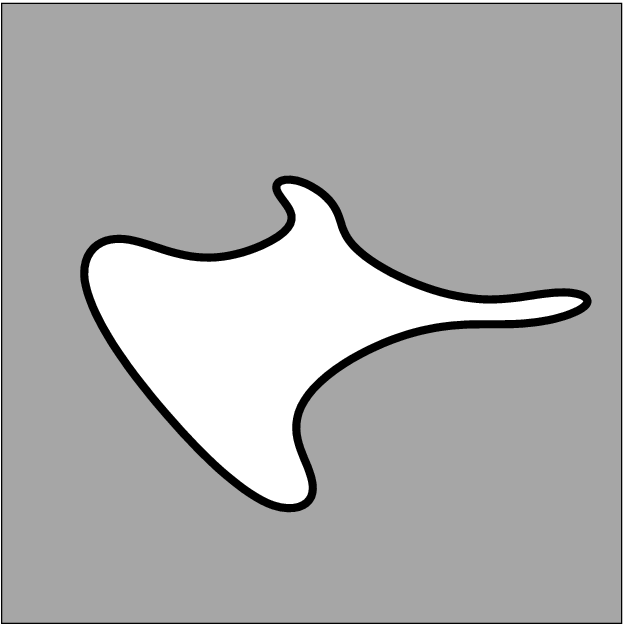}
\qquad \qquad \qquad
\includegraphics[scale=.23]{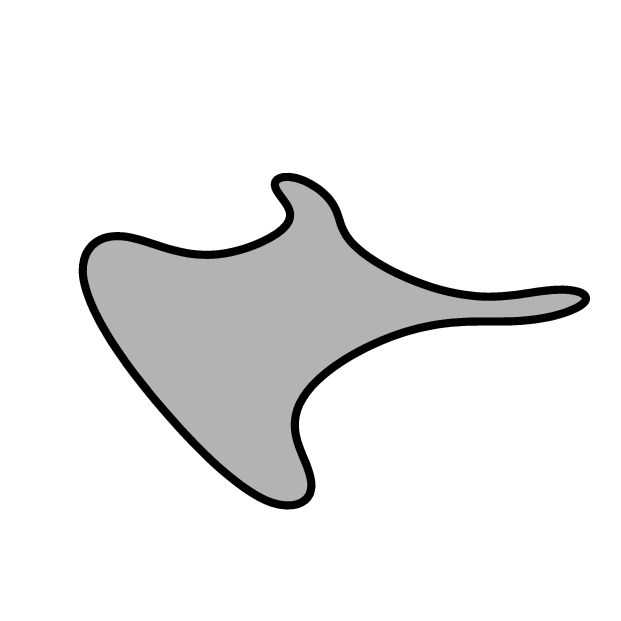}
\caption{Folded (non-origami) moment map images}
\label{fig:exotic}
\end{center}
\end{figure}

\end{example}


\subsection{Classification of toric origami manifolds}
\label{ss:classification}

Let $(M,\omega,G,\mu)$ be a toric origami manifold.
By Theorem~\ref{thm:convexity}, the image
$\mu (M)$ is the superimposition of the Delzant polytopes
corresponding to the connected components of its symplectic cut space.
Moreover, $\mu$ maps the folding hypersurface to certain facets
possibly shared by two polytopes which agree near those facets.

Conversely, we will see that, given a {\em template}
of an allowable superimposition of Delzant polytopes,
we can construct a toric origami manifold
whose moment image is that superimposition.
Moreover, such templates classify toric origami manifolds.

\begin{definition}\label{def:template}
An $n$-dimensional \textit{origami template} is a pair $(\cP,\cF)$,
where $\cP$ is a (nonempty) finite collection
of $n$-dimensional Delzant polytopes and
$\cF$ is a collection of facets and pairs of facets of polytopes in $\cP$
satisfying the following properties:
\begin{itemize}
\item[(a)]
for each pair $\left\{F_1,F_2\right\} \in \cF$, the corresponding
polytopes in $\cP$ agree near those facets;
\item[(b)]
if a facet $F$ occurs in $\cF$, either by itself or as a member
of a pair, then neither $F$ nor any of its neighboring facets
occur elsewhere in $\cF$;
\item[(c)]
the topological space constructed from the disjoint union
$\sqcup\Delta_i$, $\Delta_i\in\cP$, by identifying facet pairs
in $\cF$ is connected.
\end{itemize}
\end{definition}

\begin{theorem}\label{thm:classification}
Toric origami manifolds are classified by origami templates
up to equivariant symplectomorphism preserving the moment maps.
More specifically, at the level of symplectomorphism classes
(on the left hand side), there is a one-to-one correpondence
\[
\begin{array}{rll}
   \left\{\text{$2n$-diml toric origami manifolds}\right\}
   & \longrightarrow
   & \left\{\text{$n$-diml origami templates}\right\}\\
   (M^{2n},\omega,\TT^n,\mu)&\longmapsto&\mu(M).
\end{array}
\]
\end{theorem}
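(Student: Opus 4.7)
The plan is to establish the correspondence in both directions and then show they are mutually inverse, leaning heavily on the results already set up in Sections 2 and 3.

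For the forward map $M \mapsto (\cP,\cF)$, I would start with a toric origami manifold $(M,\omega,\TT^n,\mu)$ and apply the equivariant cutting construction from Section~\ref{ss:cutting} (in the form used in the proof of Theorem~\ref{thm:convexity}) to produce the symplectic cut pieces $(M_i,\omega_i,\TT^n,\mu_i)$. Each $M_i$ is a symplectic toric manifold, so by Delzant's theorem $\Delta_i := \mu_i(M_i)$ is a Delzant polytope; I would let $\cP = \{\Delta_i\}$. For $\cF$, I would take each coorientable component $Z'$ of $Z$ and assign the pair $\{F_1,F_2\}$ of facets it produces in the two neighboring cut pieces; each non-coorientable component $Z'$ contributes a single facet $F$ in the (unique) neighboring cut piece produced via the $\ZZ_2$-quotient of Definition~\ref{defn:cut_space}. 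Condition~(a) is exactly Corollary~\ref{cor:agreeing}. Condition~(c) is immediate from connectedness of $M$ together with the fact that the identification of facets recovers $\mu(M)$ as a connected space. Condition~(b) is the only nontrivial check: a facet neighbor of $F = \mu(Z')$ in a cut piece $M_i$ corresponds to a $G$-invariant symplectic submanifold meeting $B_i$, and the local Moser model $Z' \times (-\e,\e)$ around $Z'$ forces any such neighboring facet to be a plain boundary facet of $M_i$ rather than another fold image.

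For the inverse map, given an origami template $(\cP,\cF)$, I would first apply Delzant's theorem to each $\Delta_i \in \cP$ to obtain a symplectic toric manifold $(M_i,\omega_i,\TT^n,\mu_i)$. For each pair $\{F_1,F_2\} \in \cF$, compatibility~(a) and Lemma~\ref{lem:delzant} supply $\TT^n$-invariant tubular neighborhoods $\cU_1,\cU_2$ of $B_1=\mu_1^{-1}(F_1)$ and $B_2=\mu_2^{-1}(F_2)$ together with an equivariant symplectomorphism $\gamma_1 : \cU_1 \to \cU_2$ with $\gamma_1^*\mu_2 = \mu_1$ and $\gamma_1(B_1)=B_2$. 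For each unpaired facet $F$ in $\cF$, I would take $\gamma$ to be the identity on an invariant tubular neighborhood of $\mu^{-1}(F)$. Condition~(b) ensures that the domains of these $\gamma$'s are disjoint from one another, so the data assemble into a single model involution on a neighborhood of the union $B$ of all selected preimages in the disjoint union $\sqcup M_i$. Radial blow-up from Section~\ref{ss:blowup} then produces an origami manifold $(\widetilde M,\widetilde\omega)$. The $\TT^n$-action and the moment map descend from the equivariant and moment-preserving nature of the gluing data, giving a toric origami manifold whose moment image is the superimposition encoded by $(\cP,\cF)$.

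To see these maps are mutually inverse, I would apply Corollary~\ref{coroll:cutting_blowup} (cutting a radial blow-up recovers the original symplectic pieces) to conclude that starting with $(\cP,\cF)$, building $\widetilde M$, and then cutting reproduces the $M_i$'s with their moment polytopes intact, hence the same template. In the other direction, Proposition~\ref{prop:blowingup_unfold} (and its corollary for the non-orientable case) shows that starting with $M$, cutting to obtain the $M_i$'s, and then radially blowing up through the natural $\gamma$ of Remark~\ref{rmk:involution} gives back an origami manifold equivalent to $M$. For uniqueness up to equivariant equivalence, Delzant's theorem pins down each $M_i$ uniquely from $\Delta_i$, and the germ uniqueness of blow-up models (stated just after the definition of blow-up model in Section~\ref{ss:blowup}), combined with Lemma~\ref{lem:delzant}, shows the radial blow-up is independent of the auxiliary choices of $\gamma$ up to $\TT^n$-equivariant, moment-map-preserving diffeomorphism.

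The main obstacle will be this last uniqueness step: showing that two choices of the equivariant symplectomorphism $\gamma_1$ of tubular neighborhoods, or two choices of Moser/blow-up model, produce equivalent toric origami manifolds rather than merely diffeomorphic ones. The strategy is to note that any two such $\gamma_1$'s differ by a $\TT^n$-equivariant symplectomorphism of a neighborhood of $B_1$ fixing $B_1$ and the moment map, and to use an equivariant Moser argument to interpolate; combined with the germ uniqueness of blow-up models, this transports one construction to the other by a global $\TT^n$-equivariant diffeomorphism intertwining the moment maps, which is precisely the notion of equivalence required.
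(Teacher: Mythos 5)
Your proposal follows essentially the same path as the paper's proof: cut and invoke Delzant's theorem (via Theorem~\ref{thm:convexity} and Corollary~\ref{cor:agreeing}) for the forward map, use Delzant's construction plus Lemma~\ref{lem:delzant} and radial blow-up for the inverse, and appeal to the cutting/blow-up inverse results (Proposition~\ref{prop:blowingup_unfold}, Corollaries~\ref{coroll:cutting_blowup} and~\ref{coroll:blowingup_unfold}) for well-definedness and uniqueness. You fill in more detail than the paper (notably verifying template condition~(b), which the paper leaves implicit, and spelling out the independence-of-choices argument), but the architecture and the key lemmas invoked are the same.
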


\begin{proof}
To build a toric origami manifold from a template $(\cP,\cF)$,
take the Delzant spaces corresponding to the
Delzant polytopes in $\cP$ and radially blow up the inverse
images of the facets occuring in sets in $\cF$:
for pairs $\left\{F_1,F_2\right\} \in \cF$
the model involution $\gamma$ uses the symplectomorphism from 
Lemma~\ref{lem:delzant}; for single faces $F \in \cF$,
the map $\gamma$ must be the identity.
The uniqueness part follows from an equivariant version of
Corollary~\ref{coroll:blowingup_unfold}.
\end{proof}

\begin{remark}
There is also a one-to-one correspondence between {\em oriented} origami 
toric manifolds (up to equivariant symplectomorphism) and
{\em oriented} origami templates.
We say that an origami template is \textit{oriented}
if the polytopes in $\cP$ come with an orientation and $\cF$ consists solely of pairs of facets which belong to polytopes with opposite orientations; see Introduction.
Indeed, for $\left\{F_1,F_2\right\}\in\cF$, with 
$F_1 \in \Delta_1$ and $F_2\in\Delta_2$, the opposite orientations on the polytopes $\Delta_1$ and $\Delta_2$ induce opposite orientations on the corresponding
components of $M \smallsetminus Z$ which piece together to a global
orientation of $M$, and vice-versa.
\end{remark}

\begin{example}
Unlike ordinary toric manifolds, toric origami manifolds may come 
from non-simply connected templates. Let $M$ be the manifold 
$S^2\times S^2$ blown up at two points, with one $S^2$ factor 
having three times the area of the other: the associated 
polytope $\Delta$ is a rectangle with two corners removed. We can 
construct an origami template $(\cP,\cF)$ where $\cP$ consists 
of four copies of $\Delta$ arranged in a square and $\cF$ is 
four pairs of edges coming from the blowups.
The result is shown in Figure~\ref{fig:hole_polytope}.
Note that the associated origami manifold is also not simply connected.

\vspace*{5ex}

\begin{figure}[ht]
\begin{center}
\includegraphics[scale=.25]{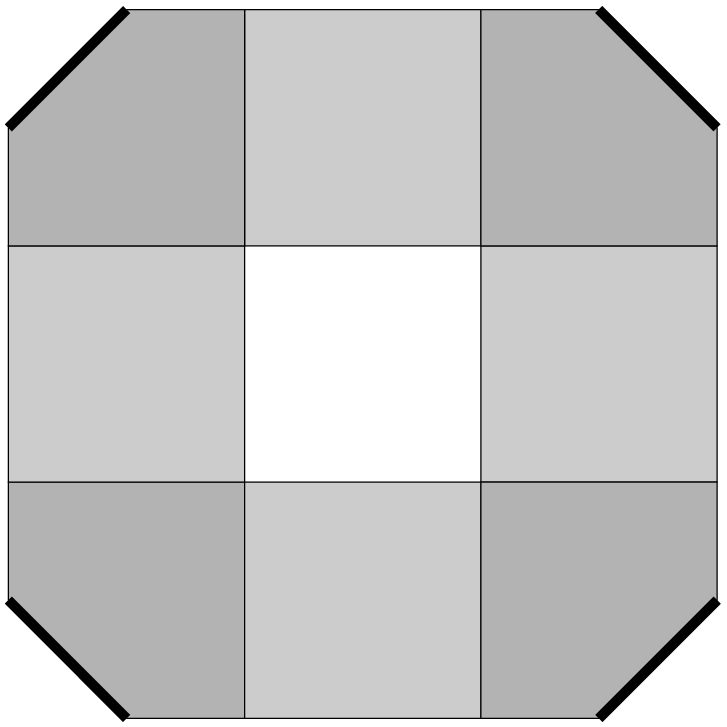}
\caption{Non-simply-connected toric origami template}
\label{fig:hole_polytope}
\end{center}
\end{figure}

\end{example}

\begin{example}
We can form higher-dimensional analogues of the previous example 
which fail to be $k$-connected for $k\geq 2$. In the case $k=2$, 
for instance, let $\Delta'$ be the polytope associated to $M\times S^2$, 
and construct an origami template $(\cP',\cF')$ just as before: 
this gives the three-dimensional figures on the left and right
of Figure~\ref{fig:cubeincube}.
We now superimpose these two solids along the dark shaded 
facets (the bottom facets of the top copies of $\Delta'$), giving us 
a ninth pair of facets and the desired non-$2$-connected template.

\vspace*{5ex}

\begin{figure}[ht]
\begin{center}
\includegraphics[scale=.33]{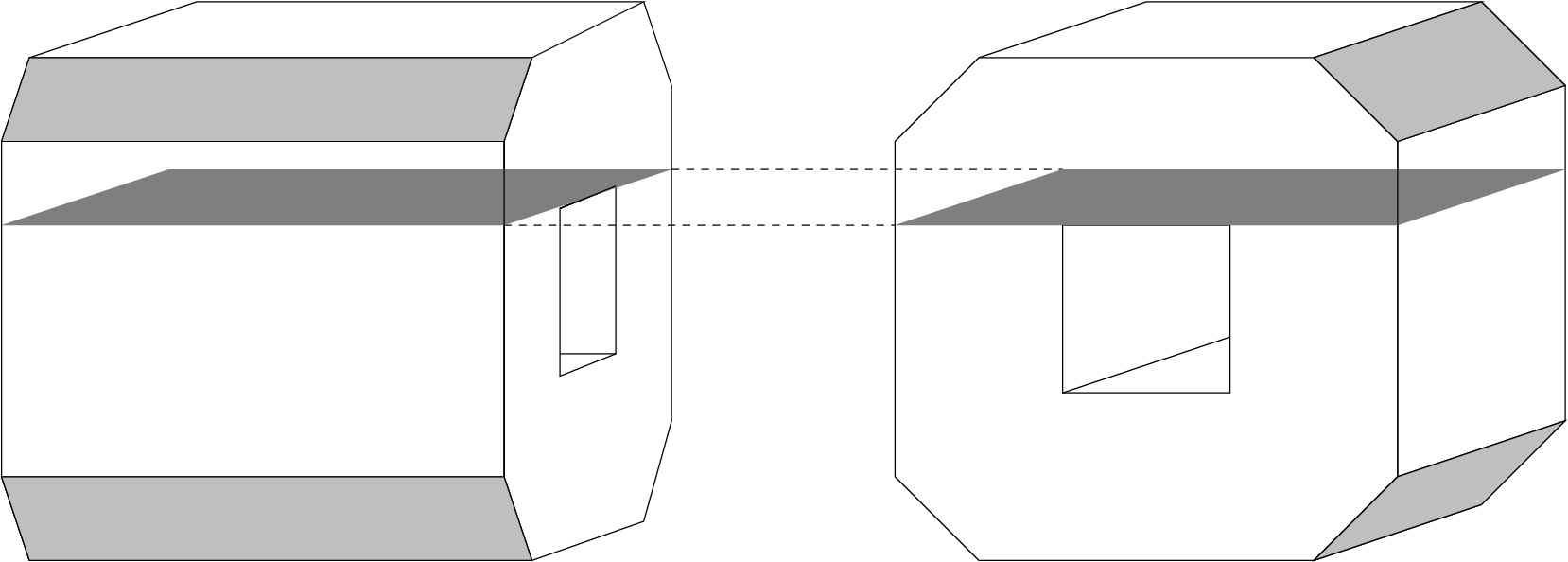}
\caption{Non-2-connected toric origami template}
\label{fig:cubeincube}
\end{center}
\end{figure}

Note that even though the moment map image has interesting $\pi_2$, the template (thought of as the polytopes glued along facets) has trivial $H_2$ and $\pi_2$. This is indeed a general feature of origami templates, see Remark~\ref{nonorigami}.
\end{example}

\begin{example}
Although the facets of $\cF$ are necessarily paired if the 
origami manifold is oriented, the converse fails.
As shown in Figure~\ref{fig:nonorientable},
one can form a template of three polytopes, each 
corresponding to an $S^2\times S^2$ blown up at two points, 
and three paired facets. Since each fold flips orientation, 
the resulting topological space is nonorientable.

\begin{figure}[ht]
\begin{center}
\includegraphics[scale=.27]{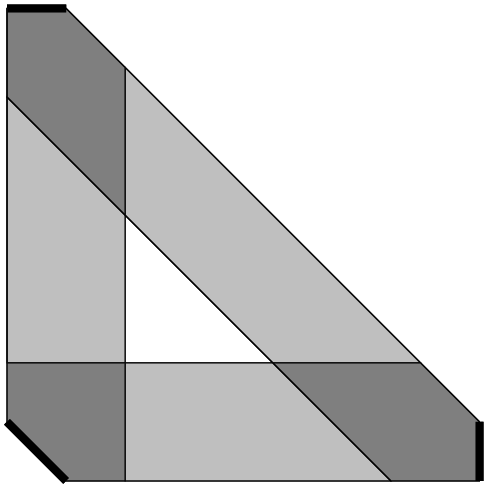}
\caption{Non-orientable toric origami template with co-orientable folds}
\label{fig:nonorientable}
\end{center}
\end{figure}

\end{example}

\begin{example}
Recall that the polytope associated to $\CC \PP^2$ is a triangle 
(shown on the right of Figure~\ref{fig:three_triangles}).
The sphere $S^4$ (shown left) is the orientable 
toric origami manifold whose template is two copies of this 
triangle glued along one edge. Similarly, $\RR \PP^4$ (shown center) 
is the nonorientable manifold whose template is a single copy of 
the triangle with a single folded edge. This exhibits $S^4$ as a 
double cover of $\RR \PP^4$ at the level of templates.

\vspace*{4ex}

\begin{figure}[ht]
\begin{center}
   \psfrag{fig1}{$S^4$}
   \psfrag{fig2}{$\RR\PP^4$}
   \psfrag{fig3}{$\CC\PP^2$}
\includegraphics[scale=.35]{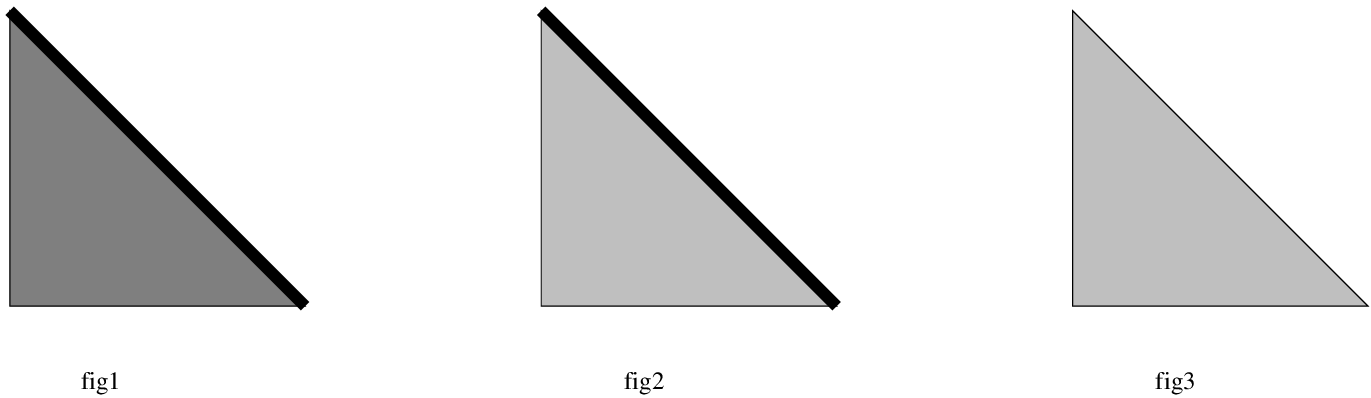}
\caption{Three origami templates with the same origami polytope}
\label{fig:three_triangles}
\end{center}
\end{figure}

\end{example}

\begin{example}
We can classify all two-dimensional toric origami manifolds by 
classifying one-dimensional templates
(Figure~\ref{fig:zigzag_spheres}--~\ref{fig:zigzag_tori}).
These are disjoint unions of $n$ segments (1-dimensional Delzant polytopes)
connected at vertices  (the facets of those polytopes) with zero angle:
internal vertices and endpoints marked with bullets represent folds.
Each segment (resp.\ marking) gives a component 
of $M\smallsetminus Z$ (resp.\ $Z$), while each unmarked endpoint 
corresponds to a fixed point.
There are four families (Instead of 
drawing segments superimposed, we open up angles slightly to show 
the number of components. All pictures ignore segment 
lengths which account for continuous parameters of symplectic area 
in components of $M \smallsetminus Z$.):

\begin{itemize}

\item
Templates with two unmarked endpoints give manifolds diffeomorphic 
to $S^2$: they have two fixed points and $n-1$ components of $Z$
(Figure~\ref{fig:zigzag_spheres}).

\vspace*{3ex}

\begin{figure}[ht]
\begin{center}
\includegraphics[scale=.35]{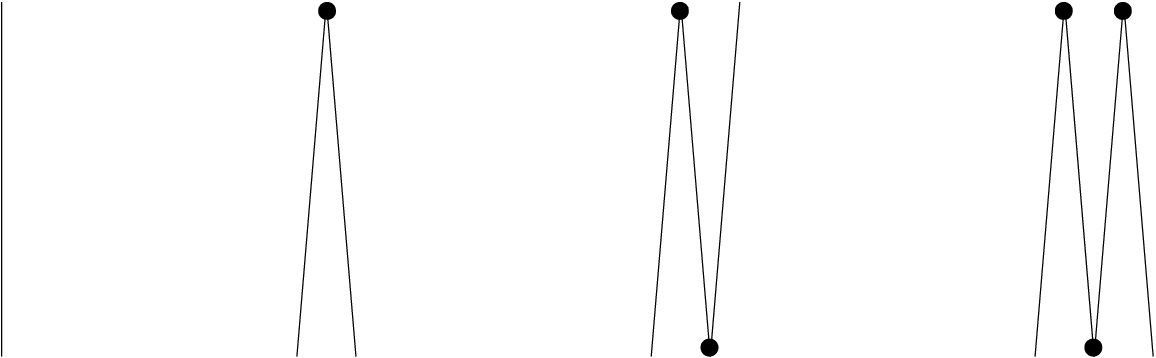}
\caption{Toric origami 2-spheres}
\label{fig:zigzag_spheres}
\end{center}
\end{figure}

\item
Templates with one marked and one unmarked endpoint give manifolds 
diffeomorphic to $\RR \PP^2$: they have one fixed point and $n$ 
components of $Z$ (Figure~\ref{fig:zigzag_rptwos}).

\vspace*{3ex}

\begin{figure}[ht]
\begin{center}
\includegraphics[scale=.35]{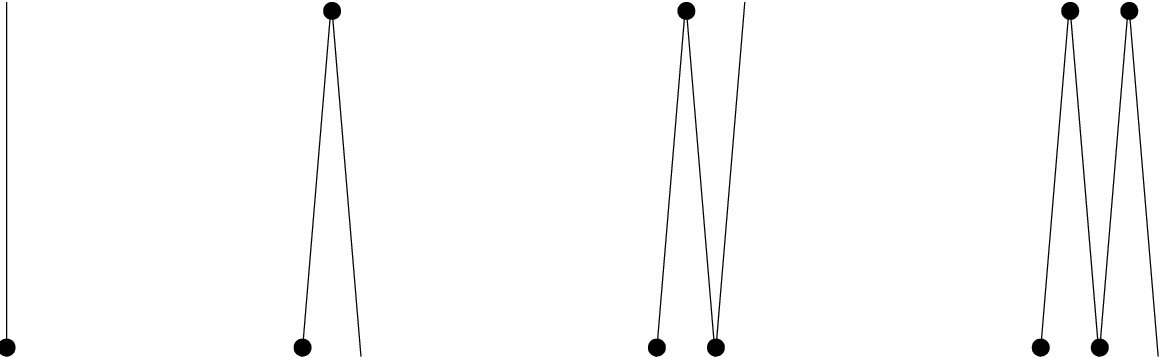}
\caption{Toric origami real projective planes}
\label{fig:zigzag_rptwos}
\end{center}
\end{figure}

\item
Templates with two marked endpoints give manifolds diffeomorphic 
to the Klein bottle: they have no fixed points and $n+1$ components of $Z$
(Figure~\ref{fig:zigzag_klein}).

\vspace*{3ex}

\begin{figure}[ht]
\begin{center}
\includegraphics[scale=.35]{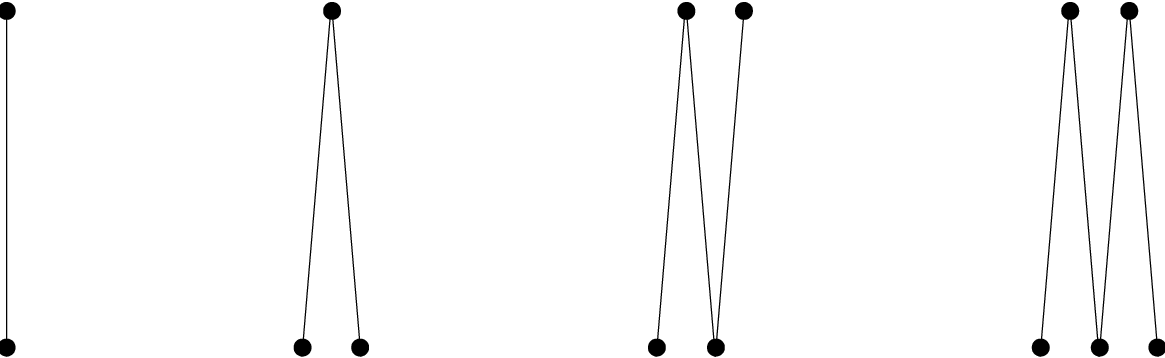}
\caption{Toric origami Klein bottles}
\label{fig:zigzag_klein}
\end{center}
\end{figure}

\item
Templates with no endpoints give manifolds diffeomorphic 
to $\TT^2$: they have no fixed points and
an even number $n$ of components of $Z$ (Figure~\ref{fig:zigzag_tori}).

\vspace*{3ex}

\begin{figure}[ht]
\begin{center}
\includegraphics[scale=.35]{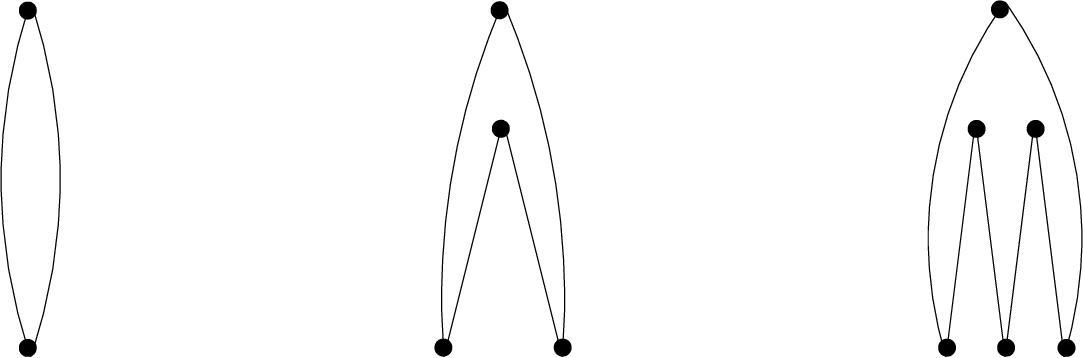}
\caption{Toric origami 2-tori}
\label{fig:zigzag_tori}
\end{center}
\end{figure}
\end{itemize}
\end{example}

\begin{remark}\label{nonorigami}
As illustrated by Example~\ref{exotic}, it is not possible to
classify toric folded symplectic manifolds by combinatorial moment
data as is the case for toric origami manifolds. Such a
classification must be more intricate for the general folded case, and
in~\cite{lee:thesis} C. Lee gives a partial result that sheds some light on
the type of classification that might be possible:

A \emph{toric folded symplectic manifold} $(M,\omega,\TT,\mu)$ is a compact
connected folded symplectic manifold $(M^{2n},\omega)$ endowed with an
effective hamiltonian action of a half-dimensional torus $\TT^n$ and a
corresponding moment map $\mu$. The \emph{orbital moment map} is the
map on the orbit space $M/\TT$ induced by the moment map. Two toric
folded symplectic 4-manifolds $(M,\omega,\TT,\mu)$ and
$(M',\omega',\TT,\mu')$ are symplectomorphic (as folded symplectic manifolds) if $H^2(M/\TT,\ZZ)=0$ and there exists a diffeomorphism between orbit spaces preserving orbital moment maps.

When $(M,\omega,\TT,\mu)$ is a toric origami manifold, $M/\TT$ can be
realized as the topological space obtained by identifying the 
polytopes of its origami template along the common facets (see point (c) in Definition~\ref{def:template}). This space has the same homotopy type as the graph obtained by replacing each polytope by a point and each
``glued'' double facet by an edge between the points corresponding to 
the polytopes that the facet belongs to. Therefore, $H^2(M/\TT,\ZZ)=0$.
The existence of a diffeomorphism between orbit spaces implies that
$(M',\omega',\TT,\mu')$ is an origami manifold as well, and that its
origami template is the same as that of $(M,\omega,\TT,\mu)$, which
makes the manifolds symplectomorphic by Theorem~\ref{thm:classification}.
\end{remark}


\section{Cobordism}
\label{cobordism}

We will now prove the following conjecture of Yael Karshon's
stating that an oriented origami manifold is \textit{symplectically}
cobordant to its symplectic cut space.
By \textit{symplectic} cobordism we mean,
following~\cite{gu-gi-ka:cobordisms},
a cobordism manifold endowed with a closed 2-form which
restricts to the origami and symplectic forms on its boundary.

\begin{theorem}
\label{thm:karshon}
Let $(M,\omega)$ be an oriented origami manifold
and let $(M_0^\pm,\omega_0^\pm)$ be its symplectic cut pieces.

Then there is a manifold $W$ equipped with a closed
2-form $\Omega$ such that the boundary of $W$
equipped with the restriction of $\Omega$ is symplectomorphic to
\[
   (M,\omega) \sqcup (M_0^+,\omega_0^+) \sqcup (M_0^-,\omega_0^-) \ .
\]
Moreover, in the presence of a (hamiltonian) compact group action,
this cobordism can be made equivariant (or hamiltonian).
\end{theorem}

\begin{proof}
Choose a $S^1$-action making the null fibration into a principal
fibration, $S^1\hookrightarrow Z\stackrel{\pi}{\to} B$.
Let $\LL\stackrel{\pi_\LL}{\to}B$ be the associated hermitian line bundle
$Z\times_{S^1}\CC$ for the standard multiplication action of $S^1$ on $\CC$.
Let $r:\LL\to\RR$ given by
$r(\ell)=\sqrt{\left\langle \ell,\ell\right\rangle_{\pi_\LL(\ell)}}$
be the hermitian length and let
$i_Z:Z\hookrightarrow\LL$, $i_Z (x) = [x,1]$.

For $\varepsilon$ small enough, let $\varphi:Z\times(-\varepsilon,\varepsilon)\to\cU$ be a Moser model for $M$ (see Definition \ref{def:mosermodel}). Take a small enough $\delta$ and choose a non-decreasing smooth function $g:\RR^+_0\to\RR$ such that $g(s)=s$ for $s<\delta^2$ and $g(s)=1$ for $s>4\delta^2$. 

The set 
$$\left\{(r,t)\in\RR_0^+\times(-\varepsilon,\varepsilon)\mid g(t^2)-\frac{\delta^2}{4}\leq r^2\leq 1\right\}$$
is depicted in Figure~\ref{fig:cobordism_aux}, where $R:=\sqrt{1-\frac{\delta^2}{4}}$. 

\begin{figure}[ht]
\begin{center}
  \psfrag{r}{$r$}
  \psfrag{r=1}{$r=1$}
  \psfrag{r=R}{$r=R$}
  \psfrag{t}{$t$}
  \psfrag{d}{$\delta$}
  \psfrag{-2d}{$-2\delta$}
  \psfrag{2d}{$2\delta$}
  \psfrag{-d}{$-\delta$}
  \psfrag{d/2}{$\delta/2$}
  \psfrag{-d/2}{$-\delta/2$}
  \psfrag{e}{$\varepsilon$}
  \psfrag{-e}{$-\varepsilon$}\includegraphics[scale=.6]{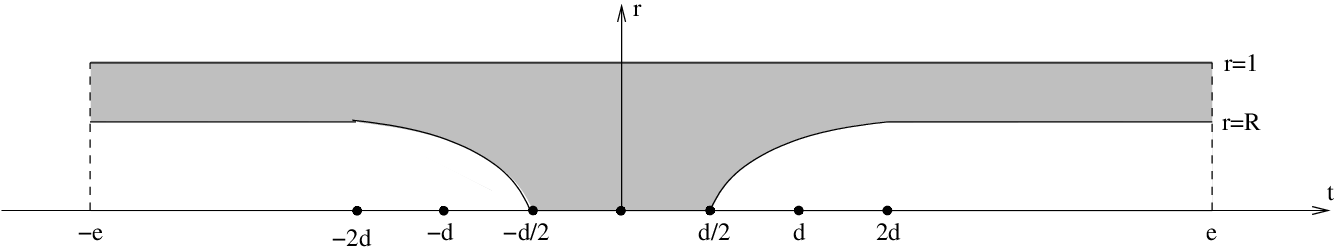}
\caption{The shaded area is the set $\left\{(r,t)\in\RR_0^+\times(-\varepsilon,\varepsilon)\mid g(t^2)-\frac{\delta^2}{4}\leq r^2\leq 1\right\}$}
\label{fig:cobordism_aux}
\end{center}
\end{figure}

Then the set
\[
   W_\varepsilon :=
   \left\{(\ell,t)\in\LL\times(-\varepsilon,\varepsilon)
   \mid g(t^2)-\frac{\delta^2}{4}\leq r(\ell)^2\leq 1\right\}
\]
is the manifold with boundary sketched in
Figure~\ref{figure:cobordism} with $B$ represented by a point. 

\begin{figure}[ht]
\begin{center}
  \psfrag{M}{$C$}
  \psfrag{M-}{$C^-$}
  \psfrag{M+}{$C^+$}
  \psfrag{r=1}{$r=1$}
  \psfrag{r=sqrt(1-d2/4)}{$r=R$}
  \psfrag{t}{$t$}
  \psfrag{d}{$\delta$}
  \psfrag{-2d}{$-2\delta$}
  \psfrag{2d}{$2\delta$}
  \psfrag{-d}{$-\delta$}
  \psfrag{d/2}{$\delta/2$}
  \psfrag{-d/2}{$-\delta/2$}
  \psfrag{t=e}{$t=\varepsilon$}
  \psfrag{t=-e}{$t=-\varepsilon$}
\includegraphics[scale=.6]{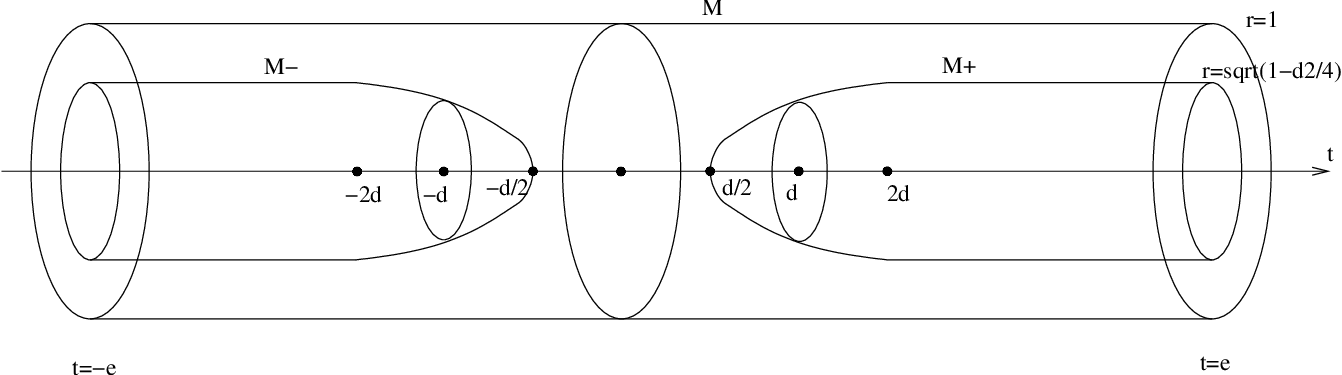}
\caption{The key portion $W_\varepsilon$ of the cobordism manifold $W$}
\label{figure:cobordism}
\end{center}
\end{figure}

The boundary of $W_\varepsilon$ is made up of the following three pieces:
\begin{eqnarray*}
C&:=&\left\{(\ell,t)\in\LL\times(-\varepsilon,\varepsilon)\mid r(\ell)=1\right\}      \\
C^+
& := &
\left\{(\ell,t)\in\LL\times[\textstyle{\frac{\delta}{2}},\varepsilon)
\mid g(t^2)=r(\ell)^2+\frac{\delta^2}{4}\right\}    \\
C^-
& := &
\left\{(\ell,t)\in\LL\times(-\varepsilon,-\textstyle{\frac{\delta}{2}}]
\mid g(t^2)=r(\ell)^2+\frac{\delta^2}{4}\right\} 
\end{eqnarray*}

The set $C$ is the image of $\cU$ under the diffeomorphism 
\begin{equation}\label{eq:diffeo for C}
\cU\stackrel{\varphi^{-1}}{\longrightarrow}Z\times(-\varepsilon,\varepsilon)\stackrel{i_Z\times\text{id}}{\longrightarrow}\LL\times(-\varepsilon,\varepsilon).
\end{equation}

By the tubular neighborhood theorem,
the set $C^+$ is diffeomorphic to a neighborhood of $B$ in the symplectic cut space $M_0^+$ (and similarly for $C^-$ and $M_0^-$); indeed the normal bundle of $B$ in $M_0^+$ is also $\LL$.

We can now extend the cobordism $W_\varepsilon$ between $C$ and $C^+\sqcup C^-$ to a global cobordism between $M$ and $M_0^+\sqcup M_0^-$ (modulo diffeomorphisms). Let $M_{2\delta}:=\varphi^{-1}(Z\times(-2\delta,2\delta))$ be a narrower tubular neighborhood of $Z$ in $M$. We form the global cobordism $W$ by gluing $W_\varepsilon$ to  $\left(M\smallsetminus M_{2\delta}\right)\times\left[R,1\right]$ using the restriction to $\cU\smallsetminus M_{2\delta}$ of the diffeomorphism (\ref{eq:diffeo for C}) and using the identity map on $\left[R,1\right]$. Note that any subset of $\cU$ not containing $Z$, for example $\cU\smallsetminus M_{2\delta}$, can be viewed also as a subset of $M_0^+\sqcup M_0^-$ via $j^+\sqcup j^-$ (for $j^+$ and $j^-$ see proof of Proposition \ref{pro:cutting}).

We will next exhibit a closed 2-form on $W$ restricting to
the given origami and symplectic forms on the boundary.

Let $\alpha$ be the $S^1$-connection form on $Z$ from the Moser model.
Denote by $B_0$ the zero-section in $\LL$.
Since $\LL \smallsetminus B_0 = Z\times_{S^1}\CC^* \approx Z\times\RR^+$,
we can extend $\alpha$ to $\LL \smallsetminus B_0$ by pull-back
via the projection $Z\times\RR^+\to Z$.
Although $\alpha$ is not defined over $B_0$,
the product $r^2\alpha$ is a smooth 1-form on $\LL$:
On open sets where the fibration $Z\stackrel{\pi}{\rightarrow}B$ is trivial,
we have
$\alpha=d\theta+\pi^*\xi$ for some $\xi\in\Omega^1(B)$, and $r^2\alpha=r^2d\theta+r^2\pi^*\xi$ is well defined on the whole $\LL$ because $r^2 d\theta$ is no longer singular.

Now let $h(r,t)$ be a smooth function defined on the shaded region
in Figure~\ref{fig:cobordism_aux} which is even in $t$
(i.e., $h(r,t)=h(r,-t)$) and in particular equal to $t^2$ on the
region $|t|>2\delta$ and on the line $r=1$, and whose restriction
$\tilde{h}$ to the curve $g(t^2)=r^2+\frac{\delta^2}{4}$ is strictly
increasing as a function of $t$ and equal to
$r^2$ for $|t|<\delta$.
An exercise in 2-D interpolation shows such a function exists;
note that, by definition of $g$, we have that
$r^2=t^2-\frac{\delta^2}{4}$ on the latter curve when $|t|<\delta$.

We define the following closed 2-form on $W_\varepsilon$:
$$\Omega=\pi^*\omega_B+d(h\alpha).$$
Because the restriction $\tilde{h}$ is unique up to pre-composition
with a diffeomorphism, the restrictions of $\Omega$ to $C^+$ and $C^-$
are unique up to symplectomorphism.
Furthermore, by the Weinstein tubular neighborhood
theorem~\cite{we:symplecticmanifolds}, $\Omega|_{C^+}$ and $\Omega|_{C^-}$
are, up to symplectomorphism, the symplectic forms
$\omega_0^+$ and $\omega_0^-$ on corresponding open subsets
of $M_0^+$ and $M_0^-$.
On the other hand, restricting to $C$, we have
$\Omega=\pi^*\omega_B+d(t^2\alpha)$ which under the map in
(\ref{eq:diffeo for C}) is the origami form on $\cU$.

In particular, on the region $|t|>2\delta$ in Figure~\ref{figure:cobordism}, the folded symplectic form on $C$, the symplectic forms on $C^+$ and $C^-$, and the cobording form $\Omega$ on $W_\varepsilon$ are all identical and equal to $\pi^*\omega_B+d t^2\alpha$. Moreover, the ``up to symplectomorphism'' part of these statements involves symplectomorphisms which are the identity on the region $|t|>2\delta$, and hence $\Omega$ can be extended to a cobording 2-form on all of $W$ by letting it be the constant (constant on $r$) cobording 2-form on $|t|>\varepsilon$.

Suppose now that $M$ is equipped with an action of a compact Lie group
$G$ which preserves $\omega$.
One then gets a $G$-action on $B$, $Z$ and $\LL$, and
by averaging we can choose the $\alpha$ in the Moser model to be $G$-invariant.
Thus all the data involved in the definition of the cobording 2-form
$\Omega$ above are $G$-invariant and hence $\Omega$ itself is $G$-invariant.
Moreover, if $\omega$ is $G$-hamiltonian, the form $\omega_B$ is as well,
and since $\alpha$ is $G$-invariant the 2-form $d(h\alpha)$
is $G$-hamiltonian with moment map
\[
   v\in\fg \longmapsto \phi^v=\imath_{v^\#}(h\alpha)
\]
where $v^\#$ is the vector field on $\LL\times(-\varepsilon,\varepsilon)$
associated with the action of $G$ on this space.
Thus $W_\varepsilon$ with the form $\Omega$ is a hamiltonian cobordism, and it extends to a hamiltonian cobordism $W$.
\end{proof}

\begin{remark}
If $M$ and its cut pieces are pre-quantizable one has an isomorphism of virtual vector spaces (and in the presence of group actions, virtual representation spaces)
$$\mathcal{Q}(M)=\mathcal{Q}(M_0^+)-\mathcal{Q}(M_0^-)$$
where $\mathcal{Q}$ is the spin-$\CC$ quantization functor.
For a proof of this see~\cite{ca-gu-wo:unfolding}.
An alternative proof of this result is based on the
``quantization commutes with cobordism'' theorem
of~\cite{gu-gi-ka:cobordisms}.
\end{remark}

\begin{remark}
By the ``cobordism commutes with reduction'' theorem
of~\cite{gu-gi-ka:cobordisms}, the symplectic reduction of $M$
at a regular level (for an hamiltonian abelian Lie group action)
is cobordant to the symplectic reduction of $M_0^+\sqcup M_0^-$ at that level.
\end{remark}

\begin{remark}
In the nonorientable case, we would obtain an orbifold cobordism.
However, this is not interesting, since any manifold $M$ bounds
an orbifold, $\left( M \times [-1,1] \right) / \ZZ_2$.
\end{remark}


\section{Cohomology of Toric Origami}
\label{sec:cohomology}

In this section we assume connectedness of the folding hypersurface $Z$. This assumption is essential for the argument below: the more general case of a nonconnected folding hypersurface remains open.

Let $(M,\omega,\TT,\mu)$ be a $2n$-dimensional oriented
toric origami manifold with null fibration
$Z \stackrel{\pi}{\twoheadrightarrow} B$
and connected folding hypersurface $Z$.
Let $S^1\subset\TT$ be the circle group generating the null fibration and $f:M\rightarrow\RR$ a corresponding moment map with $f=0$ on $Z$ and $f>0$ on $M\smallsetminus Z$. Note that, on a tubular neighborhood of $Z$ given by a Moser diffeomorphism $\varphi:Z\times(-\e,\e)\rightarrow\cU$, the origami form is $\varphi^*\omega=p^*i^*\omega+d(t^2 p^*\alpha)$ and hence the moment map is $\varphi^*f(x,t)=\frac{t^2}{2}$. 

Near the folding hypersurface $Z$, the function $f$ is essentially $\frac{t^2}{2}$, and away from it, $f$ restricts to a moment map on an honest symplectic manifold $M\smallsetminus Z$, and is thus Morse-Bott. Furthermore, $Z$ is a nondegenerate critical manifold of codimension one.

Define $g:M\rightarrow\RR$ as 
$$ g = \left\{ \begin{array}{ll}
         \sqrt{f} & \mbox{on $M^+$}\\
         0 & \mbox{on $Z$}\\
        -\sqrt{f} & \mbox{on $M^-$}\end{array} \right. $$

We claim that $g$ is Morse-Bott and its critical manifolds are those of $f$, excluding $Z$. But this follows easily from the fact that, on $\cU$, we have $\varphi^*g=\frac{t}{\sqrt{2}}$, whose derivative never vanishes, while on $M\smallsetminus Z$, $dg$ vanishes if and only if $df$ vanishes:
$$dg=\left\{ \begin{array}{ll}
         df/(2\sqrt{f}) & \mbox{on $M^+$}\\
        -df/(2\sqrt{f}) & \mbox{on $M^-$}\end{array} \right.$$ 

Morse(-Bott) theory then gives us the cohomology groups $H_{\TT}^k(M)$
in terms of the cohomology groups of the critical manifolds of $g$,
$X\subset M\smallsetminus Z$:
\[
   H_{\TT}^k(M)=\left\{ \begin{array}{ll}
         0 & \mbox{if $k$ odd}\\
        \sum_{X} H_{\TT}^{k-r_X}(X) & \mbox{if $k$ even}\end{array} \right.
\]
where $r_X=\text{Ind}(X,g)$ is the index of the critical manifold $X$
with respect to the function $g$, and is $\text{Ind}(X,f)$ if $X\subset M^+$,
and $2 d-\text{Ind}(X,f)$ if $X\subset M^-$.

Similarly to the symplectic toric manifold case,
the cohomology groups are easily read from the template.

Let $(\cP,\cF)$ be the template of a $2n$-dimensional oriented
toric origami manifold $(M,\omega,\TT,\mu)$
with connected folding hypersurface $Z$.
Since the manifold is oriented, the set $\cP$ is a finite collection
of oriented $n$-dimensional Delzant polytopes
and the set $\cF$ a collection of pairs
of facets of these polytopes (see the Introduction).
We say that a polytope is \textit{positively oriented}
if its orientation matches that of ${\mathfrak g}^*$
and \textit{negatively oriented} otherwise.
Let $S^1\subset\TT$ be the circle group generating the null fibration
and let $\HH\subset\TT$ be a complementary $(n-1)$-dimensional subtorus.
Let ${\mathfrak s}^* \oplus {\mathfrak h}^*$ be the induced decomposition
of the dual of the Lie algebra of $\TT$, and let
$\mathrm{refl} (a \oplus b) = -a \oplus b$ be the
corresponding reflection along ${\mathfrak s}^*$.
Let $\cP^{u}$ be the collection of all positively oriented
polytopes in $\cP$ and of the images by $\mathrm{refl}$
of all negatively oriented polytopes in $\cP$.
The set $\cP^{u}$ can be thought of in terms of unfolding
of the moment polytope.
Let $\cF^{u}$ be the set of pairs of facets of polytopes in $\cP^{u}$
corresponding to the pairs in $\cF$.
We will call $(\cP^{u},\cF^{u})$ the
\textit{unfolded template} of $(M,\omega,\TT,\mu)$.

\begin{corollary}
\label{coroll:cohomology}
Let $(\cP^{u},\cF^{u})$ be the unfolded template of
a $2n$-dimensional oriented toric origami manifold $(M,\omega,\TT,\mu)$
with connected folding hypersurface $Z$, as defined above.
Let $X \in \fg$ generate an irrational flow.

For $k$ even, the degree-$k$ cohomology group of $M$ has dimension equal
to the number of vertices $v$ of polytopes in $\cP^{u}$ such that:
\begin{itemize}
\item[(i)]
at $v$ there are exactly $\frac{k}{2}$ primitive inward-pointing
edge vectors which point up relative to the projection along $X$, and
\item[(ii)]
$v$ does not belong to any facet in $\cF^{u}$.
\end{itemize}
All odd-degree cohomology groups of $M$ are zero.
\end{corollary}

\begin{example}
\label{ex:cohomology_S4_1}
For the toric 4-sphere $(S^4, \omega_0, \TT^2, \mu)$
from Example~\ref{ex:toric_sphere}, the set $\cP^{u}$ contains
two triangles, one being a mirror image of the other, as in
Figure~\ref{fig:unfolded_triangle}, where the dashed hypotenuses
form the unique pair of facets in the corresponding $\cF^{u}$.

\begin{figure}[ht]
\begin{center}
   \psfrag{0}{$0$}
   \psfrag{2}{$2$}
   \psfrag{X}{$X$}
\includegraphics[scale=.6]{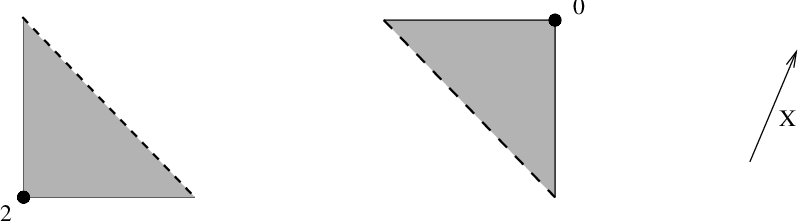}
\caption{The {\em unfolded} set $\cP^{u}$ for a toric 4-sphere}
\label{fig:unfolded_triangle}
\end{center}
\end{figure}

For the chosen direction $X$, the numbers next to the
relevant vertices count the edge vectors which point up relative to $X$.
Indeed, $\dim H^4(S^4) = \dim H^0(S^4) = 1$,
all other groups being trivial.
\end{example}

\begin{example}
\label{ex:cohomology_flag_1}
For the toric 4-manifold $(M, \omega, \TT^2, \mu)$ from Example~\ref{flag},
the set $\cP^{u}$ contains
two trapezoids, as in Figure~\ref{fig:unfolded_flag},
where the dashed vertical sides 
form the unique pair of facets in its $\cF^{u}$.

\begin{figure}[ht]
\begin{center}
   \psfrag{0}{$0$}
   \psfrag{1}{$1$}
   \psfrag{2}{$2$}
   \psfrag{X}{$X$}
\includegraphics[scale=.7]{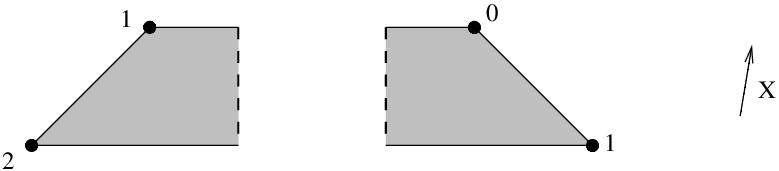}
\caption{The {\em unfolded} set $\cP^{u}$ for a toric manifold with a flag-like moment polytope}
\label{fig:unfolded_flag}
\end{center}
\end{figure}

For the chosen direction $X$, the numbers next to the
relevant vertices count the edge vectors which point up relative to $X$.
The conclusion is that
\[
   H^k(M;\ZZ)=\left\{ \begin{array}{ll}
         0 & \mbox{if $k$ odd}\\
         \ZZ & \mbox{if $k=0$ or $k=4$}\\
         \ZZ^2 & \mbox{if $k=2$}\end{array} \right.
\]
which happen to coincide with the groups for an ordinary Hirzebruch surface.
\end{example}


\subsection*{Funding}

The first author was partially supported by the
Funda\c{c}\~{a}o para a Ci\^{e}ncia e a Tecnologia (FCT/Portugal).
The third author was partially supported by FCT grant
SFRH/BD/21657/2005.


\subsection*{Acknowledgements}

We would like to express our gratitude to friends, colleagues and referees
who have helped us write and rewrite sections of this paper and/or
have given us valuable suggestions about the contents.
We would particularly like to thank in this regard Yael Karshon,
Allen Knutson, Chris Lee, Sue Tolman, Kartik Venkatram and Alan Weinstein.



\end{document}